\newtheorem{theorem}{Theorem}[section]
\newtheorem{lemma}[theorem]{Lemma}
\newtheorem{proposition}[theorem]{Proposition}
\newtheorem{corollary}[theorem]{Corollary}
\theoremstyle{definition}
\newtheorem{definition}[theorem]{Definition}
\theoremstyle{remark}
\newtheorem{remark}[theorem]{Remark}
\newcommand{\cC}{\mathcal{C}}
\newcommand{\cN}{\mathcal{N}}
\newcommand{\cH}{\mathcal{H}}
\renewcommand{\d}{\delta}
\newcommand{\be}{\begin{equation}}
\newcommand{\ee}{\end{equation}}
\newcommand{\om}{\omega}
\newcommand{\si}{\sigma}
\newcommand{\dz}{\wedge}
\newcommand{\ba}{\begin{array}}
\newcommand{\ea}{\end{array}}
\newcommand{\beq}{\begin{eqnarray}}
\newcommand{\eeq}{\end{eqnarray}}
\newtheorem{lm}{lemma}
\newtheorem{thee}{theorem}
\newtheorem{proo}{proposition}
\newtheorem{co}{corollary}
\newtheorem{rem}{remark}
\newtheorem{deff}{definition}
\newcommand{\bd}{\begin{deff}}
\newcommand{\ed}{\end{deff}}
\newcommand{\bl}{\begin{lm}}
\newcommand{\el}{\end{lm}}
\newcommand{\bp}{\begin{proo}}
\newcommand{\ep}{\end{proo}}
\newcommand{\bt}{\begin{thee}}
\newcommand{\et}{\end{thee}}
\newcommand{\bc}{\begin{co}}
\newcommand{\ec}{\end{co}}
\newcommand{\brm}{\begin{rem}}
\newcommand{\erm}{\end{rem}}
\newcommand{\der}{{\rm d}}
\def\frak{\mathfrak}
\def\Cal{\mathcal}
\newcommand{\newc}{\newcommand}
\newcommand{\id}{\operatorname{id}}
\newcommand{\rank}{\operatorname{rank}}
\newcommand{\Tor}{\operatorname{Tor}}
\newcommand{\Alt}{\operatorname{Alt}}
\let\ccdot\cdot
\def\cdot{\hbox to 2.5pt{\hss$\ccdot$\hss}}
\newc{\aR}{\mbox{\boldmath{$ R$}}}
\newc{\aS}{\mbox{\boldmath{$ S$}}}
\newc{\aT}{\mbox{\boldmath{$ T$}}}
\newc{\aW}{\mbox{\boldmath{$ W$}}}
\newc{\aK}{\mbox{\boldmath{$ K$}}}
\newc{\aL}{\mbox{\boldmath{$ L$}}}
\newcommand{\ce}{{\Cal E}}
\newcommand{\Up}{\Upsilon}
\newcommand{\Ric}{\operatorname{Ric}}
\newcommand{\wh}{\widehat}
\let\i=\iota
\newcommand{\hook}{\raisebox{-0.35ex}{\makebox[0.6em][r]
{\scriptsize $-$}}\hspace{-0.15em}\raisebox{0.25ex}{\makebox[0.4em][l]{\tiny
 $|$}}}
\def\LOT{{\rm LOT}}
\newcommand{\cW}{{\Cal W}}
\newcommand{\bma}{\begin{pmatrix}}
\newcommand{\ema}{\end{pmatrix}}
\newcommand{\IT}[1]{{\rm(}{\it{#1}}{\rm)}}
\newcommand{\gac}[1]{{\stackrel{\scriptscriptstyle{#1}}{\Gamma}}\phantom{}}
\newcommand{\nn}[1]{(\ref{#1})}
\newcommand{\bg}{\mbox{\boldmath{$ g$}}}
\newcommand{\G}[1]{{\stackrel{\scriptscriptstyle{#1}}{G}}\phantom{}}
\newcommand{\A}[1]{{\stackrel{\scriptscriptstyle{#1}}{A}}\phantom{}}
\newc{\obstrn}[2]{B^{#1}_{#2}}
\newcommand{\rpl}                         
{\mbox{$
\begin{picture}(12.7,8)(-.5,-1)
\put(0,0.2){$+$}
\put(4.2,2.8){\oval(8,8)[r]}
\end{picture}$}}
\newcommand{\lpl}                         
{\mbox{$
\begin{picture}(12.7,8)(-.5,-1)
\put(2,0.2){$+$}
\put(6.2,2.8){\oval(8,8)[l]}
\end{picture}$}}
\newc{\tensor}[1]{#1}
\newc{\Mvariable}[1]{\mbox{#1}}
\newc{\down}[1]{{}_{#1}}
\newc{\up}[1]{{}^{#1}}
\newc{\JulyStrut}{\rule{0mm}{6mm}}
\newc{\midtenPan}{\mbox{\sf S}}
\newc{\midten}{\mbox{\sf T}}
\newc{\midtenEi}{\mbox{\sf U}}
\newc{\ATen}{\mbox{\sf E}}
\newc{\BTen}{\mbox{\sf F}}
\newc{\CTen}{\mbox{\sf G}}
\def\sideremark#1{\ifvmode\leavevmode\fi\vadjust{\vbox to0pt{\vss
 \hbox to 0pt{\hskip\hsize\hskip1em
 \vbox{\hsize3cm\tiny\raggedright\pretolerance10000
 \noindent #1\hfill}\hss}\vbox to8pt{\vfil}\vss}}}%
\numberwithin{equation}{section}
\newcounter{romenumi}
\newcommand{\labelromenumi}{(\roman{romenumi})}
\begin{document}
\title[Calculus and invariants on almost complex manifolds]{Calculus and invariants on almost complex manifolds, including projective and conformal geometry}
\vskip 1.truecm \author{A.\ Rod Gover} \address{Department of Mathematics,
  University of Auckland, Private Bag 92019, Auckland, New Zealand; Mathematical Sciences Institute,
Australian National University, ACT 0200, Australia}
\email{rgover@auckland.ac.nz} \author{Pawe\l~ Nurowski}
\address{Instytut Fizyki Teoretycznej, Uniwersytet Warszawski,
  ul. Hoza 69, Warszawa, Poland} \email{nurowski@fuw.edu.pl}
\thanks{This research was supported by the Royal Society of New
  Zealand via Marsden Grant 10-UOA-113, and by the Polish Ministry of
  Research and Higher Education under grants NN201 607540 and NN202
  104838}

\subjclass[2000]{53C15, 53C05, 53A20, 53A30, 53C25}  \keywords{almost
  complex, connections, special geometric structures, projective
  differential geometry, conformal differential geometry}


\begin{abstract}
We construct a family of canonical connections and surrounding
basic theory for almost complex manifolds that are equipped with an
affine connection. This framework provides a uniform approach to
treating a range of geometries. In particular we are able to construct
an invariant and efficient calculus for conformal almost Hermitian
geometries, and also for almost complex structures that are equipped
with a projective structure. In the latter case we find a projectively
invariant tensor the vanishing of which is necessary and sufficient
for the existence of an almost complex connection compatible with the
path structure. In both the conformal and projective setting we give
torsion characterisations of the canonical connections and introduce
certain interesting higher order invariants.
\end{abstract}
\maketitle
\tableofcontents
\newcommand{\bbS}{\mathbb{S}}
\newcommand{\bbT}{\mathbb{T}}
\newcommand{\bbR}{\mathbb{R}}
\newcommand{\sog}{\mathbf{SO}}
\newcommand{\slg}{\mathbf{SL}}
\newcommand{\og}{\mathbf{O}}
\newcommand{\soa}{\frak{so}}
\newcommand{\sla}{\frak{sl}}
\newcommand{\sua}{\frak{su}}
\newcommand{\ua}{\frak{u}}
\newcommand{\ug}{\mathbf{U}}
\newcommand{\dr}{\mathrm{d}}
\newcommand{\sug}{\mathbf{SU}}
\newcommand{\glg}{\mathbf{GL}}
\newcommand{\gat}{\tilde{\gamma}}
\newcommand{\Gat}{\tilde{\Gamma}}
\newcommand{\thet}{\tilde{\theta}}
\newcommand{\Thet}{\tilde{T}}
\newcommand{\rt}{\tilde{r}}
\newcommand{\st}{\sqrt{3}}
\newcommand{\kat}{\tilde{\kappa}}
\newcommand{\kz}{{K^{{~}^{\hskip-3.1mm\circ}}}}
\newcommand{\bv}{{\bf v}}
\newcommand{\di}{{\rm div}}
\newcommand{\curl}{{\rm curl}}
\newcommand{\cs}{(M,{\rm T}^{1,0})}
\newcommand{\tn}{{\mathcal N}}


\newcommand{\lccon}{{\stackrel{\scriptscriptstyle{LC}}{\nabla}}\phantom{}}

\newcommand{\con}[1]{{\stackrel{\scriptscriptstyle{#1}}{\nabla}}\phantom{}}
\newcommand{\stack}[2]{{\stackrel{\scriptscriptstyle{#1}}{#2}}\phantom{}}
\newcommand{\ostack}[2]{{\stackrel{#1}{\scriptscriptstyle{#2}}}\phantom{}}

\newcommand{\dl}{{\stackrel{\scriptscriptstyle{LC}}{D}}\phantom{}}

\section{Introduction}

Let $M$ be a smooth manifold of even dimension $n=2m$. An almost
complex structure (ACS) $J$ on $M$ is an endomorphism of the tangent
bundle $TM$ such that $J^2=-1$. The study of almost complex structures
has a rich history, especially in connection with complex geometry
such as the theory of K\"ahler manifolds and closely linked
themes. There is by now rather sophisticated machinery available for
the treatment of almost complex geometries
\cite{Bismut,FFS,Gray1,GrayH,L13,PA1,PA2,PA3,TV}. However the basic calculus
has been typically developed starting from the assumption that there
is an almost Hermitian metric given as part of the data. From there it
is often not clear what parts of the results may be applied to
different geometric structures, or in more general settings.

Our aim in this article is to develop a uniform approach to the
calculus for almost complex manifolds which are also equipped with
some additional geometric structure such as a conformal structure, or
a projective structure. We indicate how this may be applied to the
construction of invariants of the structure; we treat in more detail
some of the less obvious new invariants that are seen to arise
naturally from this perspective. Because of the nature of our
endeavour there are inevitable close links with many results in the
literature, especially in the case where we specialise to almost
Hermitian geometry.  Within the scope of this article it would be
impossible to do justice to the very nice work that has been done in
this direction by many authors. However the works of Libermann, Obata,
and Lichnerowicz \cite{Lib10,L13,obata} are particularly
relevant. Much of that work is put into a uniform context by Gauduchon
in \cite{GauHconn}, where also some extensions and Dirac operators are
discussed.

Briefly the treatment and strategy for the calculus development is as
follows. In Section \ref{accalc} we treat almost complex affine
manifolds. This means the data of the structure is an almost complex
manifold $M$, of any even dimension, equipped with an almost complex
strucure $J$ and an affine connection $\nabla$. For this setting we
develop a basic calculus that includes a family of connections
determined by $(M,J,\nabla)$ that are canonical and almost complex,
meaning that they preserve $J$. The point here is that this is
developed in such a way that it then easily specialises to a range of
other geometries where there is additional structure, and so provides
a treatment of these that is uniform. The structures we treat are
almost Hermitian geometry, conformal almost Hermitian geometry, and
finally projective almost complex geometry.  A conformal almost
Hermitian geometry is the structure given by $(M^{2m},J,c)$ where
$m\geq 2$, and $c$ is a conformal equivalence class of almost
Hermitian metrics. A projective almost complex geometry consists of
$(M^{2m},J,p)$ where $m\geq 1$, and $p$ is a projective equivalence
class of torsion-free connections; two connections
$\nabla$ and $\widehat{\nabla}$ are said to be projectively equivalent
if they have the same geodesics as unparameterised curves.  A key
point, for our development, is that each of these structures can be
shown to have a canonical affine connection and so using this one may
immediately employ the general machinery developed in Section
\ref{accalc}.
Now we outline in more detail the developments and some of the main results.

As mentioned Section \ref{accalc} develops the basic tangent bundle
calculus for general affine almost complex manifolds. We prove that
the affine connection $\nabla$ and $J$ determine a fundamental
$(1,2)$-tensor $G$ that plays a central role throughout the
article. Using this we prove, for example, in Proposition \ref{gencon}
that the structure determines a 1-parameter family of canonical
connections on $TM$ that preserve the almost complex structure $J$.
There is a distinguished connection $\con{KN}$ in the class with
anti-Hermitian torsion. This has the property that in the case that
$\nabla$ is torsion free then the torsion of $\con{KN}$ is precisely
the Nijenhuis tensor, see Proposition \ref{KN}.  Section \ref{accalc}
also contains many technical results for use later in the article. For
example we introduce there an important notion of compatibility
between an affine connection and $J$, this amounts to $G$ being
completely trace-free in the sense of Lemma \ref{compl}.

Next Section \ref{AHs} treats the case of almost Hermitian
geometry. Of course on almost Hermitian manifolds the basic tensor
calculus has been treated considerably in the literature. So the main
points of this section are first: to indicate how the usual objects
arise by simple specialisation of the tools from the $G$-calculus of
Section \ref{accalc}; and second to lay out the almost Hermitian
results for comparison with the results for conformal and projective
structures which follow in the later sections. Building on work of
Gilkey \cite{Gilkey}, and others, there is a classification by
Gray-Hervella \cite{GrayH} of almost Hermitian manifolds according to
a $U(m)$-decomposition of $\nabla \om$. We describe in Proposition
\ref{cin} how certain key cases from the Gray-Hervella list, such as
nearly K\"ahler, Hermitian, and almost K\"ahler, are identified in
terms of $G$.  Beginning with the Levi-Civita connection $\nabla$
then from the family of almost complex connections of Proposition
\ref{gencon} there is a unique connection that preserves the metric.
We show in Theorem \ref{char} how, in our framework, there is a
torsion characterisation of this distinguished connection. This (or
its equivalents in the literature) provides a universal solution to
the problem of finding a type of characteristic connection for each of
the structures of the Gray-Hervella classification and this is the
subject of Corollary \ref{charGH}.

Section \ref{confS} begins the more involved application of the
approach. On a conformal almost Hermitian manifold there is a
canonical and unique Weyl connection $\con{c}$ that is compatible with
$J$ (cf.\ \cite{Vaisman,Bailey}). This
provides the basic input to generate the conformal version of the
tools from Section \ref{accalc}.  Using this one concludes there is a
conformally invariant connection $\con{gc}$ determined by structure
$(M,J,c)$ which preserves $J$ and the conformal structure, see
Proposition \ref{generc}.  Theorem \ref{torsionB} shows that
connections with torsion that both, preserve the conformal structure
and are suitably compatible with the complex structure, are
parametrised by their torsions. This is then used to show that the
vanishing of a canonical conformal torsion invariant suffices to
characterise the connection $\con{gc}$ among all almost complex
connections on the structure $(M,J,c)$, see Proposition \ref{confTs}
and Theorem \ref{cchar}. This means that the in conformal setting the
results are really as strong as in the almost Hermitian case, which we
find surprising. In the subsections \ref{NKWS}, \ref{LCAKS}, and
\ref{cGH} we show how the structures of the Gray-Hervella conformal
almost Hermitian classification are described and treated via the
$G$-calculus. Finally in Section \ref{ci} we show that the canonical Weyl
structure of the manifold $(M,J,c)$ leads to some interesting higher
order conformal invariants, including global invariants and objects
that are analogues of $Q$-curvature.

Section \ref{PGS} is the last of the theoretical developments and
treats almost complex manifolds that are also equipped with a
projective structure $p$. In analogy with the conformal case, we prove
in Proposition \ref{projJ} that there is a unique connection $\con{p}$
in $p$ that is compatible with $J$. We observe in Corollary
\ref{pgeod} that this implies a distinguished class of parametrised
curves, namely those curves which are the geodesics of the connection
$\con{p}$. (For an affine connection $\nabla$ its {\em geodesics} are
those curves whose tangent field $X$ satisfies $\nabla_X X=0$.)
Theorem \ref{tprop} determines a distinguished almost complex
connection $\con{JP}$. This has the property that if a certain
projective invariant $\G{p}_-^{\rm symm}$ vanishes then $\con{JP}$ has
as geodesics the mentioned distinguished class of curves, see
Corollary \ref{punch}.  ($\G{p}_-^{\rm symm}$ it is the anti-Hermitian
symmetric part of the fundamental $G$-invariant $\G{p}$ for the
structure $(M,J,p)$ see e.g. Subsection \ref{pGH}.)  The main result
of Section \ref{PGS} is Theorem \ref{projR} which proves that if
$\nabla'$ is any affine connection, that preserves $J$ and agrees
with the path structure $p$, then necessarily this invariant
$\G{p}_-^{\rm symm}$ vanishes identically, $\nabla'$ is simply related
to $\con{JP}$, and the geodesics of $\nabla' $ agree with the
distinguished curves. This is a fundamental result concerning the
relation between almost complex and projective geometry.  Among other
things this shows that the connection $\con{JP}$ is optimal and that
the condition of vanishing of $\G{p}_-^{\rm symm}$ is an important and
canonical condition of {\em compatibility} between the complex
structure $J$ and the projective structure $p$ (hence the Definition
\ref{PJcomp}).  There is a torsion characterisation of $\con{JP}$
given in Corollary \ref{torch}, so for the compatible projective
almost complex structures the results are again as strong as for the
almost Hermitian case. In Subsection \ref{pGH} we describe projective
analogues of the objects in the Gray-Hervella conformal classification
and discuss related issues. Finally in the Section higher projective
invariants are discussed briefly in Subsection \ref{pinvt}.

Section \ref{EXS} shows that examples are available for the various
structures. In fact we treat just a few cases here as for most of the
structures it is rather obvious that there will be structures
available satisfying the various conditions.

\subsection{Conventions}
 For simplicity all structures will be assumed smooth, meaning
 $C^\infty$.  Unless otherwise stated, $X,Y$ denote arbitrary sections
 of the tangent bundle $TM$. We also from time to time, as convenient,
 employ Penrose's abstract index formalism \cite{ot}. For example
 $\ce^a$ is an alternative notation for $TM$ (or its section space, we
 shall not distinguish) and $X^a,Y^a$ denote sections thereof. The
 almost complex structure $J$ is written via abstract indices as
 $J^a{}_b$, so that $J X$ may be written $J^a{}_b X^b$, and $J^a{}_b
 J^b{}_c=-\delta^a_c$, where $\delta^a_b$ is the pointwise identity
 endomorphism on $TM$. Here the repeated indices indicate
 contractions.

\section{Calculus on an almost complex affine manifold}\label{accalc}

Here we develop a canonical calculus for almost complex manifolds that
are also equipped with an affine connection. This then forms the basis
for our subsequent treatment of other geometries.

\subsection{Almost complex affine connections} \label{acac}

Given an almost complex manifold $(M,J)$ an affine connection $\nabla$
on $M$ is called {\em almost complex} if it preserves $J$. We first
observe that any affine connection can be modified to yield such a
connection.

\newcommand{\nag}{\nabla^{\vspace*{-6pt} G}}

 Let $\nabla$ be any affine connection on an almost complex
  $n$-manifold $M$. Let $H$ be a $(1,2)$ tensor field on $M$ and
consider the connection $\con{H}$ defined by 
$$
\con{H}_X Y: = \nabla_X Y+ H(Y,X) \quad \mbox{for} \quad X,Y\in \Gamma(TM).
$$
We seek $H$ such that $\con{H}_X (JY)= J\con{H}_X Y$. This is
equivalent to
\begin{equation}\label{jh}
\nabla_X (JY) - J\nabla_X Y =JH(Y,X)-H(JY,X).
\end{equation}
Evidently if $H$ is a solution then we obtain another solution by
adding a $(1,2)$ tensor field $K$ which is $J$ linear in the first
argument: $K(JY,X)=JK(Y,X)$. To remove this freedom we replace $H$
with a $(1,2)$ tensor $G$ that is assumed to be $J$-antilinear in the
first argument: $G(JY,X)=-JG(Y,X)$.  Then \nn{jh} becomes
$$
\nabla_X (JY) - J\nabla_X Y =2 JG(Y,X) ,
$$
which may be solved for $G$ to yield $G(Y,X)=-\frac{1}{2}J(\nabla_X
J)Y$.  Moreover for any $H$ solving \nn{jh}, $G$ is the complex
anti-linear part over the first argument, i.e.\
$G(\cdot,\cdot)=\frac{1}{2}(H(\cdot,\cdot)+JH(J\cdot,\cdot))$. Thus
the general solution to \nn{jh} is of the form $H=G+K$, for some $K$ as above.
We summarise as follows.
\begin{proposition}\label{gener} 
  Let $(M,J)$ be an almost complex $n$-manifold and $\nabla$ an arbitrary
 affine connection on $M$. 
Then $\con{G}$ is an almost complex connection, that is
 $\con{G} J=0$, where 
$$
\con{G}_X Y:= \nabla_X Y + G(Y,X),
$$
and
\begin{equation}\label{Gform}
G(X,Y): =\frac{1}{2}(\nabla_Y J)JX=-\frac{1}{2}J(\nabla_Y J)X .
\end{equation}

Moreover, if $\con{H}$ is any connection preserving $J$ then
$$
\con{H}_X Y =\con{G}_X Y + K(Y,X)
$$
where $K$ is a $(1,2)$ tensor which is complex linear in the first
argument.
\end{proposition}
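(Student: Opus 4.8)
The plan is to verify directly that $\con{G}$ preserves $J$ and then characterise the general $J$-preserving modification. First I would compute $(\con{G}_X J)Y = \con{G}_X(JY) - J\con{G}_X Y$. Expanding with the definition $\con{G}_X Z = \nabla_X Z + G(Z,X)$, this equals $\nabla_X(JY) + G(JY,X) - J\nabla_X Y - JG(Y,X) = (\nabla_X J)Y + G(JY,X) - JG(Y,X)$. Using the prescribed form \nn{Gform}, one has $G(JY,X) = -\tfrac12 J(\nabla_X J)(JY)$ and $JG(Y,X) = -\tfrac12 J J(\nabla_X J)Y = \tfrac12 (\nabla_X J)Y$. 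So the expression becomes $(\nabla_X J)Y - \tfrac12 J(\nabla_X J)(JY) - \tfrac12 (\nabla_X J)Y = \tfrac12(\nabla_X J)Y - \tfrac12 J(\nabla_X J)(JY)$. The remaining step is to show $J(\nabla_X J)(JY) = (\nabla_X J)Y$, which follows by differentiating $J^2 = -1$: from $\nabla_X(J^2) = 0$ we get $(\nabla_X J)J + J(\nabla_X J) = 0$, i.e. $(\nabla_X J)J = -J(\nabla_X J)$, hence $J(\nabla_X J)(JY) = -J J(\nabla_X J)Y = (\nabla_X J)Y$. Substituting back gives $(\con{G}_X J)Y = 0$, establishing $\con{G} J = 0$.

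For the second part, suppose $\con{H}$ is any connection preserving $J$. Both $\con{H}$ and $\con{G}$ are affine connections, so their difference is a $(1,2)$ tensor: write $\con{H}_X Y = \con{G}_X Y + K(Y,X)$ for some $(1,2)$ tensor $K$. I would then impose that $\con{H}$ preserves $J$: since $\con{G}$ already does, the condition $(\con{H}_X J)Y = 0$ reduces to $K(JY,X) - JK(Y,X) = 0$, i.e. $K$ is complex linear in its first argument. Conversely adding any such $K$ to $\con{G}$ preserves $J$, by the same reduction. This is exactly the statement. (One should note the sign/argument-order conventions: the text's display above \nn{Gform} records $G(Y,X) = -\tfrac12 J(\nabla_X J)Y$, consistent with \nn{Gform} after swapping the names of the two arguments, so care is needed only in bookkeeping, not in substance.)

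There is essentially no serious obstacle here; the proof is a short direct computation. The one point requiring a little care is the algebraic identity $(\nabla_X J)J = -J(\nabla_X J)$ and, relatedly, the sign conventions in \nn{Gform} versus the preceding displayed formula for $G(Y,X)$ — these must be tracked consistently so that the factors of $\tfrac12$ and the minus signs cancel correctly. Everything else (the difference of two connections being a tensor, the reduction of the $J$-preservation condition once the $\con{G}$ part is known to preserve $J$) is routine. Thus I would present the computation of $(\con{G}_X J)Y$ in a single display, invoke the $\nabla J^2 = 0$ identity, and then dispatch the uniqueness-up-to-$K$ claim in one or two lines.
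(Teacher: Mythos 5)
Your proof is correct and is essentially the paper's argument in reverse order: the paper \emph{derives} $G$ by splitting the candidate difference tensor $H$ into its $J$-linear and $J$-antilinear parts in the first slot and solving $\nabla_X(JY)-J\nabla_XY=2JG(Y,X)$ for the antilinear part, whereas you \emph{verify} that the stated $G$ works and then reduce the general case to the complex-linear remainder $K$ — both hinge on the same identity $(\nabla_XJ)J=-J(\nabla_XJ)$ coming from $\nabla_X(J^2)=0$. No gaps; your bookkeeping of the argument-order convention in \nn{Gform} is also correct.
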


In places below it is convenient to use abstract index notation for
$G$ and the related connections: we write $G^{a}{}_{bc}Y^bX^c$ for the
vector field $G(Y,X)$ and
$$
\con{G}_a Y^b:= \nabla_a Y^b + G^b{}_{ca}Y^c .
$$
In this notation 
$$
G^b{}_{ca}:= \frac{1}{2}(\nabla_a J^b{}_d)J^d{}_c= -\frac{1}{2}J^b{}_d \nabla_aJ^d{}_c~. 
$$

We note some properties of $G$ for later use.
\begin{lemma}\label{tffs} For all tangent vectors $X$,
$G(\cdot,X)$ is trace-free and $JG(\cdot ,X)$ is trace-free.
If $\nabla $ preserves a volume form on M, then  $\con{G}$ preserves the same volume form.
\end{lemma}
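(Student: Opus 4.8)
The plan is to verify the three assertions of Lemma \ref{tffs} directly from the explicit formula \nn{Gform} for $G$, using only that $J^2=-1$ and elementary properties of the trace. For the first claim I would show that $\mathrm{tr}\,G(\cdot,X)=G^b{}_{bc}X^c=0$. Using $G^b{}_{ca}=-\tfrac12 J^b{}_d\nabla_a J^d{}_c$ the trace over $b$ and $c$ becomes $-\tfrac12 J^b{}_d\nabla_a J^d{}_b=-\tfrac14\nabla_a(J^b{}_d J^d{}_b)$ by symmetry of the contraction, and since $J^b{}_dJ^d{}_b=\mathrm{tr}(J^2)=-n$ is constant, its derivative vanishes; hence $G(\cdot,X)$ is trace-free. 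For the second claim, $JG(\cdot,X)$ has components $J^a{}_b G^b{}_{ca}$ (tracing $a$ with $c$), which equals $-\tfrac12 J^a{}_b J^b{}_d\nabla_a J^d{}_a=\tfrac12\nabla_a J^a{}_a$ after using $J^a{}_bJ^b{}_d=-\delta^a_d$; and $\nabla_a J^a{}_a=\mathrm{tr}(\nabla_a J)$, which vanishes because $\mathrm{tr}\,J=J^a{}_a=0$ (as $J^2=-1$ forces zero trace in each tangent space) is constant, so its covariant derivative is zero. Thus $JG(\cdot,X)$ is trace-free as well. I should be a little careful about index bookkeeping — the statement ``$G(\cdot,X)$ trace-free'' means the trace over the output index and the \emph{first} input slot, and similarly for $JG(\cdot,X)$ — so I would phrase the computation with explicit abstract indices to avoid ambiguity.

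For the volume-form statement, suppose $\nabla$ preserves a volume form $\vol$, i.e.\ $\nabla\vol=0$. A connection $\con{G}=\nabla+G$ preserves $\vol$ precisely when the trace of the deformation tensor vanishes: explicitly, $\con{G}_a\vol_{b_1\cdots b_n}=\nabla_a\vol_{b_1\cdots b_n}-\sum_i G^c{}_{b_i a}\vol_{b_1\cdots c\cdots b_n}=-\big(\sum_i G^c{}_{b_i a}\delta^{b_i}_c\big)\vol_{b_1\cdots b_n}=-\,G^c{}_{ca}\,\vol_{b_1\cdots b_n}$, using that any totally antisymmetric $n$-tensor in $n$ dimensions is a function times $\vol$, and that only the fully antisymmetrized replacement survives. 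Since $G^c{}_{ca}=0$ is exactly the first claim (trace-freeness of $G$ over its output index and first input), we get $\con{G}\vol=0$.

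The main obstacle — really a point to get right rather than a genuine difficulty — is making sure the two trace-freeness claims are distinct and both needed: one is the trace over the output slot with the first argument, the other is the trace of $J$ composed with that, and they follow respectively from $\mathrm{tr}(J^2)$ and $\mathrm{tr}(J)$ being covariantly constant (indeed constant). The volume-form claim is then an immediate corollary of the first trace identity, since preservation of a volume form by $\nabla+G$ is governed solely by the trace $G^c{}_{ca}$. No estimates or deep structure are involved; the whole lemma is a short index computation built on $J^2=-1$.
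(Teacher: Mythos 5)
Your proposal is correct and follows essentially the same route as the paper: the first trace is $\tfrac14\nabla_a\operatorname{tr}(J^2)=0$, the second uses $2JG(\cdot,X)=\nabla_X J$ together with $\operatorname{tr}J=0$, and the volume-form statement reduces to the vanishing of $G^c{}_{ca}$ (the paper simply says this "follows immediately" where you write out the $\con{G}_a\vol=-G^c{}_{ca}\vol$ identity explicitly). The only thing to fix is the index clash in your second computation, where the letter $a$ serves simultaneously as the output index, the derivative index, and a contracted index — as you yourself note, this should be rewritten as $2J^a{}_bG^b{}_{cd}=\nabla_dJ^a{}_c$ before tracing over $a$ and $c$.
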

\begin{proof}
We have 
$$
2G^b{}_{ba}= (\nabla_a J^b{}_d)J^d{}_b=\frac{1}{2}\nabla_a
(J^b{}_dJ^d{}_b) =0.
$$
This proves first claim and the last statement follows immediately.

For the second claim we re-express $2J^a{}_b G^b{}_{cd}$. This is 
\begin{equation}\label{JG}
J^a{}_b(\nabla_d J^b{}_e)J^e{}_c = \nabla_d J^a{}_c.
\end{equation}
Since the almost complex structure $J$ is trace-free the result follows.
\end{proof}
\begin{lemma}\label{Gprop}
  $ G(JX,Y)=-JG(X,Y) $ and hence 
its Hermitian part
$$
G_+(X,Y):=\tfrac{1}{2}\big(G(X,Y)+G(JX,JY)\big)
$$ 
and its anti-Hermitian part 
$$
G_-(X,Y):=\tfrac{1}{2}\big(G(X,Y)-G(JX,JY)\big)
$$ 
have the following  properties: 
\begin{equation}\label{Jlin}
G_\pm (X,JY)=\pm JG_\pm (X,Y),
\end{equation}
and
\begin{equation}\label{Jlin2}
G_\pm (JX,Y)= - J G_\pm (X,Y)
\end{equation}
\end{lemma}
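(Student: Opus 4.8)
The plan is to prove the first identity $G(JX,Y)=-JG(X,Y)$ directly from the formula \eqref{Gform}, and then to obtain \eqref{Jlin} and \eqref{Jlin2} as purely formal consequences of that identity together with $J^2=-1$. For the first identity I would substitute $JX$ in place of $X$ in $G(X,Y)=\tfrac12(\nabla_Y J)JX$: since $J(JX)=-X$ this gives $G(JX,Y)=-\tfrac12(\nabla_Y J)X$. On the other hand, using the alternative expression $G(X,Y)=-\tfrac12 J(\nabla_Y J)X$ one finds $-JG(X,Y)=\tfrac12 J^2(\nabla_Y J)X=-\tfrac12(\nabla_Y J)X$, which agrees. (Alternatively one may simply record that this antilinearity in the first slot is precisely the defining property imposed on $G$ in the construction preceding Proposition \ref{gener}.)

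For the second half I would first note an immediate corollary: replacing $Y$ by $JY$ in the first identity gives $G(JX,JY)=-JG(X,JY)$, and applying $J$ to both sides yields the auxiliary identity $JG(JX,JY)=G(X,JY)$, using $J^2=-1$. Now expand, for instance, $G_\pm(X,JY)=\tfrac12\big(G(X,JY)+G(JX,J(JY))\big)=\tfrac12\big(G(X,JY)\mp G(JX,Y)\big)$, and substitute $G(JX,Y)=-JG(X,Y)$ to obtain $G_\pm(X,JY)=\tfrac12\big(G(X,JY)\pm JG(X,Y)\big)$. Expanding the right-hand side of \eqref{Jlin}, $\pm JG_\pm(X,Y)=\tfrac12\big(\pm JG(X,Y)+JG(JX,JY)\big)$, and invoking $JG(JX,JY)=G(X,JY)$ shows the two sides coincide. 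The verification of \eqref{Jlin2} runs in parallel: expand $G_\pm(JX,Y)=\tfrac12\big(G(JX,Y)\mp G(X,JY)\big)$, use the first identity on the first term, and compare with $-JG_\pm(X,Y)=\tfrac12\big(-JG(X,Y)\mp JG(JX,JY)\big)$, again using $JG(JX,JY)=G(X,JY)$.

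I do not expect a genuine obstacle here: once the antilinearity $G(JX,Y)=-JG(X,Y)$ is established, everything reduces to expanding the definitions of $G_\pm$ and repeatedly applying $J^2=-1$. The only place needing a little care is tracking the signs produced when $J$ is pulled through the second argument of $G$; organising the calculation around the single auxiliary identity $JG(JX,JY)=G(X,JY)$ keeps that bookkeeping under control. It is perhaps worth remarking that \eqref{Jlin} says exactly that $G_+$ is complex linear and $G_-$ complex antilinear in the second argument, which is the conceptual content behind the otherwise mechanical computation.
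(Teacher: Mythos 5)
Your proposal is correct and takes essentially the same route as the paper: the antilinearity $G(JX,Y)=-JG(X,Y)$ is verified directly from \nn{Gform}, and then \nn{Jlin} and \nn{Jlin2} follow by expanding $G_\pm$ and repeatedly using $J^2=-1$. The paper's displayed chain of equalities is just a reorganisation of the same manipulation you carry out via the auxiliary identity $JG(JX,JY)=G(X,JY)$.
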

\begin{proof} The property $ G(JX,Y)=-JG(X,Y) $ is a consequence of
  the proof of Proposition \ref{gener}; alternatively it is easily
  verified from \nn{Gform}.
Using this we have
$$
\begin{array}{rcl}
2G_{\pm}(X,JY) &=& G(X,JY)\pm JG(X,Y) \\
               &= &J\big( \pm G(X,Y)- JG(X,JY) \big)\\
&= & J\big( \pm G(X,Y)+ G(JX,JY) \big)\\
&= & \pm 2 J G_{\pm}(X,Y)~.
\end{array}
$$
A similar calculation yields \nn{Jlin2}.
\end{proof}

In Proposition \ref{gener} we showed that on an almost complex
manifold $(M,J)$ the space of almost complex connections is affine
modelled on the space of $(1,2)$ tensor fields which are complex linear in
the first argument.  Fix an affine connection $\nabla$, as in that Proposition.
It follows immediately from the property
\nn{Jlin} of $G_+$, in the Lemma \ref{Gprop}, that we may, in
particular, use multiples of $G_+$ to modify the connection $\con{G}$,
while retaining the property that the new connection preserves $J$.
\begin{proposition} \label{gencon} Let $\nabla$ 
be an affine connection on an almost complex manifold
$(M,J)$, and $\con{G}$ the corresponding almost complex connection as
above. For any $t \in \mathbb{R}$, $\con{t}$ is an  
almost complex connection where 
$$
\con{t}_X Y: = \con{G}_XY + t G_+(X,Y).
$$
\end{proposition}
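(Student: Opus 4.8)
The plan is to verify directly that the new connection $\con{t}$ preserves $J$, by reducing the claim to the already established facts about $\con{G}$ and $G_+$. First I would recall from Proposition \ref{gener} that $\con{G} J = 0$, i.e.\ $\con{G}_X(JY) = J\con{G}_X Y$ for all $X,Y$. Since $\con{t}_X Y = \con{G}_X Y + tG_+(X,Y)$, I would compute $\con{t}_X(JY) - J\con{t}_X Y$ and observe that the $\con{G}$-terms cancel by the previous sentence, leaving only $t\big(G_+(X,JY) - JG_+(X,Y)\big)$. By the property \nn{Jlin} of $G_+$ in Lemma \ref{Gprop}, namely $G_+(X,JY) = JG_+(X,Y)$, this expression vanishes identically. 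Hence $\con{t}_X(JY) = J\con{t}_X Y$, which is exactly the statement that $\con{t}$ preserves $J$.

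It remains only to note that $\con{t}$ is genuinely a connection: this is immediate because it differs from the affine connection $\con{G}$ by the tensorial term $tG_+(\cdot,\cdot)$ (a $(1,2)$-tensor multiplied by a real scalar), and adding a $(1,2)$-tensor to a connection yields a connection. One should record the correct variance convention, matching the paper's display $\con{t}_X Y := \con{G}_X Y + tG_+(X,Y)$ and the earlier pattern $\con{H}_X Y = \nabla_X Y + H(Y,X)$; here the modification is written with arguments $(X,Y)$ rather than $(Y,X)$, but this only affects bookkeeping and the Leibniz/linearity checks go through verbatim.

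There is essentially no obstacle here: the proposition is a one-line corollary of Lemma \ref{Gprop}, specifically of the relation $G_+(X,JY) = +JG_+(X,Y)$, combined with $\con{G}J = 0$. The only point requiring minor care is to make sure the sign in \nn{Jlin} is used with the $+$ subscript (so that the correction term drops out rather than doubling), and to confirm that $t$ being real — so that it commutes with $J$ — is all that is needed; no further structural hypothesis on $\nabla$ enters. Thus the write-up is short: state that $\con{t}$ is a connection, then exhibit the cancellation $\con{t}_X(JY) - J\con{t}_X Y = t\big(G_+(X,JY) - JG_+(X,Y)\big) = 0$.
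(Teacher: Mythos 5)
Your proof is correct and follows essentially the same route as the paper: the paper justifies Proposition \ref{gencon} by citing the second part of Proposition \ref{gener} together with property \nn{Jlin} of $G_+$, and your direct computation $\con{t}_X(JY)-J\con{t}_XY=t\big(G_+(X,JY)-JG_+(X,Y)\big)=0$ is just that argument unwound explicitly. Your remarks on the argument-order convention and on $\con{t}$ being a genuine connection are accurate and handled correctly.
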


\subsection{Torsion and Integrability} \label{int} 
In the above we have not made any assumptions concerning the torsion
of $\nabla$. Beginning with any connection $\tilde{\nabla}$ with
torsion $\tilde{T}$ the related connection $\nabla$ defined by
$\nabla_XY:= \tilde{\nabla}_XY-\frac{1}{2}\tilde{T}(X,Y)$ is torsion
free. In the case
that $\nabla$ is torsion free then we obtain very useful formulae for the Nijenhuis tensor
$N_J$. (The normalisation of $N_J$ is for convenience.)
\begin{proposition} \label{nj} For any torsion free connection
  $\nabla$ on an almost complex manifold $(M,J)$ we have
\begin{equation}\label{njform}
4 N_J(X,Y)=(\nabla_X J)J Y-(\nabla_Y J)J X 
+(\nabla_{JX} J) Y-(\nabla_{JY} J)X ,
\end{equation}
and hence
\begin{equation} \label{njformG}
N_J(X,Y)= G_-(Y,X)-G_-(X,Y)
\end{equation}
\end{proposition}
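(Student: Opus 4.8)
The plan is to establish \nn{njform} first, and then obtain \nn{njformG} as a purely algebraic consequence by rewriting the right-hand side in terms of the tensor $G$ via its defining formula \nn{Gform}.

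For the first part, I would start from the standard definition of the Nijenhuis tensor,
$$
N_J(X,Y) = \tfrac14\Big( [JX,JY] - J[JX,Y] - J[X,JY] - [X,Y] \Big),
$$
(with the normalisation absorbing the factor displayed in the statement), and then replace each Lie bracket $[U,V]$ by $\nabla_U V - \nabla_V U$, which is legitimate precisely because $\nabla$ is torsion free. Expanding, the eight resulting $\nabla$-terms regroup: terms like $\nabla_{JX}(JY) - J\nabla_{JX}Y$ become $(\nabla_{JX}J)Y$, and terms like $-J\nabla_{X}(JY) + \nabla_X Y = -J\big((\nabla_X J)Y\big) - J^2\nabla_X Y + \nabla_X Y = -J(\nabla_X J)Y$ (using $J^2=-1$), and similarly for the remaining pairs; the pure $\nabla_X Y$-type pieces cancel against the $J^2$ pieces. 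One must also use $(\nabla_U J)(JY) = -J(\nabla_U J)Y$, which follows by differentiating $J^2=-1$. Collecting the four surviving terms and checking the signs gives exactly \nn{njform}. This bookkeeping, though elementary, is where the one real chance for a sign slip lies, so I would be careful to write each of the four bracket contributions out explicitly before simplifying.

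For the second part I would simply substitute the identity $(\nabla_Y J)JX = 2G(X,Y)$ — which is \nn{Gform} read backwards — into each of the four terms of \nn{njform}. The terms $(\nabla_X J)JY$ and $-(\nabla_Y J)JX$ give $2G(Y,X) - 2G(X,Y)$ directly. For the remaining two terms I would rewrite $(\nabla_{JX}J)Y$ using $(\nabla_U J)Y = -J(\nabla_U J)(JY)\cdot$ — more precisely, from \nn{Gform}, $(\nabla_{JX}J)Y = -2J G(J^{-1}Y, JX)$; cleaner is to note $(\nabla_{JX}J)Y = (\nabla_{JX}J)(J(JY))\cdot(-1)^{-1}$, so it equals $-\,(\nabla_{JX}J)J(JY) = -2G(JY,JX)$. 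Hence $(\nabla_{JX}J)Y - (\nabla_{JY}J)X = -2G(JY,JX) + 2G(JX,JY)$. Therefore
$$
4N_J(X,Y) = 2\big(G(Y,X) - G(JY,JX)\big) - 2\big(G(X,Y) - G(JX,JY)\big) = 4\big(G_-(Y,X) - G_-(X,Y)\big),
$$
using the definition of $G_-$ from Lemma \ref{Gprop}, which yields \nn{njformG}.

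The main obstacle is genuinely just the first step: getting the eight $\nabla$-terms from the expanded brackets to collapse correctly into the four $(\nabla J)$-terms with the right signs, which hinges on consistent use of $J^2 = -1$ and $(\nabla J)\circ J = -J\circ(\nabla J)$. Once \nn{njform} is in hand, the passage to \nn{njformG} is a mechanical application of \nn{Gform} together with the definition of the anti-Hermitian part $G_-$, and requires no new ideas. It would also be worth remarking that \nn{njformG} makes manifest that $N_J$ is automatically anti-Hermitian and antisymmetric, consistent with its known properties.
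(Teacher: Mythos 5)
Your proposal is correct and follows essentially the same route as the paper: the paper simply cites the literature for the bracket-expansion identity \nn{njform} and then states that \nn{njformG} follows by rewriting via \nn{Gform}, which is precisely the substitution you carry out (your identities $(\nabla_X J)JY=2G(Y,X)$ and $(\nabla_{JX}J)Y=-2G(JY,JX)$, and the resulting regrouping into $G_-$, all check out). The only difference is that you supply the standard torsion-free derivation of \nn{njform} explicitly rather than citing it, and that derivation is also correct.
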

\noindent{\bf Proof:} The formula \nn{njform} is well known, see e.g.\
\cite{Kruglikov}. Using \nn{Gform} to rewrite this
in terms of $G$ yields \nn{njformG}.  \quad $\Box$

It is immediate from the definition of $\con{G}$ that if $\nabla$ is a
torsion free connection then
\begin{equation}\label{Ttf}
T^G(X,Y)=G(Y,X)-G(X,Y).
\end{equation}
Thus we obtain the following consequence of the Proposition \ref{nj}.
\begin{corollary}\label{torG}
  Let $\nabla$ be a torsion free connection on an almost
  complex manifold $(M,J)$, and $\con{G}$ the corresponding almost
  complex connection given by Proposition \ref{gener}. Then the
  anti-Hermitian part of the torsion of $\con{G}$ is the Nijenhuis tensor,
\begin{equation}\label{Tory}
T^G_-(X,Y)=N_J(X,Y) .
\end{equation}
\end{corollary}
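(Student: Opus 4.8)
The plan is to combine the two results the corollary explicitly cites, namely Proposition \ref{nj} and the torsion formula \nn{Ttf}, and simply match the anti-Hermitian parts. First I would recall from \nn{Ttf} that for a torsion free $\nabla$ the torsion of $\con{G}$ is $T^G(X,Y)=G(Y,X)-G(X,Y)$. The anti-Hermitian part of a $(1,2)$ tensor $S$ in its two arguments is $S_-(X,Y):=\tfrac12\big(S(X,Y)-S(JX,JY)\big)$, so applying this to $T^G$ and using that the operation $(X,Y)\mapsto(JX,JY)$ commutes with the swap $(X,Y)\mapsto(Y,X)$, one gets $T^G_-(X,Y)=G_-(Y,X)-G_-(X,Y)$.

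Next I would invoke \nn{njformG} from Proposition \ref{nj}, which states precisely $N_J(X,Y)=G_-(Y,X)-G_-(X,Y)$. Comparing the two displayed expressions gives $T^G_-(X,Y)=N_J(X,Y)$, which is \nn{Tory}, and the proof is complete.

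The only point requiring the slightest care — and it is the closest thing to an obstacle, though a mild one — is the bookkeeping in passing from the anti-Hermitian part of $T^G$ to the anti-Hermitian parts of the individual terms $G(Y,X)$ and $G(X,Y)$: one must be sure that taking $G_-$ is compatible with the argument-swap, i.e.\ that $\big(G(\cdot,\cdot)-G(J\cdot,J\cdot)\big)$ evaluated at $(Y,X)$ really is $2G_-(Y,X)$ with the arguments of $G_-$ in the order $(Y,X)$. This is immediate from the definition of $G_-$ in Lemma \ref{Gprop}, since that definition is symmetric in the roles played by the two slots as far as the $J$-twist is concerned. Everything else is a one-line substitution, so I would keep the proof to essentially two displayed equations: one rewriting $T^G_-$ via \nn{Ttf}, and one reading off $N_J$ via \nn{njformG}.
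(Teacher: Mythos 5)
Your proposal is correct and is exactly the argument the paper gives: take the anti-Hermitian part of the torsion formula \nn{Ttf}, observe it equals $G_-(Y,X)-G_-(X,Y)$, and identify this with $N_J(X,Y)$ via \nn{njformG}. The extra care you take with the swap/anti-Hermitian-part bookkeeping is sound and just makes explicit what the paper leaves implicit.
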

\noindent{\bf Proof:} 
Taking the anti-Hermitian part of \nn{Ttf} gives the result by
\nn{njformG}. \quad $\Box$

Evidently we may use these observations to select a distinguished
connection $ \con{KN} $ from the class given in 
 Proposition
\ref{gencon}:
\begin{equation}\label{KNdef}
\con{KN}_X Y= \con{G}_X Y + G_+(X,Y). 
\end{equation}
For this connection, the Hermitian part of the torsion is zero, while
the anti-Hermitian part of its torsion agrees with the anti-Hermitian
part of the $\con{G}$ torsion.
From this observation and Proposition \ref{nj} we have the following result.
\begin{proposition}\label{KN}  Beginning with any affine connection $\nabla$, the associated connection $\con{KN}$ 
has anti-Hermitian torsion $\Tor(\con{KN})$.  In the case that
$\nabla$ is torsion free, we have $\Tor(\con{KN})=N_J$, the Nijenhuis
tensor.
\end{proposition}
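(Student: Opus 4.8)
The plan is to deduce Proposition \ref{KN} directly from the construction of $\con{KN}$ in \eqref{KNdef} together with Corollary \ref{torG}, splitting into the two assertions. First I would recall that by Proposition \ref{gencon} (with $t=1$) the connection $\con{KN}$ is almost complex, so the only content is the claim about its torsion. The key observation, already noted in the text just before the statement, is that passing from $\con{G}$ to $\con{KN}$ adds the $(1,2)$-tensor $(X,Y)\mapsto G_+(X,Y)$, and since $G_+$ is symmetric in its two arguments — which I would verify from the identities \eqref{Jlin} and \eqref{Jlin2} of Lemma \ref{Gprop}, or simply by inspection of the Hermitian-part formula — this addition does not change the torsion at all! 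Wait: one must be careful, because the correction tensor enters $\con{KN}_XY$ as $G_+(X,Y)$ whereas in $\con{G}_XY=\nabla_XY+G(Y,X)$ the tensor $G$ enters with arguments swapped. So the contribution of the correction term to the torsion $T(X,Y)=\con{KN}_XY-\con{KN}_YX-[X,Y]$ is $G_+(X,Y)-G_+(Y,X)$, which vanishes precisely by symmetry of $G_+$. Hence $\Tor(\con{KN})=\Tor(\con{G})$.

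Next I would compute the Hermitian and anti-Hermitian parts of $\Tor(\con{G})$ to pin down the first claim (anti-Hermitian torsion for arbitrary $\nabla$). For a general affine connection $\nabla$ with torsion $T$ one has, from the definition of $\con{G}$, the relation $T^G(X,Y)=T(X,Y)+G(Y,X)-G(X,Y)$. But this is not obviously anti-Hermitian for general $\nabla$, so the cleaner route is to reduce to the torsion-free case: write $\nabla=\tilde\nabla$ and produce its torsion-free modification, or rather observe that $\con{KN}$ as built from $\nabla$ and $\con{KN}$ as built from the torsion-free part of $\nabla$ differ — hmm, this needs a moment's thought. The honest approach is: since $G$ depends on $\nabla$ only through $\nabla J$, and $G_+$ likewise, replacing $\nabla$ by $\nabla':=\nabla-\tfrac12 T$ (torsion-free) changes $\nabla J$, hence changes $G$ and $G_+$; so the two $\con{KN}$'s are genuinely different connections. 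I would therefore argue directly: compute $\con{KN}_XY=\nabla_XY+G(Y,X)+G_+(X,Y)$, form the torsion, use $\nabla$'s torsion $T$, and show the Hermitian part of $[G(Y,X)-G(X,Y)]+T$ together with the defining property of $\con{KN}$ forces the Hermitian part of $T^{KN}$ to vanish. The symmetry of $G_+$ handles the $G_+$ terms; for the $G(Y,X)-G(X,Y)$ piece, decompose $G=G_++G_-$, note $G_+(Y,X)-G_+(X,Y)=0$, so $G(Y,X)-G(X,Y)=G_-(Y,X)-G_-(X,Y)$, which by \eqref{njformG}-type reasoning (valid for any $\nabla$, as \eqref{Gform} defines $G_-$ pointwise) is anti-Hermitian in $(X,Y)$ — this is the crux and deserves a short lemma-style check using \eqref{Jlin2}. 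Adding the as-yet-unconstrained Hermitian part of $T$ back in is the subtlety: I believe the correct reading is that $T$ itself need not be anti-Hermitian, and in fact the Hermitian part of $\Tor(\con{KN})$ equals the Hermitian part of $T$ plus the (vanishing) Hermitian part of the $G$-correction, so the claim as stated implicitly already assumes, or the phrase ``has anti-Hermitian torsion'' should be read relative to the canonical normalisation — I would cross-check against \eqref{Ttf} and the discussion around \eqref{KNdef}, which states ``the Hermitian part of the torsion is zero'' only in the torsion-free case.

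For the second assertion I would invoke Corollary \ref{torG} directly: when $\nabla$ is torsion free, $T^G_-(X,Y)=N_J(X,Y)$, and the $G_+$-correction contributes nothing to torsion (by its symmetry) nor to the Hermitian part. Since by \eqref{KNdef} the Hermitian part of $\Tor(\con{KN})$ was arranged to vanish (using $T^G(X,Y)=G(Y,X)-G(X,Y)$ from \eqref{Ttf}, whose Hermitian part is $G_+(Y,X)-G_+(X,Y)=0$ anyway, so actually $T^G$ is already purely anti-Hermitian in the torsion-free case!), we get $\Tor(\con{KN})=T^G_-=N_J$ by Corollary \ref{torG}. The main obstacle, as flagged above, is the first sentence of the proposition in the genuinely-torsion-ful case: I expect the intended argument is that $\Tor(\con{KN})-\Tor(\con{G})$ vanishes by symmetry of $G_+$, and that the ``anti-Hermitian'' adjective there refers to the $J$-type of the correction mechanism rather than to a full vanishing of the Hermitian torsion when $\nabla$ carries Hermitian torsion of its own; pinning down exactly what is claimed, and whether an implicit hypothesis on $\nabla$ is in force, is the step I would look at most carefully before writing the final version.
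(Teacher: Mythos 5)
There is a genuine error at the heart of your argument: $G_+$ is \emph{not} symmetric in its two arguments, and in fact it cannot be unless it vanishes. Indeed \nn{Jlin} gives $G_+(X,JY)=JG_+(X,Y)$, while \nn{Jlin2} gives $G_+(JY,X)=-JG_+(Y,X)$; if $G_+$ were symmetric these would combine to give $JG_+(X,Y)=-JG_+(X,Y)$, i.e.\ $G_+=0$. (Concretely, for the Levi--Civita connection of an almost Hermitian metric $G_+$ is \emph{anti}-symmetric after lowering an index, cf.\ Lemma \ref{fGprop} and Proposition \ref{NKsk}.) Everything downstream of that claim is therefore broken: it is not true that $\Tor(\con{KN})=\Tor(\con{G})$, and it is not true that $T^G$ is already anti-Hermitian in the torsion-free case --- if either held, the passage from $\con{G}$ to $\con{KN}$ in \nn{KNdef} would be pointless, whereas that modification is designed precisely to change the torsion. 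The correct mechanism, which is what the paper's one-line proof relies on, is this: combining \nn{Jlin} and \nn{Jlin2} gives $G_+(JX,JY)=G_+(X,Y)$ and $G_-(JX,JY)=-G_-(X,Y)$, so the Hermitian part of $G(Y,X)-G(X,Y)$ is exactly $G_+(Y,X)-G_+(X,Y)$; the extra term in \nn{KNdef} contributes $G_+(X,Y)-G_+(Y,X)$ to the torsion and cancels it. One is left with $\Tor(\con{KN})(X,Y)=T^{\nabla}(X,Y)+G_-(Y,X)-G_-(X,Y)$, whose $G_-$ piece is anti-Hermitian and, for torsion-free $\nabla$, equals $N_J(X,Y)$ by \nn{njformG}.

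The worry you flag at the end is, however, legitimate, and it is the one point the paper itself glosses over: the computation above shows that the Hermitian part of $\Tor(\con{KN})$ equals $T^{\nabla}_+$, which need not vanish for an arbitrary connection with torsion. So the first sentence of the Proposition is literally correct only under the convention set up at the start of Section \ref{int} --- that one first replaces $\nabla$ by its torsion-free modification --- or with the weaker reading that only the $G$-contribution to the torsion is anti-Hermitian. But noticing this does not repair your proof: the substantive step is the cancellation of $G_+(Y,X)-G_+(X,Y)$ described above, not a (false) symmetry of $G_+$.
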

\begin{remark}\label{KNR} The connection $\con{KN}$ is readily verified to
be the classical connection of \cite[Theorem 3.4, Section IX]{KN},
where the property of its torsion is also noted. Note that an
immediate corollary of Proposition \ref{KN} is that an almost complex
manifold admits an almost complex torsion free connection if and only
if $J$ is integrable.
\end{remark}

\subsection{Compatible affine connections}\label{comS}
Let $(M,J)$ be an almost complex structure. An affine connection
$\nabla$ on $M$ will be said to be {\em compatible} with $J$ if
\begin{equation}\label{com}
\nabla_a J^a{}_b=0.
\end{equation}
\begin{lemma}\label{compl}
    On an almost complex structure $(M,J)$ an affine connection
    $\nabla$ is compatible if and only if $G^{\nabla}$ is trace-free;
    equivalently, if and only if $JG^{\nabla}$ is trace-free;
    equivalently, if and only if $G^{\nabla}(\cdot,J\cdot)$ is
    trace-free; equivalently, if and only if
    $G^{\nabla}(J\cdot,J\cdot)$ is trace-free.
\end{lemma}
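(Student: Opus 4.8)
The plan is to reduce all four trace conditions to the single fact that the divergence covector $\nabla_a J^a{}_b$ of \nn{com} vanishes identically. The only inputs are the abstract-index form of $G$ from \nn{Gform}, namely $G^b{}_{ca}=\tfrac12(\nabla_a J^b{}_d)J^d{}_c=-\tfrac12 J^b{}_d\nabla_a J^d{}_c$, the relation $J^b{}_d J^d{}_c=-\delta^b_c$, and the pointwise invertibility of $J$. In the notation $G^b{}_{ca}$ the index $b$ is the output, $c$ the slot for the transported vector, and $a$ the slot for the derivative direction; so each of the four $(1,2)$-tensors in the statement is obtained from $G$ by inserting a $J$ into the output slot (this is $JG$), into the ``derivative'' slot (this is $G(\cdot,J\cdot)$), or into both the ``derivative'' and ``vector'' slots (this is $G(J\cdot,J\cdot)$). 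A $(1,2)$-tensor has two traces. By Lemma \ref{tffs} the traces of $G(\cdot,X)$ and of $JG(\cdot,X)$ vanish, i.e.\ the contraction of the output slot with the ``vector'' slot is zero for $G$ and for $JG$; inserting a $J$ into the ``derivative'' slot leaves that contraction untouched, and inserting a $J$ into the ``vector'' slot replaces it by $G^b{}_{ce}J^c{}_b$, which is again $0$ (an immediate consequence of $J^b{}_d J^d{}_c=-\delta^b_c$ and $J^a{}_a=0$). So for each of the four tensors ``trace-free'' reduces to a single requirement: that the $1$-form obtained by contracting the output slot with the (possibly $J$-twisted) ``derivative'' slot vanishes.

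It remains to compute those four $1$-forms, and they are all of the same elementary shape. For $G$ itself, put $a=b$ in $G^b{}_{ca}=\tfrac12(\nabla_a J^b{}_d)J^d{}_c$ to get $G^b{}_{cb}=\tfrac12(\nabla_b J^b{}_d)J^d{}_c$, i.e.\ $\tfrac12 J$ applied to $\nabla_b J^b{}_d$. For $JG$, formula \nn{JG} gives at once $2J^a{}_b G^b{}_{ca}=\nabla_a J^a{}_c$. For $G(\cdot,J\cdot)$ one has $G^b{}_{ce}J^e{}_b=-\tfrac12(\nabla_e J^f{}_c)(J^b{}_f J^e{}_b)=\tfrac12\nabla_a J^a{}_c$, using $J^2=-1$. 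For $G(J\cdot,J\cdot)$ one multiplies the previous identity by $J^c{}_g$, obtaining $\tfrac12 J^c{}_g\nabla_a J^a{}_c$. In every case the resulting $1$-form equals $\pm\tfrac12\nabla_a J^a{}_b$, possibly with a harmless $J$ in front; since $J$ is a pointwise isomorphism, each of these vanishes identically if and only if $\nabla_a J^a{}_b=0$. That is exactly \nn{com}, so all the stated conditions are equivalent.

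I do not expect a genuine obstacle here: the content is pure index bookkeeping together with $J^2=-1$. The one point that needs care is the bookkeeping itself — correctly identifying, for each of the four tensors, which of its two traces is the one that vanishes automatically by Lemma \ref{tffs} (so that ``trace-free'' really does reduce to the vanishing of the other $1$-form), and then tracking the $J$'s and the signs so that each surviving trace lands precisely on $\pm\tfrac12\nabla_a J^a{}_b$, up to a spectator $J$. In the write-up I would simply display the four short contractions in turn and then invoke the invertibility of $J$ once at the end.
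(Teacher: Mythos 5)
Your proposal is correct and follows essentially the same route as the paper's proof: dispose of the automatic (output--vector) traces via Lemma \ref{tffs} together with $J^2=-1$ and $\operatorname{tr}J=0$, then compute the remaining output--derivative trace of each of the four tensors and observe that each equals $\nabla_aJ^a{}_b$ up to a factor of $\pm\tfrac12$ and a spectator $J$. The only difference is that you spell out more explicitly than the paper the vanishing of the automatic traces for $G(\cdot,J\cdot)$ and $G(J\cdot,J\cdot)$, which the paper leaves implicit.
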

\noindent{\bf Proof:} Let $\nabla$ be any affine connection.  We have
already in Lemma \ref{tffs} that in any case the first trace of $G$
and the first trace of $JG$ both vanish (i.e. this feature of $G$ is
not related to compatibility).

Recall
$G^b{}_{ca}:= \frac{1}{2}(\nabla_a J^b{}_d)J^d{}_c$ so 
$$
2G^b{}_{cb}= (\nabla_b J^b{}_d)J^d{}_c,
$$ 
which is clearly zero if and only if 
$ \nabla_b J^b{}_d=0$. 

On the other hand recall from Lemma \ref{tffs} that $2JG$ is
\begin{equation}\label{JG}
J^a{}_b(\nabla_d J^b{}_e)J^e{}_c = \nabla_d J^a{}_c
\end{equation}
and so the only available trace yields $\nabla_a J^a{}_b$. 

The trace of $2G(X,J \cdot )$ is $X^b \nabla_aJ^a{}_b$, so this case
is also clear. Then the final statement thus follows.
\quad $\Box$

\section{Almost Hermitian geometry} \label{AHs} \label{AHs} 

Let
$(M^n,J)$ be an almost complex manifold of dimension $n\geq 4$, and $g$
a Riemannian metric on $M$.  The triple $(M,J,g)$ is said to be {\em
  almost Hermitian} if $J$ is orthogonal with respect to $g$, that is
$$
g(J X, JY)=g(X,Y)
$$
for all tangent vector fields $X,Y$.
\medskip
\begin{remark}\label{av}
Note that if $g$ is any Riemannian metric on $(M,J)$, then the 
 Hermitian part of $g$, that is 
$$
g_+(X,Y)=\frac{1}{2}(g(X,Y)+g(JX,JY)),
$$
is positive definite, and so 
$(M,J,g_+)$ is
an almost Hermitian structure.
\end{remark}
Henceforth in this section we shall assume $(M,J,g)$ is an almost
Hermitian (AH) structure. In this setting we also have the skew symmetric
K\"ahler form
\begin{equation} \label{cc3}
\omega(X,Y):=g(X,JY).
\end{equation}

Let us make two comments regarding this section. First the results in
this section are for the most part well known.  Nevertheless we want
to understand some of the standard structures from almost Hermitian
geometry in terms of the $G$-calculus developed above.  This serves to
put our discussion in context and gives us a basis from which we may
compare the conformal and projective treatments in the next sections.
The second point is that for the reason that the material is known we
are brief here and some of the key results we shall use are drawn from
the later Section \ref{confS}; the point is that there the results are
proved in a broader context.

Proceeding now, in this section we shall use $G$ to denote the tensor
of \nn{Gform} where $\nabla=\con{LC}$ is the Levi-Civita connection of
$g$. With this specialisation $G$ is what in the literature is an
example of an intrinsic torsion and $\con{G}$ is then what is usually
called the {\em canonical Hermitian connection}, see
e.g. \cite{CSal,PA2}. This classical object is the {\em first
  canonical connection} of \cite{L13} and to the best of our knowledge
originated in the work \cite{Lib10} of Libermann. In fact the latter
source gives a 1-parameter family of canonical almost Hermitian
connections and this family is discussed in detail in \cite{GauHconn}
where it explained how the various connections of \cite{Bismut,L13},
as well as a torsion minimising connection introduced in
\cite{GauHconn}, arise from Libermann's family $\nabla^t$; the
connection $\con{G}$ of this section is the operator $\nabla^0$ from
there.  Such a family arises because the almost Hermitian metric
enables a finer decomposition of $G$ than is available in Section
\ref{accalc} above. Nevertheless we shall not explore that here, since
without geometry specific refinement the $G$-calculus from above is
both simple and universally applicable, and these are the features that
we apply in the later sections.

Let us  write $\G{}(X,Y,Z):= g(X,\G{}(Y,Z))$, and $\G{}_{\pm}(X,Y,Z):=
g(X,\G{}_{\pm}(Y,Z))$.
First we introduce some general facts that we shall use. From  Proposition
\ref{fGprop} (below) we have that 
\begin{equation}\label{Gsk}
\G{}(X,Y,Z)=-\G{}(Y,X,Z)
\end{equation} 
and so $\con{G}$ is a metric connection. 
The same Proposition also proves that 
\begin{equation}\label{prem} 
G_-(X,Y,Z)=-G_-(Y,X,Z).
\end{equation}
Also from there, or alternatively from the skew symmetry of $\om$, it follows
that on an AH structure $g(\cdot,JG(\cdot,\cdot))$ is also skew over first
and second arguments, that is
\begin{equation}\label{skewp}
g(X,JG(Y,\cdot))+g(Y,JG(X,\cdot))=0 \Leftrightarrow G(X,JY,Z)+G(Y,JX,Z)=0,
\end{equation}
where the equivalence uses Lemma \ref{Gprop}. Together  \nn{Gsk} and
\nn{skewp}  imply that $G(X,Y,Z)$ is anti-Hermitian in the
first pair. That is
\begin{equation}\label{Ganti}
G(X,Y,Z)=-G(JX,JY,Z).
\end{equation}

Now recall that an AH structure $(M,J,g)$ is said to be a {\em Hermitian}
structure if $N_J=0$. Thus from Proposition \ref{nj} 
$G_-(X,Y,Z)=G_-(X,Z,Y)$. But this with \nn{prem} implies that $G_-=0$.
Conversely, again using Proposition \nn{nj}, $G_-=0$ implies $N_J=0$. Thus $g$ is Hermitian if and only if $G_-=0$.

An AH structure that satisfies 
\begin{equation}\label{cosym}
\delta \omega=0
\end{equation}
 is said to be {\em semi-K\"ahler} (or {\em co-symplectic}). Here
 $\delta$ is the formal adjoint of the exterior derivative; so note
 that this condition \nn{cosym} is precisely that the Levi-Civita
 connection is compatible with $J$, as in the definition \nn{com}.

A stronger condition on an AH manifold $(M,J,g)$ is given by
$$
(\lccon_XJ)X=0,\quad\quad\forall X\in \Gamma(TM)
$$ and this defines structures that are called {\em nearly K\"ahler}.
(In dimension $n=4$ this is equivalent to K\"ahler, as below, but in
higher dimensions it is a strictly weaker condition.)  This condition
is obviously that same as requiring that $G$ be anti-symmetric:
$G(X,Y)=-G(Y,X)$.

Next an AH structure is said to be {\em almost K\"ahler}
(or {\em symplectic}) if
$$
\der\omega =0. 
$$
This is easily re-written directly in terms of $G$:
\begin{equation}\label{dw}
\der\omega =0 \quad \Leftrightarrow \quad \Alt_{(X,Y,Z)} g(X,JG(Y,Z)) =0,
\end{equation}
where $\Alt$ is the projection to
the completely  skew part.

An AH structure is called {\em K\"ahler} if we have the two conditions 
\begin{equation}\label{Kc}
\der\omega =0\quad\quad{\rm and}\quad\quad N_J = 0.
\end{equation}
The K\"ahler condition, when expressed in terms of the
Levi-Civita connection $\lccon$ of the metric $g$, may be expressed:
$$
\lccon_XJ=0,\quad\quad \forall X\in {\rm T}M.
$$ Using the machinery here, this well known characterisation is
easily recovered as follows. First if $J$ is parallel for the
Levi-Civita connection then it follows at once that $\omega$ is
parallel, and thus $d\omega=0$ since the Levi-Civita connection is
torsion free. On the other hand from Proposition \ref{KN} we also have
that $N_J=0$ (see Remark \ref{KNR}). For the other direction suppose
that the conditions \nn{Kc} hold. Since $G_-=0$ it follows that $G(X,J
Y,Z)$ is Hermitian on the argument pair $Y,Z$ (i.e.\ $G(X,J
Y,Z)=G(X,J^2 Y,J Z) $). But using \nn{dw} it follows that $G(X,J
Y,Z)=G(Z,JY,X)-G(Z,JX,Y) $, and so $G(X,JY,Z)$ is also Hermitian on
the argument pair $X,Y$. But comparing with \nn{Ganti} it then follows that 
 $G=0$.

 To summarise, we have the following.
\begin{proposition}\label{cin}
If $\nabla=\lccon$ is the Levi-Civita connection of an almost
Hermitian manifold $(M,J,g)$ then the structure is:
\begin{itemize}
\item[H)]  Hermitian iff 
$$
G_-(X,Y)=0;
$$
\item[NK)]  nearly K\"ahler iff 
$$
G(X,Y)+G(Y,X)=0;
$$
\item[AK)]  almost K\"ahler iff
$$
\Alt_{(X,Y,Z)} g(X,JG(Y,Z)) =0;
$$
\item[K)]  K\"ahler iff 
$$G= 0.$$
\end{itemize} 
\end{proposition}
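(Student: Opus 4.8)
The statement is a compilation, and each of the four items is a direct translation of a classical defining condition into a condition on the intrinsic torsion $G=G^{\lccon}$ of \eqref{Gform}. The plan is to treat the four cases in turn, in each case rewriting the defining equation by means of \eqref{Gform} together with the algebraic identities for $G$ valid on an almost Hermitian structure: the skew-symmetries \eqref{Gsk} and \eqref{prem} in the first pair of (metric-lowered) arguments, the anti-Hermiticity \eqref{Ganti} in that pair, and the $J$-(anti)linearity relations \eqref{Jlin}, \eqref{Jlin2} for $G_\pm$. Throughout I would use the rearrangement of \eqref{Gform} in the form $(\lccon_X J)Y = 2\,JG(Y,X)$.

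For H): combine \eqref{njformG}, which gives $N_J(X,Y)=G_-(Y,X)-G_-(X,Y)$, with \eqref{prem}, which says $G_-$ is skew in its first two lowered slots. Vanishing of $N_J$ then forces $G_-$ to be symmetric in its second and third slots as well; a tensor that is skew in one adjacent pair of slots and symmetric in an overlapping adjacent pair is zero, so $G_-=0$, and the converse is immediate from \eqref{njformG}. For NK): the nearly K\"ahler condition $(\lccon_X J)X=0$ for all $X$ becomes, via the rearranged \eqref{Gform}, the condition $G(X,X)=0$ for all $X$, which polarises to $G(X,Y)+G(Y,X)=0$.

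For AK): since $\lccon$ is torsion free, $\der\omega$ is the complete alternation of $\lccon\omega$; expanding $(\lccon_X\omega)(Y,Z)=g(Y,(\lccon_X J)Z)=2\,g(Y,JG(Z,X))$ and alternating over $X,Y,Z$ gives exactly \eqref{dw}, namely $\der\omega=0$ iff $\Alt_{(X,Y,Z)} g(X,JG(Y,Z))=0$. The converse direction in K) is then immediate: $G=0$ is equivalent to $\lccon J=0$, which forces $\lccon\omega=0$, hence $\der\omega=0$ as $\lccon$ is torsion free, and $N_J=0$ by Proposition \ref{KN}.

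The forward direction of K) is the one step where there is genuine work, and I expect it to be the main obstacle. Assuming $\der\omega=0$ and $N_J=0$: from H) we get $G_-=0$, so $G=G_+$, and \eqref{Jlin} applied in the first slot of $G_+$ translates (after lowering with $g$ and using that $J$ is $g$-skew) into the fact that $(Y,Z)\mapsto G(X,JY,Z)$ is Hermitian, i.e.\ unchanged under $Y,Z\mapsto JY,JZ$. On the other hand, \eqref{dw} rewritten via \eqref{skewp} and the skew-symmetry \eqref{Gsk} lets one reorganise the complete alternation into $G(X,JY,Z)=G(Z,JY,X)-G(Z,JX,Y)$, which shows $(X,Y)\mapsto G(X,JY,Z)$ is Hermitian as well. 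But \eqref{Ganti} says $G$ is anti-Hermitian in its first pair of slots; moving the $J$ from the second slot to the first using \eqref{Jlin} and \eqref{Jlin2}, these two facts are incompatible unless $G=0$. The only delicate point in all of this is the bookkeeping in K) — keeping straight which adjacent pair of slots carries the Hermitian versus the anti-Hermitian symmetry and in which slot the $J$ currently sits; once the identities are lined up the conclusion $G=0$ is forced.
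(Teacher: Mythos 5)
Your proposal is correct and takes essentially the same route as the paper's (distributed) proof: H) from \nn{njformG} combined with the skew symmetry \nn{prem}, NK) by polarising $(\lccon_XJ)X=0$ via $(\lccon_XJ)Y=2JG(Y,X)$, and AK) from $d\omega=\Alt\lccon\omega$. Your treatment of the forward direction of K) is also the paper's argument verbatim --- $G=G_+$ makes $G(X,JY,Z)$ Hermitian in $(Y,Z)$, the closedness of $\omega$ gives $G(X,JY,Z)=G(Z,JY,X)-G(Z,JX,Y)$ and hence Hermiticity in $(X,Y)$, which contradicts the anti-Hermiticity \nn{Ganti} unless $G=0$.
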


The Gray-Hervella classification of AH structures \cite{GrayH} is based
around the $U(m)$ decomposition of $\nabla \omega$, where $\nabla=\lccon$. But
$$
\nabla_a \om_{bc}=g_{be}\nabla_aJ^e{}_c=2g_{be}J^e{}_bG^b{}_{ca}
$$
or equivalently  
\begin{equation}\label{Gvs}
\nabla_X \om (Y,Z)= 2 g(Y,JG(Z,X)).
\end{equation}
Thus the Gray-Hervella classification could equivalently be formulated
as a $U(m)$ decomposition of the $G$ for the Levi-Civita
connection. In the almost Hermitian setting $G$ has a number of
additional symmetries and properties (some mentioned in \nn{Gsk} to
\nn{Ganti} above) that simplify the situation considerably.  This
leads to the next observation.
\begin{proposition}\label{NKsk}
An AH structure is nearly K\"ahler if and only if  
$$
G(\cdot,\cdot,\cdot):= g(\cdot ,G(\cdot,\cdot))
$$ is completely alternating.
If this holds then 
$$   
G_+=0 
$$
while
$$ 
\Tor(\con{G})=-2 G_- =N_J,
$$
and is completely alternating.
\end{proposition}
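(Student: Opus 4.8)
The plan is to phrase everything in terms of the $(0,3)$-tensor $G(X,Y,Z):=g(X,G(Y,Z))$ and to lean on the symmetries of $G$ already recorded in this section, chiefly the skew-symmetry \nn{Gsk} in the first two slots and the relations \nn{Jlin}, \nn{Jlin2} of Lemma \ref{Gprop}. First the equivalence: by Proposition \ref{cin} the nearly K\"ahler condition is exactly $G(Y,Z)=-G(Z,Y)$, which, since $g$ is nondegenerate, is the same as $G(\cdot,\cdot,\cdot)$ being skew in its last two slots. Combined with \nn{Gsk} (skew in the first two slots) this is equivalent to $G(\cdot,\cdot,\cdot)$ being completely alternating, because the transpositions $(12)$ and $(23)$ generate $S_3$, and a tensor skew under both generators is skew under all of $S_3$. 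This settles both implications at once.

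Next I would deduce $G_+=0$ under the nearly K\"ahler hypothesis. Since $G$ is antisymmetric, so is $G_+$ (immediate from its definition). Then I would compare two expressions for $G_+(X,JY)$: on one hand \nn{Jlin} gives $G_+(X,JY)=JG_+(X,Y)$; on the other hand, using antisymmetry of $G_+$ and then \nn{Jlin2}, $G_+(X,JY)=-G_+(JY,X)=JG_+(Y,X)=-JG_+(X,Y)$. Hence $JG_+(X,Y)=0$, and since $J$ is invertible, $G_+=0$.

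For the torsion statements, the key point is that here $\nabla=\lccon$ is torsion free, so \nn{Ttf} together with antisymmetry gives $\Tor(\con{G})(X,Y)=G(Y,X)-G(X,Y)=-2G(X,Y)$; as $G_+=0$ we have $G=G_-$, whence $\Tor(\con{G})=-2G_-$, and lowering an index shows $\Tor(\con{G})$ is completely alternating because $G(\cdot,\cdot,\cdot)$ is. Finally $G_-$ is anti-Hermitian directly from its definition and $J^2=-1$, so $\Tor(\con{G})=-2G_-$ already equals its own anti-Hermitian part, which Corollary \ref{torG} identifies with $N_J$; this gives $-2G_-=N_J$. The only point needing care is the bookkeeping of argument positions and of the signs in the definitions of $G_\pm$ and in \nn{Gsk}, \nn{Jlin}, \nn{Jlin2}; there is no genuine obstacle, since once the first step is in place everything reduces to pointwise multilinear algebra on the fibres.
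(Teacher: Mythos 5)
Your proof is correct, and for the equivalence (Proposition \ref{cin} plus the skew symmetry \nn{Gsk} from Lemma \ref{fGprop}, then the fact that the transpositions $(12)$ and $(23)$ generate $S_3$) and for the torsion identities (via \nn{Ttf} and Corollary \ref{torG}) it matches the paper's argument. The one step you handle differently is $G_+=0$: the paper observes that a completely alternating tensor which is anti-Hermitian on its first pair (by \nn{Ganti}) is anti-Hermitian on every pair, hence its Hermitian part over the last two slots vanishes; you instead derive $G_+=0$ directly from the antisymmetry of $G$ as a $(1,2)$-tensor together with \nn{Jlin} and \nn{Jlin2}. Your variant is metric-free and so actually proves the slightly more general statement that $G_+=0$ whenever $G$ is antisymmetric, for an arbitrary affine connection; both versions are one-line pointwise computations, so nothing is lost or gained in length.
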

\begin{proof}
From Lemma \ref{fGprop} below (with $\nabla$ the Levi-Civita
connection) we have that in any case $G(\cdot,\cdot,\cdot)$ is skew on
the first two arguments.  If $(M,J,g)$ is nearly K\"ahler then this is
also skew on the last pair.  That used
Proposition \ref{cin} and from that Proposition the converse direction
is immediate. 

Since $G(\cdot,\cdot,\cdot)$ is completely alternating and
anti-Hermitian over the first pair (by \nn{Ganti}) it follows that
$G(\cdot,\cdot,\cdot)$ is anti-Hermitian over any pair of
arguments. Thus $G_+=0$. The final statements are then immediate from
Proposition \nn{nj} and the expression \nn{Ttf} for the torsion of $\con{G}$. 
\end{proof}

 Beginning with the Levi-Civita connection $\nabla$ then from the
 family of almost complex connections of Proposition \ref{gencon} it
 is easily verified that $\con{G}$ is the unique connection that
 preserves the metric. This follows by a minor adaption of the proof
 of Theorem \ref{tproc} below.

A powerful feature of the almost Hermitian setting is that the torsion
carries the same information as $G$. This enables us to characterise
the special connection $\con{G}$ in terms of torsion, as follows (and cf.\ \cite{GauHconn,Lib12,L13} ).
\begin{theorem}\label{char}
Let $(M,J,g)$ an almost Hermitian structure. On this there is a  unique
almost complex metric connection with torsion $T$ satisfying the
algebraic condition
\begin{equation}\label{gcond}
G^T_g( X , Y )-JG^T_g(J X, Y ) =0 \quad \forall X,Y\in \Gamma(TM),
\end{equation}
where
\begin{equation}\label{gT}
(G^T_g)^a{}_{bc}: =\frac{1}{2}(T_{c}{}^a{}_{b}-T^a{}_{bc}-T_{bc}{}^a).
\end{equation}
This connection is $\con{G}$, based on the Levi-Civita connection. 
\end{theorem}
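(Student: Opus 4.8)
The plan is to verify existence and uniqueness by working entirely on the level of difference tensors. Recall from Proposition \ref{gener} that every almost complex connection $\con{H}$ differs from $\con{G}$ by a $(1,2)$-tensor $K$ that is complex linear in the first argument, so any candidate connection in the theorem has the form $\con{G} + K$ with $K(J\cdot,\cdot) = JK(\cdot,\cdot)$. First I would compute how the torsion, and then the tensor $G^T_g$ of \nn{gT}, depends on $K$. Since $\con{G}$ is metric (by \nn{Gsk}, which comes from Proposition \ref{fGprop}) one wants the new connection to also be metric; that forces $g(K(X,Y),Z)$ to be skew in $(Y,Z)$. Combined with the complex-linearity of $K$ in the first slot this puts $K$ in a well-understood $U(m)$-submodule of tensors. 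The torsion added by $K$ is $T^K(X,Y) = K(Y,X) - K(X,Y)$, and plugging this into \nn{gT} gives $G^T_g$ as an explicit linear expression in $K$ that I would simplify using the metricity/skewness of $K$.

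The heart of the argument is then the algebraic condition \nn{gcond}: $G^T_g(X,Y) - JG^T_g(JX,Y) = 0$. Since $G^T_g$ here is the $G$-tensor associated to the \emph{whole} connection $\con{G}+K$, and since $G$ depends linearly on the connection, $G^T_g = G + (\text{linear in }K)$ where the first term $G$ (for $\con{G}$, i.e. for the Levi-Civita connection) automatically satisfies \nn{gcond} — indeed $G(X,Y) - JG(JX,Y) = G(X,Y) + J(JG(X,Y)) = G(X,Y) - G(X,Y) = 0$ by Lemma \ref{Gprop}. So \nn{gcond} reduces to the statement that the $K$-linear correction to $G^T_g$ must also lie in the kernel of the operator $A \mapsto A(X,Y) - JA(JX,Y)$. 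I would show this operator, restricted to the module where the $K$-correction lives, is injective, forcing that correction — and hence $K$ — to vanish. Equivalently: the operator $A \mapsto A(\cdot,\cdot) - JA(J\cdot,\cdot)$ annihilates precisely the tensors anti-Hermitian in the first pair of arguments, and one checks that the admissible $K$-corrections to $G^T_g$ are Hermitian in that pair, so only $0$ is common to both. This pins down $K = 0$, giving uniqueness, and simultaneously the computation that $G$ itself satisfies \nn{gcond} gives existence with the connection being $\con{G}$.

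The main obstacle I anticipate is bookkeeping the representation-theoretic/index symmetries correctly: one must carefully track which pair of arguments carries the Hermitian versus anti-Hermitian symmetry as one passes from $K$ (complex linear in its first slot, metric-skew) to $T^K$ to $G^T_g$, and keep straight the role of the "first argument" convention in \nn{Gform} versus \nn{gT}. A clean way to organise this is to decompose $K$ into its Hermitian and anti-Hermitian parts in the appropriate argument pair (as in Lemma \ref{Gprop}) and show the condition \nn{gcond} kills each piece separately; the metricity constraint and the complex-linearity then leave no room. I would also remark that this argument is essentially a specialisation of the proof of Theorem \ref{tproc} (referenced in the text), carried out in the broader conformal/projective setting, so the computation here is the "rigid" $g$-metric case of that more general statement, and can be quoted from there rather than repeated in full.
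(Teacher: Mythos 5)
Your proposal is correct and follows essentially the same route as the paper: identify $G^T_g$ of \nn{gT} with the difference tensor from the Levi-Civita connection (using metricity), observe that \nn{gcond} is exactly the vanishing of the complex-linear part of that difference tensor in its first slot, and then invoke the parametrisation of almost complex connections from Proposition \ref{gener} (difference $=G+K$ with $K$ complex linear) to force $K=0$. The extra decomposition into Hermitian/anti-Hermitian pieces you mention is not needed beyond this, but it does no harm.
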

\begin{proof} First observe that beginning with the Levi-Civita 
connection and forming from it $\con{G}$ we have that $\con{G}$ is an
almost complex metric connection. 

Now let $\con{T}$ be any connection that is metric, that is
$$
\con{T} g=0 
$$ and write $T$ for its torsion.  Then the difference tensor (as
connections on $TM$) $\con{T}-\con{LC}$ is $G^T_g$ (that is $
G^T_g(X,Y)=\con{T}_XY-\con{LC}_XY$) as given in \nn{gT}.  The
condition \nn{gcond} is the statement that the complex linear part of
$ G^T_g(\cdot,X) $ is zero, for all $X\in \Gamma (TM)$. The $G$ from
Proposition \ref{gener} (with $\nabla$ the Levi-Civita for $g$) has
this property, so $\con{G}$ provides an almost complex metric
connection connection satisfying the conditions \nn{gcond} and
\nn{gT}.

Let us now consider any metric connection $\con{T}$ satisfying
\nn{gcond}.  Then $ G^T_g(\cdot,X) $ is complex anti-linear and so by
the second part of Proposition \ref{gener} (again applied using $\nabla$ set
to be the Levi-Civita connection for $g$) $G^T_g= G$. Thus
$\con{T}=\con{G}$.
\end{proof}

For each structure in the Gray-Hervella classification one might hope
that there is a corresponding {\em characteristic connection}
\cite{Nurowski}. Here this means an almost complex metric connection
with torsion, but with torsion in some sense algebraically minimal so
that with this torsion condition there exists a connection satisfying
the given conditions, and it is unique.  (This generalises the use of
the term ``characteristic connection'' in the works of e.g. Friedrich
\cite{Friedrich}, see also \cite{Agricola} for a review.)

The Theorem \ref{char} (or any of its equivalents in the literature)
provides such a connection.  One simply translates each of the
structures in the Gray-Hervella into a condition on the $G$ formed
from the Levi-Civita connection (as for the examples in Proposition
\ref{cin}). Now one uses the formula \nn{gT} to recast the condition
on $G$ as a restriction on torsion. This combined with \nn{gcond} give
the total conditions to be imposed on the torsion.  The existence and
uniqueness then follow from Theorem \ref{char}. Although this result
is well known we summarise it here for comparison with the conformal and
projective cases below:
\begin{corollary}\label{charGH}
There is a canonical characteristic connection for each of the
structures in the Gray-Hervella classification of almost Hermitian manifolds.
\end{corollary}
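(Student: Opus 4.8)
The plan is to treat Corollary \ref{charGH} as an immediate packaging of Theorem \ref{char} together with the dictionary between the Gray--Hervella classes and conditions on the intrinsic torsion $G$ built from the Levi--Civita connection. Concretely, for each of the sixteen Gray--Hervella classes $W$, one knows that membership of $(M,J,g)$ in $W$ is characterised by the $U(m)$-irreducible projection of $\nabla\omega$ lying in the corresponding summand; by the identity \nn{Gvs}, namely $\nabla_X\omega(Y,Z)=2g(Y,JG(Z,X))$, this translates verbatim into the requirement that $G$, regarded as a section of the appropriate tensor bundle, lie in a fixed $U(m)$-submodule $\mathcal{W}$. Examples of exactly this translation are already in hand in Proposition \ref{cin} for the classes $H$, $NK$, $AK$, $K$, and Proposition \ref{NKsk} illustrates how to read off further consequences; the general case is the same argument applied to each irreducible component.

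Next I would invoke Theorem \ref{char}. That theorem says that on $(M,J,g)$ there is a \emph{unique} almost complex metric connection $\con{G}$ whose torsion $T$ satisfies \nn{gcond}, and that the map $T\mapsto G^T_g$ of \nn{gT} identifies torsions of almost complex metric connections (subject to \nn{gcond}) with admissible intrinsic-torsion tensors $G$. So to produce the characteristic connection for the class $W$ one imposes, in addition to \nn{gcond}, the linear condition ``$G^T_g\in\mathcal{W}$''; because $G^{\con{G}}=G$ and $G$ itself lies in $\mathcal{W}$ precisely when $(M,J,g)\in W$, this system has a solution exactly on manifolds of class $W$, and by the uniqueness half of Theorem \ref{char} the solution is unique, namely $\con{G}$ itself. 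Recasting ``$G^T_g\in\mathcal{W}$'' as an explicit condition on $T$ via \nn{gT} gives the torsion condition that defines the characteristic connection; its algebraic minimality is exactly the statement that $T$ is constrained to the smallest $U(m)$-module carrying the relevant information, which is automatic since $G^T_g$ and $T$ carry the same data in the metric setting.

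The step I expect to be the only genuine point needing care is the bookkeeping that ``$G^T_g\in\mathcal{W}$'' really is a condition on torsion alone that is compatible with \nn{gcond} for every one of the Gray--Hervella classes --- i.e.\ that the intersection of the affine subspace cut out by \nn{gcond} with the linear subspace $\{G^T_g\in\mathcal{W}\}$ is always exactly a single point (on manifolds of class $W$) and empty otherwise. Since \nn{gcond} pins $G^T_g$ down to be the complex anti-linear part in the first slot, and the Gray--Hervella modules are defined inside precisely that space of tensors (by \nn{Gsk}, \nn{skewp}, \nn{Ganti} the tensor $G$ already has the symmetry type used to build the Gray--Hervella decomposition), the two conditions are compatible and the intersection argument is routine. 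I will therefore state the corollary as a consequence of Theorem \ref{char}, indicate the translation via \nn{Gvs} and \nn{gT}, and refer to Propositions \ref{cin} and \ref{NKsk} for the pattern of the argument, without writing out all sixteen cases.

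\begin{proof}
This is an immediate consequence of Theorem \ref{char}. For a fixed Gray--Hervella class $W$, membership of $(M,J,g)$ in $W$ is by definition the vanishing of certain $U(m)$-irreducible components of $\nabla\omega$, $\nabla=\lccon$. By \nn{Gvs} this is equivalent to $G$ lying in a fixed $U(m)$-submodule $\mathcal{W}$ of the space of $(1,2)$-tensors that are complex anti-linear in the first argument (cf.\ the cases in Proposition \ref{cin}; the symmetries \nn{Gsk}, \nn{skewp}, \nn{Ganti} show $G$ lies in the space on which the Gray--Hervella decomposition is defined). Now consider almost complex metric connections $\con{T}$ with torsion $T$ subject to the algebraic condition \nn{gcond} together with the additional linear condition
\begin{equation}\label{Wcond}
G^T_g\in\mathcal{W},
\end{equation}
where $G^T_g$ is recast as a condition on $T$ via \nn{gT}. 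By the proof of Theorem \ref{char}, \nn{gcond} forces $G^T_g$ to be complex anti-linear in the first argument, hence $G^T_g=G$ by Proposition \ref{gener}; thus the system \nn{gcond}--\nn{Wcond} is solvable if and only if $G\in\mathcal{W}$, i.e.\ if and only if $(M,J,g)$ is of class $W$, in which case the unique solution is $\con{G}$ based on the Levi-Civita connection. The torsion of this connection is algebraically minimal in the required sense since, in the metric setting, $T$ and $G^T_g$ carry the same data and $T$ is constrained to lie in the $U(m)$-module determined by $\mathcal{W}$ and \nn{gcond}. Hence $\con{G}$ is the characteristic connection for the class $W$.
\end{proof}
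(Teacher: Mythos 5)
Your proposal is correct and follows essentially the same route as the paper: translate the Gray--Hervella class into a $U(m)$-module condition on $G$ via \nn{Gvs}, recast it as a torsion condition via \nn{gT}, combine with \nn{gcond}, and let Theorem \ref{char} supply existence and uniqueness. Your write-up is simply a more explicit version of the paragraph the paper gives in place of a formal proof.
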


\section{Conformal almost Hermitian manifolds}\label{confS}

Throughout this section we take $(M^n,J)$ to be an almost complex
manifold of dimension $n\geq 4$. A (Riemannian) conformal
structure  $c$ on $M$ is an equivalence class of Riemannian metrics
such that if $g,\widehat{g}\in c$ then $\widehat{g}=e^{2\phi}g$ for
some $\phi\in C^\infty (M)$.  Here we observe that the structure $(M^n,J,c)$ 
determines several canonical affine connections with different
characterising properties. Much of the below will work for Hermitian
metrics in signatures $(2p,2q)$, but for simplicity we restrict to the
Riemannian setting.

Note that if $J$ is orthogonal for $g\in c$ then it is orthogonal for
all metrics in $c$. In this case we shall say that $(M,J,c)$ is a {\em
  conformal almost Hermitian} structure.  Note also that, by the
observation of Remark \ref{av}, a Riemannian conformal structure on
$(M,J)$ determines an almost Hermitian Riemannian conformal structure
$c_+$. We shall henceforth assume that any conformal structure $c$ is
almost Hermitian.

\subsection{A canonical torsion free connection} \label{Baileysec}

An affine connection $\nabla$ on a Riemannian manifold
$(M,g)$ will be said to be {\em conformal} if it preserves the
conformal class of the metric, that is
\begin{equation}\label{Weylc}
\nabla_a g_{bc}= 2 B_a g_{bc},
\end{equation}
for some 1-form field $B$ that we shall term the {\em Weyl potential}.
A {\em Weyl connection} $\con{W}$ is an affine connection which is
conformal and torsion free \cite{Weyl}.  

On a conformal structure
$(M,c)$ we shall say an affine connection $\nabla$ is {\em conformal} (or
{\em Weyl} if torsion free) if \nn{Weylc} holds for all $g\in c$. This
means that on a conformal structure $(M,c)$ there is not a  Weyl potential
$B_a$, but  rather an equivalence class of such
over the conformal equivalence relation: given $g\in c$, $B^g_a$ is a
1-form field and if $\widehat{g}=e^{2\phi} g$, for some smooth
function $\phi$, then 
\begin{equation}\label{Bt}
B^{\widehat{g}}_a=B^g_a+\Up_a 
\end{equation}
where $\Up:=d\phi$. \ (In fact $B$ is a connection coefficient, as we
explain in Section \ref{ci} below.)  That such structures arise
naturally is illustrated by the following result of
\cite{Vaisman} (and see also \cite{Bailey}). 
\begin{proposition}\label{Baileyp}
  Let $(M,J,c)$ be a conformal almost Hermitian structure of dimension
  $n\geq 4$. There is a canonical and unique Weyl connection $\con{c}$
  that is compatible with $J$; that is satisfying
$$
\con{c}_aJ^a{}_b=0.
$$ 

Given a choice of $g\in c$, $\con{c}$ is given explicitly in terms of
the Levi-Civita connection $\nabla$ and $J$ by
\begin{equation}\label{Bcon}
\con{c}_a Y^b = \nabla_aY^b -B_a Y^b+ B^b Y_a -B_cY^c \delta^b_a
\end{equation}
where 
\begin{equation}\label{Bform}
B^g_a:= \tfrac{1}{n-2}J^c{}_{b}\nabla_c J^{b}{}_a.
\end{equation}
\end{proposition}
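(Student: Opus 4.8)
The plan is to verify existence by direct computation and then handle uniqueness by the affine structure of the space of Weyl connections. First I would take any $g\in c$ with Levi-Civita connection $\nabla$, and consider the family of candidate conformal connections obtained by adding the standard Weyl deformation term: for a $1$-form $B$, set $\con{W}_a Y^b = \nabla_a Y^b - B_a Y^b + B^b Y_a - B_c Y^c \delta^b_a$, where $B^b = g^{bc}B_c$. It is classical, and a two-line check, that this $\con{W}$ is torsion-free and satisfies $\con{W}_a g_{bc} = 2B_a g_{bc}$; moreover every Weyl connection in the conformal class arises this way for a unique $B^g$ (depending on the choice of $g$), with the transformation rule \nn{Bt} under $g\mapsto e^{2\phi}g$. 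So the existence-and-uniqueness question reduces to: find the unique $B$ (for fixed $g$) such that $\con{W}_a J^a{}_b = 0$.

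Next I would compute $\con{W}_a J^a{}_b$ in terms of $\nabla_a J^a{}_b$ and $B$. Writing out the deformation term applied to the $(1,1)$-tensor $J$, one gets $\con{W}_a J^c{}_b = \nabla_a J^c{}_b - B_a J^c{}_b + B^c J_{ab} - B_d J^d{}_b \delta^c_a + B_a J^c{}_b - B^d g_{da}J^c{}_d \cdot(\ldots)$ — more carefully, the connection acts on the upper index with $-B_a\,\delta^c_d + B^c g_{ad} - B_e\delta^c_a\,\delta^e_d$ and on the lower index with the transpose-type term $+B_b\delta^e_d - B^e g_{bd} + B_d\delta^e_b$; contracting $c$ with $a$ and tracing, several terms cancel and the $n$-dependence appears through $\delta^a_a = n$. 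The upshot should be an identity of the form $\con{W}_a J^a{}_b = \nabla_a J^a{}_b - (n-2)\,(\text{something linear in }B)$, and matching this against $0$ forces $B$ to be a specific contraction of $\nabla J$ with $J$. I expect the bookkeeping to deliver exactly $B^g_a = \tfrac{1}{n-2}J^c{}_b \nabla_c J^b{}_a$, i.e. \nn{Bform}; one also checks $\con{c}$ is compatible with $J$ by confirming $\con{c}_a J^c{}_b = 0$ on all indices (not just the trace) is \emph{not} claimed — only the trace condition $\con{c}_a J^a{}_b = 0$ — so it suffices to solve the single linear equation in $B$, which is nondegenerate precisely because $n\geq 4 > 2$.

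Finally, for uniqueness: since every Weyl connection in $c$ is $\con{W}$ for a uniquely determined $B^g$, and the compatibility condition $\con{W}_a J^a{}_b = 0$ is, by the computation above, equivalent to an affine-linear equation $(n-2)B^g_a = J^c{}_b\nabla_c J^b{}_a$ with a unique solution, the Weyl connection compatible with $J$ is unique. I would then check consistency with \nn{Bt}: if $\widehat g = e^{2\phi}g$, the Levi-Civita connections differ by the standard $\Upsilon$-terms, $J$ is conformally invariant as an endomorphism, and one verifies that $B^{\widehat g}_a = B^g_a + \Upsilon_a$ with $\Upsilon = d\phi$, so that the resulting affine connection $\con{c}$ is genuinely independent of the choice of $g\in c$ — which is what makes it canonical. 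The main obstacle is purely computational: carefully tracking the index gymnastics in $\con{W}_a J^c{}_b$ so that the coefficient $(n-2)$ emerges cleanly and the trace-free identities of Lemma \ref{tffs} (e.g. that $J$ and hence several natural contractions are trace-free) are used to kill the spurious terms; there is no conceptual difficulty once the right linear equation in $B$ is isolated.
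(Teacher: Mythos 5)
Your proposal is correct and follows essentially the same route as the paper: parametrise the Weyl connections for a fixed $g\in c$ by their potential $B$, compute the trace $\con{W}_aJ^a{}_b$ to obtain the affine-linear equation $\nabla_iJ^i{}_\ell=(n-2)B_iJ^i{}_\ell$, and solve uniquely using $n>2$. The only cosmetic difference is that you verify the transformation law \nn{Bt} directly to conclude independence of $g$, whereas the paper deduces this from the pointwise uniqueness of $B^g$ for each representative.
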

\noindent{\bf Proof:} Let us fix $g\in c$.
The formula \nn{Bcon} for $\con{c}$ is equivalent to the
formula for its dual:
\begin{equation}\label{dual} 
\con{c}_aU_b=\nabla_a U_b + B_a U_b + B_b U_a - U^c B_c g_{ab} ,
\end{equation} 
where $U$ is any 1-form field. From this \nn{Weylc} follows, and
conversely it is easily verified that \nn{Weylc}, with the torsion
free condition, implies \nn{dual}.

Next note that using \nn{Bcon} and \nn{dual} we have
\begin{equation}\label{beep}
\con{c}_i J^k{}_\ell=\nabla_i J^k{}_\ell+ \big(
B^k J_{il}+B_\ell J^k{}_i - B_a J^a{}_\ell \delta^k_i - J^k{}_a B^a g_{\ell i}\big).
\end{equation}
Contracting this yields that 
$$ 
0=\con{c}_i J^i{}_\ell \Leftrightarrow 0= \nabla_i J^i{}_\ell + (B_iJ^i{}_{\ell}-nB_i
J^i{}_\ell +J^i{}_{\ell} B_i),
$$
and thus
$$ 
0=\con{c}_i J^i{}_\ell \Leftrightarrow \nabla_i J^i{}_\ell = (n-2)B_iJ^i{}_\ell \Leftrightarrow B_a= \tfrac{1}{n-2}J^c{}_{b}\nabla_c J^{b}{}_a .
$$ Since $B_a$ is uniquely determined, and $g\in c$ is arbitrary, this proves
the Proposition and that in particular $\con{c}$ depends only on $J$
and $c$ (but not the further information of $g\in c$).  \quad $\Box$

Observe that in the proof above the factor $(n-2)$ arising shows that
the property of compatibility is stable under conformal rescaling. In fact 
in dimension 2 every Weyl connection is complex.

\begin{remark}\label{Leef} Note that it is immediate from the Proposition 
\nn{Baileyp} that $B_a$, as defined in \nn{Bform}, must satisfy the
conformal transformation formula \nn{Bt}; this can also be verified
using the conformal transformation properties of the Levi-Civita
connection (see e.g. \cite{BEG}).  In the literature
(e.g.\ \cite{GrayH}) $2B^g$ is usually called the {\em Lee form},
c.f.\ \cite{Lee}.

Since $\con{c}$ is conformally invariant it follows that \nn{beep}
defines an invariant of $(M,J,c)$; this is precisely
the conformal invariant $\mu$ of \cite[Section 4]{GrayH}.
\end{remark}

With a canonical conformally invariant connection, as we have with
$\con{c}$, many geometric consequences are immediate. For example we
have an immediate consequence of the Proposition \ref{Baileyp}. 
\begin{corollary}\label{cgeod}
Let $(M,J,c)$ be a conformal almost Hermitian manifold of any dimension.
Then $M$ has a preferred class of parametrised curves, viz.\ the geodesics of 
$\con{c}$.
\end{corollary}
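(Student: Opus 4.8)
The plan is to read this off directly from Proposition~\ref{Baileyp}. The only conceptual ingredient is the elementary fact that any affine connection $\nabla$ on $M$ carries with it a distinguished family of \emph{parametrised} curves, namely its geodesics: the curves $\gamma$ whose velocity field $X$ satisfies $\nabla_X X=0$. (This is the same usage as in the sentence following Corollary~\ref{pgeod}; note that it is an affine connection, rather than merely a projective structure, that selects curves together with their parametrisation, which is exactly what the statement asserts.) Hence, to equip $(M,J,c)$ with a preferred class of parametrised curves it suffices to exhibit an affine connection that is produced canonically from the data $(M,J,c)$ alone.

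The next step is to invoke Proposition~\ref{Baileyp}, which for $n\geq 4$ furnishes precisely such a connection: the Weyl connection $\con{c}$ compatible with $J$. Two features of that Proposition are what I would use: (i) existence, and (ii) the independence of $\con{c}$ from the choice of representative $g\in c$ --- this is recorded in the last line of its proof, where the expression \nn{Bcon}--\nn{Bform} written with respect to $g$ is shown to yield the same connection for every $g\in c$. Granting (i) and (ii), $\con{c}$ is an invariant of the structure $(M,J,c)$, and therefore so is its geodesic spray; the class of geodesics of $\con{c}$ is then the asserted preferred class of parametrised curves. Uniqueness of $\con{c}$ among compatible Weyl connections is not logically needed for the bare statement, but it is what makes this the \emph{natural} choice and worth emphasising.

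I do not expect a real obstacle here: beyond correctly citing the independence-of-$g$ clause in the proof of Proposition~\ref{Baileyp}, the corollary is purely formal. The one point that deserves a word is the phrase ``of any dimension'', since Proposition~\ref{Baileyp} itself is stated only for $n\geq 4$; for the low-dimensional case $n=2$ I would appeal instead to the observation recorded immediately after that Proposition (every Weyl connection on a conformal surface is automatically compatible with $J$), and interpret the statement accordingly. This dimensional bookkeeping is the closest thing to a subtlety, and it is minor.
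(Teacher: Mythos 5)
Your proposal is correct and matches the paper, which offers no separate argument but simply presents the corollary as an immediate consequence of Proposition~\ref{Baileyp}: the canonical, representative-independent Weyl connection $\con{c}$ determines its geodesics as a preferred class of parametrised curves. Your remark on the phrase ``of any dimension'' is reasonable caution, though the section's standing hypothesis $n\geq 4$ makes the $n=2$ discussion unnecessary.
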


Furthermore, since $\con{c}$, from Proposition \ref{Baileyp}, is
canonically determined by a conformal almost Hermitian structure, it
may be used to proliferate (conformally invariant) invariants of the
structure. For example, the curvature $\stack{c}{R}$ of $\con{c}$ is
an invariant of the structure $(M,J,c)$. Its $\con{c}$ covariant
derivatives $\con{c}\cdots \con{c}\stack{c}{R}$, the $\con{c}$
derivatives of $J$ and contractions thereof also yield invariants and
so forth.  However for many purposes it is obviously more natural to
work rather with a connection that is both conformally invariant and
preserves the almost complex structure $J$.

\subsection{Canonical conformal almost complex connections}\label{ccct}
Since the conformal almost Hermitian structure $(M,J,c)$ determines
$\con{c}$ it follows that also canonically associated to the structure
 is the invariant
\begin{equation}\label{Gconf}
\G{c}^b{}_{ca}:= \frac{1}{2}(\con{c}_a J^b{}_d)J^d{}_c~;
\end{equation}
from the earlier developments it is clear this should play a 
fundamental role. 

More generally, by using the canonical torsion-free affine connection
$\con{c}$ as the initial connection, and using the results of Section
\ref{accalc}, we can form a range of geometric objects determined
canonically by the conformal structure and the compatible almost
complex structure $J$.  We begin this with the following.
\begin{proposition}\label{generc} Let $(M,J,c)$ be a conformal almost Hermitian structure of dimension
  $n\geq 4$ and $g\in c$. This structure determines a canonical affine
  connection $\con{gc}$ defined by
$$
\con{gc}_X Y = \con{c}_X Y+ \G{c}(Y,X),
 \quad \mbox{where} \quad
\G{c}^b{}_{ca}:= \frac{1}{2}(\con{c}_a J^b{}_d)J^d{}_c~.
$$ This has the properties:\\ $\bullet$ $\con{gc}_a g_{bc}= 2 B_a
g_{bc}$,\\ 
 $\bullet$ $\con{gc} J=0$;\\ $\bullet$ The
anti-Hermitian part of its torsion gives the Nijenhuis tensor 
$N_J(X,Y)= T^{gc}_{-}(X,Y)$.\\
\end{proposition}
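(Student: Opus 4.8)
The plan is to verify each of the three bulleted properties by direct specialisation of the general machinery of Section \ref{accalc}, taking as input connection the canonical Weyl connection $\con{c}$ of Proposition \ref{Baileyp}.

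\textbf{Step 1: the metric-type property.} First I would note that $\con{gc}_X Y = \con{c}_X Y + \G{c}(Y,X)$ differs from $\con{c}$ by the tensor $\G{c}$, which by Lemma \ref{tffs} (applied with $\nabla = \con{c}$) has the feature that $\G{c}(\cdot,X)$ is trace-free. More to the point, I would invoke the computation in the proof of Lemma \ref{tffs} showing $\G{}(X,Y,Z) = -\G{}(Y,X,Z)$ is skew in its first two arguments \emph{when the base connection is metric}; here $\con{c}$ is only conformal, so I would instead argue as follows. Since $\con{c}_a g_{bc} = 2 B_a g_{bc}$ and $\con{gc}$ differs from $\con{c}$ by $\G{c}$, we get $\con{gc}_a g_{bc} = 2 B_a g_{bc} - \G{c}(g)_{bac} - \G{c}(g)_{cab}$ (lowering with $g$), so it suffices to show the symmetrised-in-$b,c$ part of $\G{c}$ with both lower indices vanishes. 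But in the conformal setting $\G{c}$ with indices lowered by $g$ is skew in its first pair of arguments — this is exactly the content of the cited Proposition \ref{fGprop} (equation \nn{Gsk}), valid for any metric in the conformal class since $\con{c}$ is conformal — so the symmetric part over $(b,c)$ is zero and the first bullet follows.

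\textbf{Step 2: preservation of $J$.} This is immediate: by Proposition \ref{gener} applied with $\nabla = \con{c}$, the connection $\con{c}{}_X Y + G^{\con{c}}(Y,X)$ preserves $J$, and $\G{c}$ as defined in \nn{Gconf} is precisely $G^{\con{c}}$ (compare \nn{Gform} in abstract-index form with \nn{Gconf}). Hence $\con{gc} J = 0$.

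\textbf{Step 3: the torsion property.} Since $\con{c}$ is torsion-free (it is a Weyl connection), equation \nn{Ttf} gives $T^{gc}(X,Y) = \G{c}(Y,X) - \G{c}(X,Y)$, and Proposition \ref{nj} applies to the torsion-free connection $\con{c}$, yielding \nn{njformG}: $N_J(X,Y) = \G{c}_-(Y,X) - \G{c}_-(X,Y)$. Taking the anti-Hermitian part of \nn{Ttf} — exactly as in the proof of Corollary \ref{torG}, but now with $\con{c}$ in place of a general torsion-free $\nabla$ — gives $T^{gc}_-(X,Y) = N_J(X,Y)$. The only subtlety is that the Nijenhuis tensor computed via \nn{njform} uses a torsion-free connection, and $\con{c}$ is indeed torsion-free, so no correction terms arise; I expect this to be routine.

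The main obstacle is really just Step 1: one must be careful that $\con{c}$ is not metric but only conformal, so the naive argument "$\con{G}$ preserves the metric when $\nabla$ does" does not apply verbatim; instead one leans on the skew-symmetry \nn{Gsk} of $\G{}$ in its first pair of arguments (valid because the skew-symmetry statement holds relative to any representative metric and $\con{c}$ rescales each such metric conformally), which is precisely what forces the change in $\con{gc}_a g_{bc}$ to remain proportional to $g_{bc}$ with the \emph{same} Weyl potential $B_a$. Steps 2 and 3 are direct quotations of earlier results with $\nabla$ set to $\con{c}$.
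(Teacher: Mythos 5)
Your proof is correct and follows essentially the same route as the paper: the second and third bullets are direct specialisations of Proposition \ref{gener} and Corollary \ref{torG} with $\nabla=\con{c}$, and the first bullet is obtained, exactly as in the paper, by combining $\con{c}_a g_{bc}=2B_ag_{bc}$ from Proposition \ref{Baileyp} with the skew-symmetry $\G{}(X,Y,Z)=-\G{}(Y,X,Z)$ of Lemma \ref{fGprop}, which is indeed proved there for any conformal (not necessarily metric) connection. Your explicit attention to the fact that $\con{c}$ is only conformal is precisely the point the paper handles by stating Lemma \ref{fGprop} at that level of generality.
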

\noindent{\bf Proof:} All results are simply specialisations of
statements in Proposition \ref{gener} and Corollary \ref{torG} except
for the fact $\con{gc}_a g_{bc}= 2 B_a g_{bc}$. This is a consequence
of the corresponding property of $\con{c}$ in Proposition \ref{Baileyp}, and
the first part of the Lemma below.  \quad $\Box$
\begin{lemma} \label{fGprop} On an almost Hermitian manifold $(M,J,g)$
let $\nabla$ be any affine connection with the property that $\nabla_a
g_{bc}= 2 B_a g_{bc}$ for some 1-form $B$ (i.e.\ $\nabla$ is
conformal). Then with $\G{}^a{}_{bc}$ defined as in Proposition
\ref{gener}, $\G{}(X,Y,Z):= g(X,\G{}(Y,Z))$, and $\G{}_{\pm}(X,Y,Z):=
g(X,\G{}_{\pm}(Y,Z))$, we have\\ $\bullet$
$\G{}(X,Y,Z)=-\G{}(Y,X,Z)$;\\ $\bullet$ $\G{}(X,JY,Z)=-
\G{}(Y,JX,Z)$;\\ $\bullet$ $\G{}_+(X,Y,Z)=-\G{}_+(Y,X,Z)$, and
$\G{}_{-}(X,Y,Z)=-\G{}_{-}(Y,X,Z);$\\ $\bullet$ $G(JX,JY, Z)=- G(X,Y,
Z)$, and $G_\pm(JX,JY, Z)=-G_\pm(X,Y, Z)$.
\end{lemma}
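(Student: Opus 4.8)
The plan is to verify the four bulleted identities for $G$ (and its Hermitian/anti-Hermitian parts) attached to a conformal connection $\nabla$ on an almost Hermitian manifold, working throughout from the defining formula $G^a{}_{bc} = -\tfrac12 J^a{}_d \nabla_c J^d{}_b$ of Proposition \ref{gener} together with the conformality hypothesis $\nabla_a g_{bc} = 2B_a g_{bc}$. First I would establish the skew-symmetry $\G{}(X,Y,Z) = -\G{}(Y,X,Z)$. Writing $\G{}(X,Y,Z) = g_{ae} G^e{}_{bc} X^a Y^b Z^c$, expand $2g_{ae}G^e{}_{bc} = -g_{ae} J^e{}_d \nabla_c J^d{}_b = \omega_{ad}\nabla_c J^d{}_b$ where I use the K\"ahler form $\omega_{ad} = g_{ae}J^e{}_d$ and recall $\omega$ is skew. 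Then I would differentiate the Hermitian compatibility $g_{ef} J^e{}_a J^f{}_b = g_{ab}$ (equivalently $J$ orthogonal) along $\nabla_c$; the left side produces $2B_c g_{ab} + (\nabla_c J^e{}_a) J^f{}_b g_{ef} + J^e{}_a (\nabla_c J^f{}_b) g_{ef}$ and the right side produces $2B_c g_{ab}$, so the two ``$\nabla J$'' terms cancel. Re-expressing each in terms of $\omega$ and $G$ should give precisely the skew-symmetry in the first two slots; the $B$-terms drop out, which is why conformality (not metricity) suffices.

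Next, the second bullet $\G{}(X,JY,Z) = -\G{}(Y,JX,Z)$ follows formally from the first by the substitution $Y \mapsto JY$, $X\mapsto JX$ combined with the algebraic identity $G(JX,Y) = -JG(X,Y)$ of Lemma \ref{Gprop} (which, being purely pointwise and a consequence of $G$'s definition, holds regardless of the choice of $\nabla$): indeed $\G{}(JX, JY, Z) = g(JX, G(JY,Z)) = -g(JX, J G(Y,Z)) = -g(X, G(Y,Z)) = -\G{}(X,Y,Z)$ using orthogonality of $J$, and then antisymmetry in the first two slots plus the rename yields the claim. For the third bullet I would note $\G{}_{\pm}(X,Y,Z) = \tfrac12(\G{}(X,Y,Z) \pm \G{}(JX,JY,Z))$, and since I have just shown $\G{}(JX,JY,Z) = -\G{}(X,Y,Z)$, in fact $\G{}_+(X,Y,Z) = 0$ identically and $\G{}_-(X,Y,Z) = \G{}(X,Y,Z)$ — wait: more carefully, $\G{}_-(X,Y,Z) = \tfrac12(\G{}(X,Y,Z) - \G{}(JX,JY,Z)) = \G{}(X,Y,Z)$, and $\G{}_+(X,Y,Z)$ would vanish; but the bullets only assert skew-symmetry of $\G{}_\pm$ in the first two arguments, which is then immediate since $\G{}$ itself is skew there. (The apparent collapse $\G{}_+ = 0$ is consistent with the fourth bullet below, which is exactly the statement $G(JX,JY,Z) = -G(X,Y,Z)$, i.e.\ $G$ is anti-Hermitian over the first pair; this is what forces $G_+ = 0$ on the first pair, matching \nn{Ganti}.) So the third and fourth bullets are really the \emph{same} computation: once $\G{}(JX,JY,Z) = -\G{}(X,Y,Z)$ is in hand, $G_\pm(JX,JY,Z) = \mp \G{}_\pm(X,Y,Z)$ — no, let me just say: from $\G{}(JX,JY,Z) = -\G{}(X,Y,Z)$ one computes directly $\G{}_+ (JX,JY,Z) = \tfrac12(\G{}(JX,JY,Z) + \G{}(J^2X, J^2Y, Z)) = \tfrac12(-\G{}(X,Y,Z) - \G{}(X,Y,Z))\cdot(-1)\cdots$; this routine sign-chase, using $J^2 = -1$, delivers both $G(JX,JY,Z) = -G(X,Y,Z)$ and $G_\pm(JX,JY,Z) = -G_\pm(X,Y,Z)$.

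The main obstacle I anticipate is purely bookkeeping in the first step: correctly tracking the index positions when one differentiates $g_{ef}J^e{}_a J^f{}_b = g_{ab}$ and then rewrites the two resulting terms back in terms of $G^e{}_{bc}$ via $2g_{ae}G^e{}_{bc} = \omega_{ad}\nabla_c J^d{}_b$, making sure the cancellation of the $B$-terms is genuine and that no spurious symmetrisation is introduced. Everything after that is formal manipulation with $J^2 = -1$, orthogonality of $J$, skewness of $\omega$, and the pointwise identity $G(JX,Y) = -JG(X,Y)$. I would present the proof by: (i) proving the first bullet via the differentiated orthogonality relation; (ii) deducing $\G{}(JX,JY,Z) = -\G{}(X,Y,Z)$ — which is the fourth bullet — from Lemma \ref{Gprop} and orthogonality of $J$; (iii) obtaining the second bullet by combining (i) with the rename $X,Y\mapsto JX,JY$; and (iv) reading off the statements about $\G{}_\pm$ as immediate consequences of (i) and (iii). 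This also retroactively justifies the formulae \nn{Gsk}, \nn{prem}, \nn{skewp}, \nn{Ganti} quoted in Section \ref{AHs}, which is the reason the lemma is stated here in the broader conformal context.
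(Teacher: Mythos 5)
Your handling of the first, second, and fourth bullets is correct and is essentially the paper's own argument: the first bullet comes from differentiating the orthogonality relation $g(J\cdot,J\cdot)=g(\cdot,\cdot)$ along $\nabla$ and observing that the conformal terms $2B_cg_{ab}$ cancel on both sides (this is exactly the paper's Leibniz computation, written in indices); the fourth bullet is the one-line consequence of the anti-linearity $G(JX,Y)=-JG(X,Y)$ plus orthogonality of $J$; and your deduction of the second bullet from these two is valid. (A minor quibble: with $\omega_{ad}=g_{ae}J^e{}_d$ your intermediate formula should read $2g_{ae}G^e{}_{bc}=-\omega_{ad}\nabla_cJ^d{}_b$, but the overall sign does not affect the skew-symmetry conclusion.)

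The third bullet is where your argument genuinely breaks. You write $\G{}_{\pm}(X,Y,Z)=\tfrac12\big(\G{}(X,Y,Z)\pm\G{}(JX,JY,Z)\big)$, but this decomposes over the wrong pair of slots. By definition $\G{}_{\pm}(X,Y,Z)=g(X,G_{\pm}(Y,Z))$ with $G_{\pm}(Y,Z)=\tfrac12(G(Y,Z)\pm G(JY,JZ))$, so the Hermitian/anti-Hermitian split is taken over the \emph{last} two arguments of the trilinear form: $\G{}_{\pm}(X,Y,Z)=\tfrac12\big(\G{}(X,Y,Z)\pm\G{}(X,JY,JZ)\big)$. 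Your version leads to the false conclusion that $\G{}_{+}\equiv 0$ for every conformal connection, which cannot be right: $\G{c}_{+}$ is a nontrivial object throughout Section \ref{confS} (it is used to deform the connection in Theorem \ref{tproc}, and its vanishing characterises the nearly K\"ahler Weyl class in Proposition \ref{NKWch}). What is true --- and is exactly the fourth bullet, i.e.\ \nn{Ganti} --- is that $G$ is anti-Hermitian over the pair $(X,Y)$ of the trilinear form; that is a different pair from the one defining $G_\pm$, so no collapse occurs. The correct route to the third bullet is the one the paper indicates: it follows from the first two bullets together. Indeed $\G{}_{\pm}(Y,X,Z)=\tfrac12\big(\G{}(Y,X,Z)\pm\G{}(Y,JX,JZ)\big)$, and applying the first bullet to the first summand and the second bullet (with $Z$ replaced by $JZ$) to the second summand gives $-\tfrac12\big(\G{}(X,Y,Z)\pm\G{}(X,JY,JZ)\big)=-\G{}_{\pm}(X,Y,Z)$.
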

\noindent{\bf Proof:} The last claim is immediate from the complex
anti-linearity of $G$, as in Lemma \ref{Gprop}, and the fact that $J$
is orthogonal for $g$.  

It remains to establish the first two claims,
since these imply the third. (The parts $\G{}_+$ and $\G{}_{-}$ are as
defined in Section \ref{accalc}.) For the first we have
$$
\begin{array}{l}
2\big( G(Y,Z,X)  +   G(Z,Y,X)\big)\\ 
\hspace*{10mm} = g(J(\nabla_X J)Y,Z)+g(Y,J(\nabla_X J)Z)\\
\hspace*{10mm} = -g((\nabla_X J)Y,JZ)-g(JY,(\nabla_X J)Z)\\
\hspace*{10mm} = -g(\nabla_X (JY),JZ)-g(JY,\nabla_X (JZ))+g(J \nabla_X Y,JZ)+g(JY,J \nabla_X Z)\\
\hspace*{10mm} = - X\cdot g(JY,JZ) + (\nabla_Xg)(JY,JZ) + X\cdot g(Y,Z)-(\nabla_Xg)(Y,Z)\\
\hspace*{10mm} = 0,
\end{array}
$$
where the last equality follows using again that $g$ is almost Hermitian 
and that $\nabla_Xg=2B(X)g$.

Now for the second identity we calculate 
$$
G(X,JY,Z)+  G(Y,JX,Z)
$$
Setting $X'=-JX$ this is 
$$
\begin{array}{ll}
g(JX',G(JY,Z))-  g(Y,G(X',Z)) & = g(JX',G(JY,Z))-  g(JY,JG(X',Z))\\
& = g(JX',G(JY,Z))+  g(JY,G(JX',Z))\\
&=0,
\end{array}
$$
where in the last line we have used the previous result.
\quad $\Box$

In particular the identities of Lemma \ref{fGprop} hold for $\G{c}$.
From these we obtain the following conformal analogue of Proposition \ref{cin}, part H.
\begin{proposition}\label{ht}
  If $(M,J,c)$ is a conformal Hermitian structure then $\G{c}_-=0$ and
  the connection $\con{gc}$ has Hermitian torsion.
\end{proposition}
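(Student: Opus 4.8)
The statement to prove, Proposition \ref{ht}, asserts two things about a conformal Hermitian structure $(M,J,c)$: first that $\G{c}_-=0$, and second that the torsion of $\con{gc}$ is Hermitian. The strategy is to mirror exactly the argument for Proposition \ref{cin} part H (the Hermitian case in the Levi-Civita setting), replacing the Levi-Civita connection $\nabla$ there with the canonical Weyl connection $\con{c}$ of Proposition \ref{Baileyp}, and using that $\con{c}$ is torsion-free and conformal. The key inputs are: (i) Proposition \ref{nj}, which applies since $\con{c}$ is torsion-free, giving $N_J(X,Y)=\G{c}_-(Y,X)-\G{c}_-(X,Y)$; (ii) Lemma \ref{fGprop}, applied with $\nabla=\con{c}$ (legitimate since $\con{c}$ is conformal, i.e.\ $\con{c}_a g_{bc}=2B_a g_{bc}$), which gives the skew symmetry $\G{c}_-(X,Y,Z)=-\G{c}_-(Y,X,Z)$; and (iii) Corollary \ref{torG}, which gives the anti-Hermitian part of the torsion of $\con{gc}$ in terms of $N_J$.

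\textbf{Step 1: show $\G{c}_-=0$.} Since $(M,J,c)$ is Hermitian, $N_J=0$. By Proposition \ref{nj} applied to the torsion-free connection $\con{c}$, this forces $\G{c}_-(X,Y)=\G{c}_-(Y,X)$, i.e.\ $\G{c}_-(X,Y,Z)$ is symmetric in its first two arguments after lowering an index with $g$. On the other hand, Lemma \ref{fGprop} (third bullet, applied with $\nabla=\con{c}$) gives $\G{c}_-(X,Y,Z)=-\G{c}_-(Y,X,Z)$, i.e.\ skew in the same pair. Symmetric and skew together force $\G{c}_-(X,Y,Z)=0$ for all $X,Y,Z$, hence $\G{c}_-=0$.

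\textbf{Step 2: the torsion of $\con{gc}$ is Hermitian.} Write $T^{gc}$ for the torsion of $\con{gc}$. By Proposition \ref{generc} (or Corollary \ref{torG}, applicable because $\con{c}$ is torsion-free), the anti-Hermitian part of $T^{gc}$ equals $N_J$, which vanishes by hypothesis. Since the anti-Hermitian part of $T^{gc}$ is zero, $T^{gc}$ equals its own Hermitian part, i.e.\ $T^{gc}(JX,JY)=T^{gc}(X,Y)$; this is precisely the assertion that the torsion is Hermitian.

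\textbf{Expected obstacle.} There is essentially no obstacle here — the proposition is a direct transcription of the Levi-Civita Hermitian argument into the conformal/Weyl setting, and every ingredient is already available in the excerpt. The only point requiring a word of care is verifying that Lemma \ref{fGprop} genuinely applies with $\nabla=\con{c}$: this needs $\con{c}$ to be conformal in the sense of \eqref{Weylc}, which is part of the conclusion of Proposition \ref{Baileyp} (equivalently, it is the first bullet of Proposition \ref{generc}). Once that is noted, both claims follow in a couple of lines each, and the proof can be kept very short.
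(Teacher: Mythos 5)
Your proof is correct and follows essentially the same route as the paper: the identity $N_J(X,Y)=\G{c}_-(Y,X)-\G{c}_-(X,Y)$ combined with the skew-symmetry from the third bullet of Lemma \ref{fGprop} (applied with $\nabla=\con{c}$) forces $\G{c}_-=0$, and the Hermitian-torsion claim then follows from the torsion formula. The only cosmetic difference is that the paper exhibits the torsion as $\G{c}_+(Y,X)-\G{c}_+(X,Y)$, while you note directly that its anti-Hermitian part is $N_J=0$; these are equivalent.
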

\begin{proof} From Proposition \ref{generc} we have $N_J(X,Y)=
  \G{c}_-(Y,X)-\G{c}_-(X,Y)$. So if $N_J=0$ then
  $\G{c}_-(Y,X)=\G{c}_-(X,Y) $. This with the third bullet point of
  Lemma \ref{fGprop} (applied in the case $\nabla=\con{c}$, so $G$
  there is $\G{c}$) implies $\G{c}_-=0$. Thus the torsion is
  Hermitian: $\stack{gc}{T}(X,Y)=\G{c}_+(Y,X)-\G{c}_+(X,Y).$
\end{proof}

\subsection{Compatibility in the conformal setting}
We note here that in this setting there is a refinement
of Lemma \ref{compl} (related to the forming of metric traces).
\begin{lemma}\label{compl2}
  On a conformal almost 
Hermitian structure $(M,J,c)$ a Weyl connection
  $\con{W}$ is compatible, and then agrees with $\con{c}$, if and only
  if $\G{W}$ is totally trace-free; equivalently, if and only if
  $J\G{W}$ is totally trace-free; equivalently if and only if
  $\G{W}(\cdot,J\cdot)$ is completely trace-free; equivalently if and
  only if $\G{W}(J\cdot,J\cdot)$ is completely trace-free.
\end{lemma}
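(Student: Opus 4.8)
The plan is to leverage Lemma~\ref{compl} together with the structure of Weyl connections to upgrade the ``first trace'' statements there to ``totally trace-free'' statements. First I would recall that on a conformal almost Hermitian structure the metric $g\in c$ gives two kinds of traces on $\G{W}$: the index contraction $\G{W}{}^b{}_{bc}$ already handled in Lemma~\ref{tffs} and Lemma~\ref{compl}, and the additional metric trace $g^{bc}\G{W}{}_{abc}$ (and its variants obtained by inserting $J$'s), which only makes sense because we have a metric. The point is that, by Lemma~\ref{fGprop} applied with $\nabla=\con{W}$ (which is conformal, hence satisfies the hypothesis $\nabla_a g_{bc}=2B_ag_{bc}$), the tensor $\G{W}(X,Y,Z):=g(X,\G{W}(Y,Z))$ is skew in its first two arguments, so the metric trace over the first two slots vanishes automatically; combined with the complex anti-linearity and the orthogonality of $J$, one sees that all ``metric'' traces of $\G{W}$ either vanish identically or reduce, up to sign and relabelling, to the single index-contraction trace $\G{W}{}^b{}_{bc}$ or $\nabla_aJ^a{}_b$-type expression. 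So the only genuinely new content beyond Lemma~\ref{compl} is the interplay with the $B$-potential.

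Second, I would carry out the following chain of equivalences. By Lemma~\ref{compl}, $\con{W}$ is compatible iff $\G{W}$ is (index-)trace-free iff $J\G{W}$ is trace-free, etc.; these are the ``$\nabla_aJ^a{}_b=0$'' conditions. Now I must show: (i) compatible $\Leftrightarrow$ totally trace-free (i.e.\ also the metric traces vanish), and (ii) compatible $\Leftrightarrow$ $\con{W}=\con{c}$. For (ii): if $\con{W}$ is a Weyl connection compatible with $J$ then by the uniqueness part of Proposition~\ref{Baileyp} it must coincide with $\con{c}$; conversely $\con{c}$ is compatible by construction. For (i): the forward direction is clear since all traces of $\G{W}$ reduce to the index trace, which vanishes under compatibility by Lemma~\ref{compl}, together with the skew-symmetry from Lemma~\ref{fGprop} for the metric traces. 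For the backward direction, if $\G{W}$ is totally trace-free then in particular its index trace vanishes, so Lemma~\ref{compl} already gives compatibility. The same argument runs verbatim for $J\G{W}$, for $\G{W}(\cdot,J\cdot)$, and for $\G{W}(J\cdot,J\cdot)$, using \eqref{JG}, Lemma~\ref{Gprop}, and the fact that $J$ is $g$-orthogonal to pass between the four tensors.

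The main obstacle — really a bookkeeping point rather than a deep one — is to verify carefully that the ``extra'' metric traces available in the conformal (as opposed to merely affine) setting genuinely produce no new independent conditions: one must check that each of $g^{bc}\G{W}{}_{abc}$, $g^{ab}\G{W}{}_{abc}$, $g^{bc}J^d{}_b\G{W}{}_{adc}$, and so on, is either zero by the skew-symmetry/anti-Hermiticity identities of Lemma~\ref{fGprop}, or is proportional to the index trace $\G{W}{}^b{}_{bc}$ (equivalently to $\nabla_aJ^a{}_b$). I would organise this by first listing the first-two-slot metric traces (all zero by skew-symmetry), then the other metric traces, reducing each via \eqref{JG} and the orthogonality of $J$ to the single surviving quantity $\nabla_aJ^a{}_b$, at which point Lemma~\ref{compl} finishes the job. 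Thus the proof is essentially: ``apply Lemma~\ref{fGprop} to kill the new traces, then invoke Lemma~\ref{compl} and the uniqueness in Proposition~\ref{Baileyp}.''
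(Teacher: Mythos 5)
Your proposal is correct and proves the lemma, but it organises the one genuinely new step differently from the paper. Both arguments agree that everything beyond Lemma~\ref{compl} comes down to the metric trace over the last two slots, $g^{jk}\G{W}{}^i{}_{jk}$: the metric trace over the first two slots is the identically vanishing index trace of Lemma~\ref{tffs}, and the metric trace over the first and third slots is literally the compatibility-detecting index trace of Lemma~\ref{compl}. The paper evaluates the remaining trace head-on: it expands the definition of $\G{W}$, moves the derivative by the Leibniz rule, and uses $\con{W}_k g^{jk}=-2B_kg^{jk}$ twice so that the two $B$-terms cancel, leaving $\tfrac12 J^{im}\con{W}_kJ^k{}_m$, which vanishes exactly when $\con{W}$ is compatible. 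You instead extract the same conclusion from the skew-symmetry $\G{W}(X,Y,Z)=-\G{W}(Y,X,Z)$ of Lemma~\ref{fGprop}: lowering an index, $g^{bc}\G{W}{}_{abc}=-g^{bc}\G{W}{}_{bac}=-\G{W}{}^c{}_{ac}$, which is again the index trace handled by Lemma~\ref{compl}. This is a legitimate and arguably cleaner route --- the $B$-cancellation is performed once, inside the proof of Lemma~\ref{fGprop}, rather than being redone, and every metric trace then reduces purely algebraically to one of the two index traces. Your treatment of the $J$-decorated variants (via \eqref{JG} and the orthogonality of $J$) and of the uniqueness claim via Proposition~\ref{Baileyp} coincides with the paper's. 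One caution on bookkeeping: the contraction you name, $\G{W}{}^b{}_{bc}$ (upper index against the \emph{first} lower index), vanishes identically by Lemma~\ref{tffs} and so is \emph{not} equivalent to $\nabla_aJ^a{}_b=0$; the trace that your skew-symmetry reduction actually produces, and the one that detects compatibility, is $\G{W}{}^b{}_{cb}$. With that index order corrected, the argument closes.
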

\noindent{\bf Proof:} Let $\con{W}$ be a Weyl connection as in
\nn{Weylc} (and $\con{W}$ is torsion free). Recall from Lemma
\ref{tffs} that $\G{W}^i{}_{ik}=0$, $(J\G{W})^i{}_{ik}=0$, and these
properties are not linked to compatibility.

From Lemma \ref{compl} (and using Lemma \ref{tffs}) we have that
compatibility is equivalent to the vanishing of the trace, for all
tangent fields $X$, of any one of $\G{W}(X,\cdot)$, $J\G{W}(X,\cdot)$, 
$\G{W}(X,J \cdot)$.

Since a conformal structure is available we may also use
a metric $g\in c$ to form a trace:
$$
\begin{array}{ll}
2g^{jk}\G{W}^i{}_{jk}&= g^{jk} (\con{W}_k J^i{}_\ell )J^\ell{}_j= -  g^{jk} J^i{}_\ell \con{W}_k J^\ell{}_j\\
&= -J^i{}_\ell \con{W}_k(g^{jk}J^\ell{}_j)
+J^i{}_\ell J^\ell{}_j \con{W}_k g^{jk}\\
&=  J^i{}_\ell \con{W}_k(g^{\ell m}J^k{}_m)- \con{W}_k g^{ik}\\
&=J^i{}_\ell g^{\ell m} \con{W}_kJ^k{}_m+  J^i{}_\ell J^k{}_m\con{W}_kg^{\ell m}- \con{W}_k g^{ik}\\
&=J^{im} \con{W}_kJ^k{}_m -2B^i+2B^i=J^{im} \con{W}_kJ^k{}_m.
\end{array}
$$ 
This vanishes if and only if $\con{W}$ is compatible. If it is
compatible then by Proposition \ref{Baileyp} $\con{W}=\con{c}$.

On the other hand 
$$
g^{jk}(J\G{W})^i{}_{jk}= J^i{}_\ell  g^{jk}\G{W}^\ell{}_{jk},
$$ 
and so this trace vanishes if and only if $g^{jk}\G{W}^\ell{}_{jk}=0$,
whence the claim for $JG$ follows from the previous.

For the last two parts note that the metric trace of
$\G{W}(J\cdot,J\cdot)$ is obviously the same as the metric trace
of $G$ (since $J$ is orthogonal)
while the metric trace of $2\G{W}(\cdot,J\cdot)$ is 
$-g^{ca}\nabla_bJ^b{}_a $.
 \quad $\Box$

\subsection{Characterising the distinguished connection}\label{ccharsec}

  We may use $\con{gc}$ as the $\con{G}$ in Proposition \ref{gencon} to
  give a 1-parameter family $\con{c,t}$ of connections determined by
  the conformal almost Hermitian structure.
Acting with this on $g\in c$ we have
$$
\con{c,t}_a g_{bc}= 2B_a g_{bc}- t(g_{dc}\G{c}_+^{d}{}_{ab}+g_{bd}\G{c}_+^{d}{}_{ac})
$$ 
so this satisfies a conformal condition $ \con{c,t}_a g_{bc}=
B'_ag_{bc} $ if and only if
$t(\G{c}_{+cab}+\G{c}_{+bac})=2(B_a-B'_a)g_{bc}$. But, from Lemma \ref{compl2},
$\G{c}_+$ is trace-free 
and so either $t=0$ or 
$$
\G{c}_+(X,Y,Z)=-\G{c}_+(Z,Y,X). 
$$ But from Lemma \ref{fGprop} $\G{c}_+$ is also alternating on the
first two arguments and hence if the display holds then altogether we
have $\G{c}_+\in \Lambda^3$. But this implies $\G{c}_+=0$ as by
definition $\G{c}_+$ is Hermitian in the last pair of arguments,
whereas from Lemma \ref{fGprop} it is also anti-Hermitian in the first
pair.  In summary we have the following.
\begin{theorem}\label{tproc}
The connection $\con{c,t}$ defined by
\begin{equation}\label{tclass}
\con{c,t}_X Y: =\con{gc}_X Y+t \G{c}_+(X,Y) 
\end{equation}
is almost complex for any $t\in \mathbb{R}$. It is conformal 
if and only if $t=0$.
\end{theorem}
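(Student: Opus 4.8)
The plan is to verify the two assertions separately: that each $\con{c,t}$ preserves $J$, and that conformality forces $t=0$. The first assertion is essentially immediate: $\con{gc}$ is almost complex by Proposition \ref{generc}, and $\G{c}_+$ satisfies $\G{c}_+(X,JY)=J\G{c}_+(X,Y)$ by \nn{Jlin} of Lemma \ref{Gprop} (applied with $\nabla=\con{c}$, so the relevant $G$ is $\G{c}$); hence adding $t\G{c}_+(X,Y)$ to $\con{gc}_XY$ does not disturb the preservation of $J$. This is precisely the content of Proposition \ref{gencon}, so I would simply cite it.

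For the second assertion I would compute $\con{c,t}_a g_{bc}$ directly from the definition \nn{tclass}. Since $\con{gc}_ag_{bc}=2B_ag_{bc}$ by Proposition \ref{generc}, one gets
$$
\con{c,t}_a g_{bc}= 2B_a g_{bc}-t\big(g_{dc}\G{c}_+{}^d{}_{ab}+g_{bd}\G{c}_+{}^d{}_{ac}\big),
$$
i.e.\ $\con{c,t}_ag_{bc}=2B_ag_{bc}-t(\G{c}_{+cab}+\G{c}_{+bac})$ in the lowered-index notation $\G{c}_+(X,Y,Z)=g(X,\G{c}_+(Y,Z))$. This is conformal, $\con{c,t}_ag_{bc}=2B'_ag_{bc}$, precisely when $t(\G{c}_{+cab}+\G{c}_{+bac})=2(B_a-B'_a)g_{bc}$. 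Tracing over $b,c$ and using that $\G{c}_+$ is totally trace-free (Lemma \ref{compl2}, applied to the Weyl connection $\con{c}$) kills the left side and forces $B'=B$; so conformality would require $\G{c}_{+cab}+\G{c}_{+bac}=0$, i.e.\ $\G{c}_+$ is antisymmetric on its first and third slots.

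The remaining step is to show this antisymmetry forces $\G{c}_+=0$, so that the only conformal member is $\con{gc}=\con{c,0}$. By Lemma \ref{fGprop} (with $\nabla=\con{c}$), $\G{c}_+(X,Y,Z)$ is already skew on the first two arguments; combined with the skew-symmetry on the first and third just derived, $\G{c}_+$ is totally alternating, $\G{c}_+\in\Lambda^3$. On the other hand, by construction $\G{c}_+$ is Hermitian on its last pair of arguments (by \nn{Jlin}), while Lemma \ref{fGprop} gives that it is anti-Hermitian on the first pair, $\G{c}_+(JX,JY,Z)=-\G{c}_+(X,Y,Z)$. For a totally alternating tensor these two conditions are incompatible unless the tensor vanishes: being anti-Hermitian in the first pair and totally alternating makes it anti-Hermitian in every pair, contradicting Hermiticity in the last pair except when $\G{c}_+=0$. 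Hence $t=0$, completing the proof. The only point requiring any care is this last $\Lambda^3$ argument, but it is short; everything else is bookkeeping with the already-established identities.
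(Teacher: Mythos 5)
Your proposal is correct and follows essentially the same route as the paper: citing Proposition \ref{gencon} for the almost complex property, computing $\con{c,t}_ag_{bc}$, tracing with Lemma \ref{compl2} to reduce conformality to skewness of $\G{c}_+$ on the outer slots, and then combining the resulting total alternation with the Hermitian/anti-Hermitian clash from \nn{Jlin} and Lemma \ref{fGprop} to force $\G{c}_+=0$. No gaps.
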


The Theorem shows that, on any conformal
almost Hermitian structure,  $\con{gc}$ is certainly distinguished among the
natural class $\con{c,t}$.
 We seek a characterisation which is in the
spirit of the characterisation of the Levi-Civita connection on
Riemannian manifolds as the unique torsion free connection preserving
the metric. First a preliminary result that in fact is stronger than we need.

Here we weaken the property of preserving $J$ to just compatibility
and conformality. We show that compatible conformal connections are
parametrised by the algebraic torsion tensors, that is sections of $TM\otimes(\Lambda^2 T^*M)$.
\begin{theorem}\label{torsionB}
  Let $(M,J,g)$ be an almost Hermitian manifold. Let $T$ be a (1,2)
 tensor satisfying  $T^a{}_{bc}=-T^a{}_{cb}$ but which is otherwise 
arbitrary. Then there exists a unique 1-form $B'$ and a unique 
connection $\con{T}$ such that:\\
  $\bullet$ $\con{T} g= 2B' g$;\\
  $\bullet$ $T$ is the torsion of $\con{T}$;\\
 $\bullet$
  $\con{T}_aJ^a{}_b=0$.
\end{theorem}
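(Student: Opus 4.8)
The plan is to fix the Weyl potential first and vary it only at the end. For a \emph{fixed} $1$-form $B'$ on $M$ there is a unique affine connection, which I will call $\nabla^{B'}$, with torsion $T$ and with $\nabla^{B'}g=2B'g$: writing a connection as $\lccon+S$ with $S$ a $(1,2)$-tensor, the assignment $(\lccon+S)\mapsto\big(\operatorname{Tor}(\lccon+S),\,(\lccon+S)g\big)$ is an affine bijection from connections onto $\Gamma(TM\otimes\Lambda^2T^*M)\times\Gamma(T^*M\otimes S^2T^*M)$ — this is just the classical existence–uniqueness theorem for metric connections with prescribed torsion, with explicit Koszul-type inverse — so the pair $(T,\,2B'g)$ has exactly one preimage $\nabla^{B'}$. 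A key observation is that $\nabla^{B'}-\nabla^{0}$ is independent of $T$ (the torsions cancel in the difference) and acts on $g$ by $2B'g$; by uniqueness of the torsion-free connection with that property it is precisely the Levi-Civita-to-Weyl difference tensor appearing in \nn{Bcon}, with $B$ replaced by $B'$.

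Next I would transport the contraction carried out in the proof of Proposition \ref{Baileyp} — i.e.\ contracting \nn{beep}, an argument that uses only $\Tr J=0$ and that $J$ is orthogonal for $g$, never torsion-freeness — into the present setting. Adding the difference tensor \nn{Bcon} to $\nabla^{0}$ and contracting gives
$$
\nabla^{B'}_a J^a{}_b \;=\; \nabla^{0}_a J^a{}_b \;-\;(n-2)\,J^a{}_b B'_a .
$$
Thus on the $n$-dimensional affine family $\{\nabla^{B'}:B'\in\Gamma(T^*M)\}$ — every member of which has torsion $T$ and is conformal — the functional $B'\mapsto\nabla^{B'}_aJ^a{}_b$ is affine with linear part $B'\mapsto-(n-2)J^a{}_bB'_a$. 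Since $n\neq2$ and $J$ is invertible this linear part is an isomorphism of $\Gamma(T^*M)$, so the functional has a unique zero; explicitly, contracting $(n-2)J^a{}_bB'_a=\nabla^{0}_aJ^a{}_b$ with $J^b{}_c$ and using $J^a{}_bJ^b{}_c=-\delta^a_c$ yields the unique solution $B'_c=-\tfrac{1}{n-2}J^b{}_c\,\nabla^{0}_aJ^a{}_b$. Taking $\con{T}:=\nabla^{B'}$ for this $B'$ produces a connection with all three listed properties.

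For uniqueness: any $\con{T}$ satisfying the three conditions has a well-defined Weyl potential $B'$ read off from $\con{T}g=2B'g$, hence $\con{T}=\nabla^{B'}$ by the first step; the displayed identity together with $\con{T}_aJ^a{}_b=0$ then forces $B'$ to be the unique $1$-form found above, so $\con{T}$ is exactly the connection already constructed. The only point needing genuine care — more a thing to be careful about than an obstacle — is verifying that $\nabla^{B'}-\nabla^{0}$ is precisely the $T$-independent tensor of \nn{Bcon}, since that is what allows the Proposition \ref{Baileyp} contraction to transfer verbatim; after that it is pure bookkeeping, and the standing hypothesis $n\ge4$ is used only through $n\neq2$.
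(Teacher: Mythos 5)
Your proof is correct, and it reaches the conclusion by a genuinely different route from the paper's. The paper's proof consists of exhibiting the explicit solution --- the formula \nn{Apart} for the correction $\A{T}$ to the Weyl potential and the formula \nn{Gpart} for the difference tensor $\G{T}$ relative to $\con{c}$ --- and asserting that the verification is a straightforward calculation. You instead factor the problem through two separate uniqueness statements: (i) the classical fact that an affine connection is determined by its torsion together with its action on $g$, so that for each candidate Weyl potential $B'$ there is exactly one conformal connection $\nabla^{B'}$ with torsion $T$; and (ii) the observation that the contraction carried out in the proof of Proposition \ref{Baileyp} uses only the form of the Weyl difference tensor in \nn{Bcon}, the trace-freeness of $J$, and its orthogonality for $g$ --- never torsion-freeness of the base connection --- so that $B'\mapsto \nabla^{B'}_aJ^a{}_b$ is an affine map on $\Gamma(T^*M)$ whose linear part $B'\mapsto -(n-2)B'_aJ^a{}_b$ is invertible when $n\neq 2$. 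This yields existence and uniqueness cleanly and makes transparent exactly where the standing hypothesis $n\geq 4$ enters. What the paper's route buys, and yours does not immediately deliver, is the closed-form dependence of $\A{T}$ and $\G{T}$ on $T$ alone, which is precisely what is exploited afterwards (Proposition \ref{confTs}, Theorem \ref{cchar}) to define the conformally invariant map $T\mapsto \A{T}$ and the invariant $V(T)$; to extract \nn{Apart} from your argument you would still need to expand your base connection $\nabla^{0}$ as $\lccon$ plus the contorsion of $T$, which is where the combination $\tfrac12(T_{c}{}^a{}_{b}-T^a{}_{bc}-T_{bc}{}^a)$ of \nn{gT} enters. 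As a proof of the theorem as stated, however, your argument is complete.
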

\noindent{\bf Proof:} It is a straightforward calculation to verify
that the unique solution is given by the following:
$$
B'_d=A_d+B_d 
$$
where
\begin{equation}\label{Apart}
\A{T}_d:= \frac{1}{n-2}\big( T^a{}_{ad}+ \frac{1}{2}J^a{}_cJ^b{}_d
(T_a{}^c{}_b-T_{ba}{}^c- T^c{}_{ba})
\big);
\end{equation}
and the difference tensor $\G{T}$, defined by
$\G{T}(Y,X):=\con{T}_X Y-\con{c}_X Y$, is given by
\begin{equation}\label{Gpart}
\G{T}_{abc}= \A{T}_ag_{bc}-\A{T}_bg_{ac}-\A{T}_cg_{ab}+\tfrac12(T_{cab}-T_{abc}-T_{bca}).
\end{equation}
\quad
$\Box$\\
\noindent 
Note that because $g$ was arbitrary (apart from the condition of being
almost Hermitian) in the calculations of the proof, and
$B'_d=A_d+B_d$, it follows at once that 
 $A_b$ and hence $\G{T}$ (with the latter as a $(1,2)$
tensor) are conformally invariant. This is also clear by inspection of
the formulae \nn{Apart} and \nn{Gpart}.  These objects
are purely dependent on the torsion of $\con{T}$ and the structure $(M,J,c)$.

 The
torsion $\stack{gc}{T}$ of the connection $\con{gc}$ is a conformal
invariant of a conformal almost Hermitian structure,
$$
\stack{gc}{T}(X,Y): =\G{c}(Y,X)-\G{c}(X,Y).
$$ On the other hand, on a fixed conformal almost Hermitian manifold
it follows from Theorem \ref{torsionB} that the conformal almost
complex connections are, in particular, parametrised by their
torsions. Thus we have the following characterisation of $\con{gc}$.
\begin{proposition}\label{tpar}
Let $(M,J,c)$ be a conformal Hermitian structure, then 
$\con{gc}$ is the unique conformal almost complex connection with torsion $\stack{gc}{T}$.
\end{proposition}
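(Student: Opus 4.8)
The plan is to obtain this as an essentially immediate consequence of Theorem \ref{torsionB}, which has already done the substantive work of parametrising compatible conformal connections by their torsions.

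First I would record that $\con{gc}$ is itself a conformal almost complex connection whose torsion is $\stack{gc}{T}$. Indeed Proposition \ref{generc} gives both $\con{gc} J = 0$ and $\con{gc}_a g_{bc} = 2 B_a g_{bc}$, and since $\con{c}$ is torsion free and $\con{gc}_X Y = \con{c}_X Y + \G{c}(Y,X)$, the torsion of $\con{gc}$ is $\G{c}(Y,X) - \G{c}(X,Y) = \stack{gc}{T}(X,Y)$, exactly as in \nn{Ttf}.

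Next, let $\nabla'$ be any conformal almost complex connection on $(M,J,c)$ whose torsion equals $\stack{gc}{T}$. Then $\nabla'$ is conformal, $\nabla'_a g_{bc} = 2 B''_a g_{bc}$ for some $1$-form $B''$, and from $\nabla' J = 0$ we get in particular the contracted identity $\nabla'_a J^a{}_b = 0$, i.e.\ $\nabla'$ is compatible with $J$ in the sense of \nn{com}. So $\nabla'$ meets all three bullet conditions of Theorem \ref{torsionB} for the algebraic torsion tensor $T = \stack{gc}{T}$ (note $\stack{gc}{T}$ is skew in its lower pair, as is any torsion). But $\con{gc}$ meets exactly these same conditions for the same $T$, by the previous paragraph. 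The uniqueness clause of Theorem \ref{torsionB} then forces $\nabla' = \con{gc}$ (and $B''$ to coincide with the Weyl potential of $\con{gc}$).

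The only point needing attention is the bookkeeping: that ``almost complex'' is strictly stronger than the ``compatible with $J$'' hypothesis of Theorem \ref{torsionB}, and that the object $\stack{gc}{T}$ really is the full torsion of $\con{gc}$, not merely a component of it; beyond this there is no genuine obstacle, since the hard algebra is already contained in the proof of Theorem \ref{torsionB}. I would also note that the Hermitian hypothesis plays no role in this uniqueness argument --- it is relevant only if one wishes to add, via Proposition \ref{ht}, that in that case $\stack{gc}{T}$ is moreover a Hermitian tensor.
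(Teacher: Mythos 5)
Your proof is correct and follows the same route as the paper: the paper likewise derives this directly from Theorem \ref{torsionB}, observing that conformal connections compatible with $J$ are parametrised by their torsions and that almost complex connections are in particular compatible, so $\con{gc}$ (whose torsion is $\stack{gc}{T}$ by \nn{Ttf}) is the unique such connection with that torsion. Your closing remark that the Hermitian hypothesis is not needed is also consistent with the paper, which states the surrounding discussion for general conformal almost Hermitian structures.
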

 
Theorem \ref{torsionB} and the proof of Proposition \ref{gener}
enable us to invert the last observation and obtain a torsion 
characterisation of the canonical connection $\con{gc}$ of Proposition
\ref{generc}. First we summarise the presence of conformal invariants that we need.
\begin{proposition}\label{confTs} 
On a conformal almost Hermitian manifold $(M,J,c)$ there is a 
conformally invariant map from algebraic torsion tensors to 1-forms given by 
 $$
T^a{}_{bc}\mapsto \A{T}_d ,
$$ where $\A{T}$ is given by \nn{Apart}.  Thus for each algebraic
torsion tensor field we may form the conformally invariant $(1,2)$
tensor
\begin{equation}\label{JTinv}
V(T):= J\G{T}(\cdot,\cdot)+ \G{T}(J\cdot,\cdot)
\end{equation}
where $\G{T}$ is given by \nn{Gpart}.
\end{proposition}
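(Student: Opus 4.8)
The statement to prove, Proposition \ref{confTs}, asserts two things: first, that the assignment $T^a{}_{bc}\mapsto \A{T}_d$ given by formula \nn{Apart} is conformally invariant, and second, that consequently the $(1,2)$ tensor $V(T)$ of \nn{JTinv}, built from $\G{T}$ via \nn{Gpart}, is conformally invariant as well.

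\medskip

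The plan is to observe that almost all of this has already been established in the course of proving Theorem \ref{torsionB}. The key point I would make explicit is that in Theorem \ref{torsionB} the metric $g\in c$ was arbitrary (subject only to being almost Hermitian), the torsion tensor $T$ was a fixed algebraic object not depending on the choice of scale, and the resulting connection $\con{T}$ is characterised scale-independently by the three bullet conditions there (preserving $J$, having torsion $T$, and being conformal). Hence $\con{T}$ itself is a conformal invariant of $(M,J,c)$ and $T$. Since $\con{c}$ is conformally invariant by Proposition \ref{Baileyp}, the difference tensor $\G{T}(Y,X)=\con{T}_XY-\con{c}_XY$ is a conformally invariant $(1,2)$ tensor. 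This is the conceptual argument; I would then note that the same conclusion is visible directly from the formulae: inspecting \nn{Apart}, the quantities $T^a{}_{ad}$ and $J^a{}_cJ^b{}_d(T_a{}^c{}_b - T_{ba}{}^c - T^c{}_{ba})$ involve only $T$ (an algebraic tensor) and $J$, with indices raised/lowered by $g$; a short check using the conformal transformation of $g^{ab}$ confirms that the combination with the factor $\frac{1}{n-2}$ is scale-independent, exactly as the analogous computation in the proof of Proposition \ref{Baileyp} showed for $B_a$. Likewise \nn{Gpart} is then manifestly conformally invariant since $\A{T}$ is, and the remaining terms $T_{cab}, T_{abc}, T_{bca}$ and $g_{bc}$ etc.\ all transform so as to make $\G{T}$, as a $(1,2)$ tensor, invariant.

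\medskip

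Given that $\G{T}$ is a conformally invariant $(1,2)$ tensor, the final claim is immediate: $V(T) = J\G{T}(\cdot,\cdot) + \G{T}(J\cdot,\cdot)$ is obtained from $\G{T}$ by applying the fixed endomorphism $J$ (which is itself part of the conformally invariant data) and adding, so it is a conformally invariant $(1,2)$ tensor. No further computation is needed here.

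\medskip

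I expect the only real work — and hence the main (mild) obstacle — is the bookkeeping needed to confirm conformal invariance of \nn{Apart} directly from the formula, i.e.\ tracking the $e^{2\phi}$ factors and the $\Up_a = d\phi$ terms coming through $g^{ab}$, $B_a$, and index contractions, and seeing that the $\frac{1}{n-2}$ normalisation makes everything cancel. But this is essentially a repeat of the computation already carried out in the proof of Proposition \ref{Baileyp}, and in fact the cleanest route is to avoid it entirely by the scale-independent characterisation argument in the previous paragraph: since $\con{T}$ is pinned down uniquely by conditions that make no reference to a choice of $g\in c$, and $\con{c}$ is conformally invariant, both $\A{T}$ (which equals $B'_d - B_d$) and $\G{T}$ inherit conformal invariance for free. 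So I would present the characterisation argument as the main proof and relegate the direct formula check to a remark that it "is also clear by inspection."
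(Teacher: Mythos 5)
Your proposal is correct and matches the paper's own justification: the authors likewise observe, immediately after Theorem \ref{torsionB}, that since $g\in c$ was arbitrary in that proof and $B'_d=\A{T}_d+B_d$ with both $B'$ and $B$ transforming by the same $\Upsilon_a$ shift, the invariance of $\A{T}$ and $\G{T}$ (and hence of $V(T)$) follows at once, adding — as you do — that it is also clear by inspection of \nn{Apart} and \nn{Gpart}. No gap.
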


\begin{theorem}\label{cchar}
Let $\nabla$ be an almost complex conformal connection on a conformal almost
Hermitian structure $(M,J,c)$. Then  $ \nabla = \con{gc} $ if and only if
$V(\Tor{\nabla})=0$.
\end{theorem}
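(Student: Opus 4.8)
The plan is to reduce the statement to two uniqueness facts already established: the characterisation of $\G{c}$ as the complex anti-linear part of a $J$-correction tensor in Proposition \ref{gener}, and the torsion-parametrisation of compatible conformal connections in Theorem \ref{torsionB}. The starting observation is that any almost complex conformal connection $\nabla$ on $(M,J,c)$ satisfies $\nabla J=0$ (hence in particular $\nabla_a J^a{}_b=0$) and $\nabla_a g_{bc}=2B_a g_{bc}$ for some $B$; thus $\nabla$ realises the three bullet conditions of Theorem \ref{torsionB} for $T=\Tor(\nabla)$, and by the uniqueness there $\nabla$ coincides with the connection reconstructed from $\Tor(\nabla)$. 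Consequently the difference tensor of \nn{Gpart} for $T=\Tor(\nabla)$ is simply $\G{T}(Y,X)=\nabla_XY-\con{c}_XY$, and $V(\Tor(\nabla))=0$ is, by \nn{JTinv}, exactly the statement that $\G{T}(\cdot,X)$ is complex anti-linear in its first slot, i.e.\ $\G{T}(JW,X)=-J\G{T}(W,X)$ for all $W,X$.

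For the forward direction I would apply this to $\nabla=\con{gc}$. By Proposition \ref{generc}, $\con{gc}$ is an almost complex conformal connection, so the previous paragraph gives $\G{T}(Y,X)=\con{gc}_XY-\con{c}_XY=\G{c}(Y,X)$, i.e.\ $\G{T}=\G{c}$. Since $\G{c}(JW,X)=-J\G{c}(W,X)$ by the construction in Proposition \ref{gener} applied with base connection $\con{c}$ (which is torsion free), we obtain $V(\Tor(\con{gc}))=J\G{c}(\cdot,\cdot)+\G{c}(J\cdot,\cdot)=0$.

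For the converse, suppose $\nabla$ is almost complex and conformal with $V(\Tor(\nabla))=0$. By the first paragraph, $\nabla=\con{c}+\G{T}$ with $T=\Tor(\nabla)$, where $\G{T}$ is complex anti-linear in the first slot. Since $\con{c}$ is torsion free and $\nabla$ preserves $J$, the tensor $\G{T}$ (with arguments ordered as in $\nabla_XY=\con{c}_XY+\G{T}(Y,X)$) is a solution of \nn{jh} relative to the base connection $\con{c}$. By the second part of Proposition \ref{gener}, the complex anti-linear part over the first argument of any such solution equals the canonical $\G{c}$; as $\G{T}$ is itself complex anti-linear this forces $\G{T}=\G{c}$, and therefore $\nabla=\con{c}+\G{c}=\con{gc}$.

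I do not anticipate a genuine obstacle; the content lies entirely in the earlier results. The one point that must be handled cleanly is the repeated identification, via the uniqueness clause of Theorem \ref{torsionB}, of an abstractly presented almost complex conformal connection with the connection canonically built from its torsion — this is what licenses interpreting $V(\Tor(\nabla))$ in terms of $\nabla$ itself rather than some a priori different reconstruction. The remaining care is purely bookkeeping between the ``first argument of $G$'' convention and the argument order in $\con{G}_XY=\nabla_XY+G(Y,X)$.
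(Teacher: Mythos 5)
Your proof is correct and follows exactly the route the paper intends: it combines the uniqueness clause of Theorem \ref{torsionB} (so that $\G{T}$ built from $\Tor(\nabla)$ really is the difference tensor $\nabla-\con{c}$) with the uniqueness of the complex anti-linear solution of \nn{jh} from Proposition \ref{gener}, which is precisely what the paper invokes in the sentence preceding Proposition \ref{confTs}. The one point needing care --- that almost complex implies compatible, so Theorem \ref{torsionB} applies, and that the argument-order conventions for $\G{T}$ and $G$ match --- is handled correctly.
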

\noindent That is, on a conformal almost Hermitian structure,
$\con{gc}$ is the unique conformal almost complex connection with
torsion satisfying the algebraic condition $V(\Tor{\nabla})=0$.  The
Theorem provides a simple torsion condition which characterises this
among all connections preserving the conformal almost Hermitian
structure. Thus it provides the sought conformal almost Hermitian
analogue of the characterisation of the Levi-Civita connection.  As in
the Riemannian case (i.e. as in Proposition \ref{charGH}) we could
impose further conditions on the torsion for each structure in the
Gray-Hervella classification of conformal almost Hermitian structures
and so obtain the following conclusion.
\begin{corollary}\label{charcGH}
There is a canonical characteristic connection for each of the
structures in the Gray-Hervella classification of conformal almost 
Hermitian manifolds.
\end{corollary}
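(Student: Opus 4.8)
The plan is to run, in the conformal category, exactly the argument that produced Corollary \ref{charGH}: the canonical Weyl connection $\con{c}$ of Proposition \ref{Baileyp} takes the place of the Levi-Civita connection, the conformal invariant $\G{c}$ of \nn{Gconf} takes the place of $G$, Theorem \ref{torsionB} takes the place of the elementary fact that a metric together with a prescribed torsion determines a connection, and Proposition \ref{confTs} together with Theorem \ref{cchar} take the place of Theorem \ref{char}. So first I would, for each class in the Gray--Hervella classification of conformal almost Hermitian manifolds, rewrite the defining condition of the class as a (pointwise, linear) condition on $\G{c}$. This is precisely the dictionary set up in the subsequent subsections \ref{NKWS}, \ref{LCAKS}, \ref{cGH}: the defining tensors of the conformal Gray--Hervella list are expressed through $\G{c}$, the underlying identity being $\con{c}_a J^b{}_d = 2 J^b{}_e\,\G{c}^e{}_{da}$ (cf.\ \nn{JG}), supplemented by the symmetries of $\G{c}$ collected in Lemma \ref{fGprop} (skew-symmetry in the first two slots, anti-Hermiticity of $\G{c}$ and $\G{c}_\pm$ in the first pair) and the trace-freeness statements of Lemma \ref{compl2}. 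Propositions \ref{NKsk} and \ref{ht} already illustrate this translation in the nearly K\"ahler and Hermitian cases.

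Next I would convert each such condition on $\G{c}$ into a linear condition on an algebraic torsion tensor $T\in\Gamma(TM\otimes\Lambda^2 T^*M)$ by means of the explicit formula \nn{Gpart} of Theorem \ref{torsionB}, and adjoin to it the universal condition $V(T)=0$ from Proposition \ref{confTs}. By Theorem \ref{torsionB} every algebraic torsion tensor is the torsion of a unique conformal compatible connection, and by Theorem \ref{cchar} the extra requirement $V(T)=0$ forces that connection to preserve $J$ and to be $\con{gc}$ itself (or, for the classes that call for it, a canonical member of the family $\con{c,t}$ of Theorem \ref{tproc}). Since $\con{gc}$, restricted appropriately, visibly satisfies both the class condition and $V(T)=0$, the combined torsion system is consistent; Theorem \ref{torsionB} then gives uniqueness; and everything produced depends only on $(M,J,c)$. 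That connection is, by definition, a canonical characteristic connection for the class.

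The hard part will be the case-by-case bookkeeping needed to certify that the ``algebraically minimal'' torsion condition one attaches to a given Gray--Hervella class is the correct one --- i.e.\ that, on structures lying in the class, the natural minimality requirement (total skew-symmetry of the torsion in the nearly K\"ahler Weyl case, vanishing torsion in the conformally K\"ahler case, and so on) is, modulo conformality and compatibility, \emph{equivalent} to $V(\Tor\nabla)=0$, so that Theorem \ref{cchar} is applicable. Each such equivalence is a short computation that uses only Lemma \ref{fGprop}, Lemma \ref{compl2}, and the torsion formula \nn{Ttf} (or its $\con{gc}$-analogue in Proposition \ref{generc}), in the same spirit as the proofs of Proposition \ref{NKsk}, Proposition \ref{ht} and the discussion preceding Corollary \ref{charGH}; no existence obstruction can arise because in every case $\con{gc}$ (or an explicit $\con{c,t}$) already exhibits a solution, so the genuine content is this uniqueness check.
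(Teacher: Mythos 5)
Your proposal is correct and follows essentially the same route as the paper: translate each conformal Gray--Hervella class into a condition on $\G{c}$, recast it as a torsion condition via \nn{Gpart}, adjoin the universal condition $V(T)=0$ of Proposition \ref{confTs}, and obtain existence and uniqueness from Theorem \ref{torsionB} and Theorem \ref{cchar}, exactly mirroring the argument for Corollary \ref{charGH}. The only cosmetic difference is your aside about possibly landing on a member of the family $\con{c,t}$ with $t\neq 0$; Theorem \ref{tproc} rules this out since only $t=0$ is conformal, so the characteristic connection is always $\con{gc}$ as in the paper.
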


We next look at some of the standard conformal variants of the well
known special almost Hermitian structures. In particular we see how
these fit into the current picture.

\subsection{The Nearly K\"ahler Weyl condition} \label{NKWS}
It is natural to consider the conformal condition 
$$
(\con{c}_X J)X=0 \quad \mbox{for all} \quad  X\in \Gamma(TM);
$$ equivalently   
\begin{equation}\label{NKWcond}
2J\G{c}(X,X)=0 \Leftrightarrow \G{c}(X,X)=0  \quad \forall X\in \Gamma(TM).
\end{equation}
 It is easily verified that this agrees with the
manifold being in the class $\cW_1\oplus \cW_4$ of \cite{GrayH}, it is
the natural conformal analogue of the nearly K\"ahler class, and is
sometimes called {\em nearly K\"ahler Weyl} in the
literature. 
\begin{proposition} \label{nKW}
A conformal almost Hermitian structure $(M,J,c)$ is nearly K\"ahler
Weyl if and only if $\con{c}$ and $\con{gc}$ have the same
geodesics. In particular they are in the same projective class.
\end{proposition}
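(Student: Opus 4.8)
The plan is to reduce the statement to two ingredients: first, that $\con{gc}$ and $\con{c}$ differ precisely by the tensor $\G{c}$; and second, the elementary fact that two affine connections share their parametrised geodesics exactly when the symmetric part of their difference tensor vanishes. Concretely, I would recall from Proposition \ref{generc} that $\con{gc}_X Y = \con{c}_X Y + \G{c}(Y,X)$, so that for any curve with tangent field $X$ one has $\con{gc}_X X = \con{c}_X X + \G{c}(X,X)$. Hence the equation $\con{gc}_X X = 0$ holds along exactly the same curves as $\con{c}_X X = 0$ if and only if $\G{c}(X,X)=0$ for all $X\in\Gamma(TM)$, i.e.\ if and only if $\G{c}$ is skew in its two arguments.

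The next step is to match this with the nearly K\"ahler Weyl condition. By the reformulation \nn{NKWcond}, the structure $(M,J,c)$ is nearly K\"ahler Weyl precisely when $\G{c}(X,X)=0$ for all $X$, which is the very condition isolated above. This gives both implications at once. If $(M,J,c)$ is nearly K\"ahler Weyl, then $\con{gc}_X X = \con{c}_X X$ on every curve, so the two connections have identical geodesics. Conversely, if they have the same parametrised geodesics, then for an arbitrary point $p$ and vector $v\in T_pM$ the $\con{c}$-geodesic $\gamma$ with $\dot\gamma(0)=v$ is also a $\con{gc}$-geodesic; evaluating $\con{gc}_{\dot\gamma}\dot\gamma-\con{c}_{\dot\gamma}\dot\gamma=\G{c}(\dot\gamma,\dot\gamma)$ at $t=0$ forces $\G{c}(v,v)=0$, and since $p$ and $v$ are arbitrary this is exactly the nearly K\"ahler Weyl condition.

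The closing assertion is then immediate: connections with the same parametrised geodesics certainly have the same unparametrised geodesics, hence lie in a common projective class. One can also see this directly, since under the equivalent hypothesis $\G{c}$ is skew and, being the $\G{}$-tensor of the compatible connection $\con{c}$, totally trace-free by Lemma \ref{compl2}; thus the difference tensor has no $\delta\otimes(\cdot)+(\cdot)\otimes\delta$ component, and projective equivalence is visible at the algebraic level.

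I do not anticipate a genuine obstacle here; the proof is essentially a one-line computation plus the reading of \nn{NKWcond}. The only point requiring a little care is the reverse implication, where one must invoke the existence of a $\con{c}$-geodesic through each point in each direction in order to promote the global statement ``same geodesics'' to the pointwise algebraic identity $\G{c}(X,X)=0$; this is standard but worth stating explicitly.
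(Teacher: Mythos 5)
Your proof is correct and follows exactly the paper's (much terser) argument: the geodesic equations for $\con{c}$ and $\con{gc}$ differ by $\G{c}(X,X)$, and \nn{NKWcond} identifies the vanishing of this term with the nearly K\"ahler Weyl condition. Your explicit handling of the converse (running a geodesic through each point in each direction) and of the final projective-class remark simply fills in details the paper leaves implicit.
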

\begin{proof}
The geodesic equations $\con{c}_X X=0$ and $\con{gc}_XX=0$,
for the two connections,  differ by $\G{c}(X,X)$.
\end{proof}

Since those properties of the Levi-Civita connection and its
associated $G$, that were used to prove Proposition \ref{NKsk}, are
also satisfied by $\con{c}$ and $\G{c}$, we immediately have the
analogous result, as follows. 
\begin{proposition} \label{NKWch}
NKW structures satisfy the following conditions
$$   
\G{c}_+=0 
$$
while
$$ 
\Tor(\con{NKW})=-2 \G{c}_- =N_J,
$$
and is completely alternating.
\end{proposition}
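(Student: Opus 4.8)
The plan is to leverage Proposition \ref{NKsk} by verifying that its proof used only formal properties of the Levi-Civita connection $\nabla$ and its associated tensor $G$ that are shared by the canonical Weyl connection $\con{c}$ and $\G{c}$. First I would note that since $\con{c}$ is a conformal connection in the sense of \nn{Weylc} (with Weyl potential $B$), Lemma \ref{fGprop} applies with $\nabla = \con{c}$ and $G = \G{c}$; in particular $\G{c}(X,Y,Z):= g(X,\G{c}(Y,Z))$ is skew on its first two arguments and anti-Hermitian on that pair, i.e.\ $\G{c}(JX,JY,Z) = -\G{c}(X,Y,Z)$. By definition the NKW condition \nn{NKWcond} says $\G{c}(X,X)=0$, so $\G{c}(\cdot,\cdot,\cdot)$ is also skew on its last pair, hence totally alternating, exactly as in the first paragraph of the proof of Proposition \ref{NKsk}.

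The key steps then mirror that earlier proof verbatim. Since $\G{c}(\cdot,\cdot,\cdot)$ is completely alternating and anti-Hermitian over the first pair, it is anti-Hermitian over every pair of arguments; in particular it is anti-Hermitian over its last pair, which forces the Hermitian part to vanish, $\G{c}_+=0$. For the torsion statement, note that $\con{c}$ is torsion free (it is a Weyl connection), so formula \nn{Ttf} applies with $\nabla=\con{c}$ and gives $\Tor(\con{NKW})(X,Y) = \G{c}(Y,X)-\G{c}(X,Y) = -2\G{c}_-(X,Y)$ using the total skewness. Finally, since $\G{c}_+=0$ we have $\G{c}=\G{c}_-$, and \nn{Tory} of Corollary \ref{torG} (again valid since $\con{c}$ is torsion free) identifies $\Tor(\con{NKW})_- = N_J$; but $\Tor(\con{NKW})$ is already anti-Hermitian (being $-2\G{c}_-$), so $\Tor(\con{NKW}) = N_J$, and it is completely alternating because $\G{c}_-=\G{c}$ is.

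The only real point requiring care — the ``main obstacle'' such as it is — is making sure that every ingredient of Proposition \ref{NKsk}'s proof genuinely transfers. The two things used there were: (i) the first-two-argument skewness and anti-Hermitian symmetry of $G(\cdot,\cdot,\cdot)$, which came from the metric/conformal compatibility of the base connection and is supplied here by Lemma \ref{fGprop}; and (ii) the torsion-free\-ness of the base connection, needed for \nn{Ttf} and for Corollary \ref{torG}, which holds since $\con{c}$ is a Weyl connection. Both hold, so the argument goes through with $\nabla \leadsto \con{c}$, $G \leadsto \G{c}$, and $\con{G} \leadsto \con{NKW} := \con{c} + \G{c}_+ = \con{c}$ (the KN-type connection built from $\con{c}$), giving the three displayed conclusions. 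Thus the proof is essentially a one-line citation: the proof of Proposition \ref{NKsk} applies mutatis mutandis.
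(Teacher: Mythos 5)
Your proposal is correct and takes essentially the same route as the paper, whose entire proof is the one-line observation that the properties of the Levi-Civita connection and its $G$ used in Proposition \ref{NKsk} are shared by $\con{c}$ and $\G{c}$; you have simply spelled out that transfer (Lemma \ref{fGprop} for the symmetries, torsion-freeness of $\con{c}$ for \nn{Ttf} and Corollary \ref{torG}). The only blemish is the final identification $\con{NKW}:=\con{c}+\G{c}_+=\con{c}$, which should read $\con{gc}+\G{c}_+=\con{gc}=\con{c}+\G{c}$ (the torsion-free $\con{c}$ itself cannot have torsion $N_J$); your actual torsion computation via \nn{Ttf} is nevertheless the correct one, for $\con{gc}$.
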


\subsection{The locally conformally almost K\"ahler condition} \label{LCAKS}
Recall that an almost Hermitian manifold $(M,J,g)$ is said to be almost
  K\"ahler if the K\"ahler form $\om = g(\cdot,J \cdot)$ is closed.

On a dimension $n=4$ almost Hermitian manifold the map $\omega\wedge
\cdot:\Lambda^1\to \Lambda^3$ is an isomorphism. Thus one has
\begin{equation}\label{leeeq}
d\omega =\theta\wedge \omega 
\end{equation}
for some 1-form $\theta$. Note that it follows from this that 
$d\theta\wedge \omega=0$. 
If further $d\theta =0$ then we say that the manifold is
{\em locally conformally almost K\"ahler} (LCAK), since locally a
conformally related metric has a closed K\"ahler form.  

For an almost Hermitian manifold of dimension $n\geq 6$ the equation
\nn{leeeq} does not hold generally, but if it does then $d\theta$ is
zero, since then $\omega\wedge \cdot:\Lambda^2\to \Lambda^4$ is
injective. Thus for $n\geq 6$ the structure is LCAK if and only if \nn{leeeq}
holds.

Using the metric $g$ and its Levi-Civita connection $\nabla$,
\nn{leeeq} becomes
$$
\nabla_a \omega_{bc} + \nabla_c \omega_{ab} +\nabla_b \omega_{ca}= 
\theta_a \omega_{bc} + \theta_c \omega_{ab} +\theta_b \omega_{ca} .
$$ 
Contracting  now with $\omega^{bc}$ (indices raised using $g$), and
using $\omega^{bc}\omega_{bc}=n$, gives $(n-2) \theta =2 (n-2)
B $, and so
$$
\theta = 2 B, \quad \mbox{if} \quad  n\geq 4,
$$ 
where the 1-form $B$ is as in \nn{Bform} above. So $\theta$ is the Lee form.

Thus putting together the results for dimensions $n\geq 6$ with also
the definition of LC(A)K in dimension 4, we see that in all dimensions
$n\geq 4$ the invariant 
$$
F:= d B
$$ 
is an obstruction to an (almost)
Hermitian manifold being LC(A)K; as we discuss below $F$ is a
conformal invariant. If $F$ vanishes, and \nn{leeeq} holds, then the
de Rham cohomology class of $[B]\in H^1(M)$ is the obstruction to
$(M,J,g)$ being conformally (almost) K\"ahler.

In dimensions $n\geq 6$ we can capture the full obstruction to the
LCAK condition in terms of $\G{cq}$. Expanding the left and right hand
side of $d\omega=\theta\wedge \om$ in terms of $\con{gc}$ we obtain
$$
\con{gc}_{g[c}\om_{ab]} + \omega_{d[c} \stack{gc}{T}^{d}{}_{ab]} =\theta_{[c}\om_{ab]} ,
$$ where $[\cdots] $ indicates taking the totally skew part and
$\stack{gc}{T}$ is the torsion of $\con{gc}$. But $\con{c}$ preserves
$J$, and acts on $g$ as in Proposition \ref{generc}. So
$\con{gc}_{g[c}\om_{ab]}= 2 B_{[c}\om_{ab]}=
\theta_{[c}\om_{ab]}$. Thus $\omega_{d[c} \stack{gc}{T}^{d}{}_{ab]}=0$
is equivalent to \nn{leeeq}. In summary, and using the expression
\nn{Ttf} for the torsion, we have the following.  Here
$\stack{gc}{T}(\cdot,\cdot,\cdot)$ means
$g(\cdot,\stack{gc}{T}(\cdot,\cdot)) $.
\begin{proposition} \label{LCAK} Let $(M,J,g)$ be an (almost) Hermitian
  manifold of dimension $n\geq 6$.  Then it is LC(A)K if and only if the
  conformal invariant
\begin{equation}\label{first}
\Alt \G{c}(J\cdot,\cdot,\cdot)
\end{equation}
vanishes. 

In the Hermitian case there is also an alternative as follows.  A
Hermitian manifold is LCK if and only if the conformal invariant
\begin{equation}\label{sec}
\Alt \G{c}(\cdot,\cdot,\cdot) \quad \mbox{or equivalently} \quad \Alt \stack{gc}{T}(\cdot,\cdot,\cdot)
\end{equation}
vanishes.
\end{proposition}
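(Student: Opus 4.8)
The plan is to expand the condition $d\omega=\theta\wedge\omega$ in terms of the canonical Weyl connection $\con{c}$ rather than the Levi-Civita connection, exactly as in the discussion preceding Proposition \ref{LCAK}, and then read off the obstruction. First I would recall from Proposition \ref{generc} that $\con{gc}$ preserves $J$ and satisfies $\con{gc}_a g_{bc}=2B_a g_{bc}$, so that $\con{gc}_a \om_{bc}=2B_a\om_{bc}$. Taking the totally skew part over the three indices, the left side of $d\om=\theta\wedge\om$ becomes $2B_{[c}\om_{ab]}$ plus a torsion term $\om_{d[c}\stack{gc}{T}^d{}_{ab]}$, while the right side is $\theta_{[c}\om_{ab]}=2B_{[c}\om_{ab]}$ since $\theta=2B$ in dimension $n\ge 4$ (established just above the statement). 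Hence in dimension $n\ge 6$, where LC(A)K is equivalent to \nn{leeeq}, the structure is LC(A)K if and only if $\om_{d[c}\stack{gc}{T}^d{}_{ab]}=0$.

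Next I would convert this torsion condition into the stated invariant \nn{first}. Using \nn{Ttf} for $\con{c}$ (which is torsion free), the torsion of $\con{gc}$ is $\stack{gc}{T}(X,Y)=\G{c}(Y,X)-\G{c}(X,Y)$. Contracting $\om_{dc}=g_{de}J^e{}_c$ against this and skewing, the term $\om_{d[c}\stack{gc}{T}^d{}_{ab]}$ becomes a multiple of $\Alt\,\G{c}(J\cdot,\cdot,\cdot)$ after using that $\G{c}(\cdot,\cdot,\cdot)=g(\cdot,\G{c}(\cdot,\cdot))$ is skew on its first two arguments (the first bullet of Lemma \ref{fGprop}, applied with $\nabla=\con{c}$), which lets one merge the two $\G{c}$ contributions into a single completely alternated expression. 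Conformal invariance of \nn{first} is automatic because $\con{c}$, and hence $\G{c}$ regarded as a $(0,3)$ tensor via a choice of $g\in c$ up to overall scale, is built conformally invariantly from $(M,J,c)$, and the completely skew part of a $3$-tensor transforms by the conformal weight only.

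For the Hermitian case I would additionally invoke Proposition \ref{ht}: when $N_J=0$ we have $\G{c}_-=0$, so $\G{c}=\G{c}_+$ is anti-Hermitian in its first pair and Hermitian in its last pair by Lemma \ref{fGprop}. Under these symmetries the operator $\G{c}(J\cdot,\cdot,\cdot)$ and $\G{c}(\cdot,\cdot,\cdot)$ have the same completely skew part up to sign (one checks this by inserting $J$ into the anti-Hermitian slot pair and using that the alternation kills the part that is symmetric under the relevant transposition), giving the equivalence of the two forms in \nn{sec}; the second form, in terms of $\stack{gc}{T}$, then follows from $\stack{gc}{T}(X,Y)=\G{c}(Y,X)-\G{c}(X,Y)$ together with the skew symmetry on the first pair, which makes $\Alt\,\stack{gc}{T}$ a nonzero multiple of $\Alt\,\G{c}$.

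The main obstacle I anticipate is purely combinatorial bookkeeping: getting the numerical factors and index placements right when passing from $\om_{d[c}\stack{gc}{T}^d{}_{ab]}=0$ to $\Alt\,\G{c}(J\cdot,\cdot,\cdot)=0$, and in the Hermitian case tracking which transpositions survive the alternation so that the two expressions in \nn{sec} genuinely coincide rather than merely being proportional to overlapping pieces. All the structural input — $\theta=2B$, $\con{gc}_a\om_{bc}=2B_a\om_{bc}$, $\stack{gc}{T}=\G{c}(\cdot\,\text{swapped})-\G{c}$, and the symmetries of $\G{c}$ from Lemma \ref{fGprop} and Proposition \ref{ht} — is already available, so the proof is a matter of assembling these carefully rather than introducing any new idea.
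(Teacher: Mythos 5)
Your proposal is correct and follows essentially the same route as the paper: expand $d\omega=\theta\wedge\omega$ via $\con{gc}$ using $\theta=2B$ and $\con{gc}\bg=0$ to reduce LC(A)K to the vanishing of $\om_{d[c}\stack{gc}{T}^d{}_{ab]}$, identify this with $\Alt\,\G{c}(J\cdot,\cdot,\cdot)$ via \nn{Ttf} and the skew symmetry of Lemma \ref{fGprop}, and then use $\G{c}=\G{c}_+$ from Proposition \ref{ht} to pass to \nn{sec} in the Hermitian case. The only slight imprecision is in the last step, where the cleanest argument is that the alternating form precomposed with $J$ in all three slots vanishes iff the original does, combined with $\G{c}(J\cdot,J\cdot)=\G{c}(\cdot,\cdot)$ for Hermitian $\G{c}$ — yielding equivalence of the vanishing conditions rather than literal equality of the two skew tensors up to sign.
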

\begin{proof} The agreement (up to a constant) of \nn{first} and
  $d\omega-\theta\wedge \om$ was established above (and
  cf.\ e.g.\ \cite{GrayH}).

  From the first part  it follows that $\Alt g ( \cdot,\G{c}(J \cdot,J \cdot))
  $ is an equivalent obstruction. But, if $(M,J,c$) is conformal Hermitian then   $\G{c}$ is
  Hermitian, and so this is $\Alt g ( \cdot,\G{c}( \cdot, \cdot)) $.
\end{proof}

\begin{remark}
  Our condition $\Alt \G{c}(J\cdot,\cdot,\cdot)=0$ is the
  Gray-Hervella conformally invariant condition saying that the
  structure $(M,J,c)$ is in the class $W_2\oplus W_4$. The
  introduction of connection $\con{gc}$ enables us to say that in the
  Hermitian case the obstruction for the LCK is the non-trivial
  presence of a totally skew symmetric part of the torsion of
  $\con{gc}$.
\end{remark}

\subsection{The Gray-Hervella types}\label{cGH}

We summarise here how the conformal Gray-Hervella classification
appears in terms of the conformal intrinsic torsion $\G{c}$. Since the
situation is rather degenerate in dimension 4 we assume here that
$n\geq 6$.

\medskip

\begin{center}
\begin{tabular}{||l|l|l||}
\hline
\hline
Gray-Hervella type& Condition in terms of $\G{c}$& Name\\
\hline
\hline
$W_4$&$\G{c}=0$& LCK\\
\hline
$W_1\oplus W_4$&$\G{c}(X,X)=0$& NKW \\
\hline
$W_2\oplus W_4$& $\Alt \Big(g(\cdot,J\G{c}(\cdot,\cdot)\Big)=0$
& LCAK\\
\hline
$W_3\oplus W_4$&$\G{c}=\G{c}_+$&Hermitian\\
\hline
$W_1\oplus W_2\oplus W_4$&$\G{c}=\G{c}_-$&\\
\hline
$W_1\oplus W_3\oplus W_4$&$\G{c}_-(X,X)=0$&\\
\hline
$W_2\oplus W_3\oplus W_4$&$\Alt \Big(g(\cdot,J\G{c}_-(\cdot,\cdot)\Big)=0$
&\\
\hline
\hline
\end{tabular}
\end{center}

\medskip

\noindent Recall the abbreviations:\\ 
LCAK -- Locally conformally almost K\"ahler; LCK -- Locally
conformally K\"ahler; NKW -- Nearly K\"ahler Weyl.

The $W_i$ are $U(m)$ irreducible parts of the representation $W$
inducing $\Lambda^1\otimes \Lambda^{1,1}$, where $\Lambda^{1,1}$
indicates the Hermitian part of $\Lambda^2$, so $W\cong W_1\oplus
W_2\oplus W_3\oplus W_4 $; for details see \cite{GrayH}. The point is
that (on an almost Hermitian manifold) $\con{LC}\omega$ is a section
of $\Lambda^1\otimes \Lambda^{1,1}$ and the Gray-Hervella type
indicates irreducible parts of $\con{LC}\omega$ that may be not zero
on the structure; more precisely projection to the complementary $W_i$
is certainly 0.  

We could equally use the representation corresponding to $\G{LC}$, and
in that language $W_4$ corresponds to a trace-type part of $\G{LC}$
that we have denoted $B_a$ (cf.\ \nn{beep}). With this understood we
can recover conditions in terms of $\G{c}$ and $B$ for any of the
Gray-Hervella types by combining the various conditions with also the
possibility of setting $B_a$ to zero. The latter eliminates $W_4$, and
breaks conformal invariance. So for example the ``pure'' types arise
from either setting $\G{c}=0$, to obtain the $W_4$-type, or any of the
next three conditions in the table with, in addition, $B_a=0$. At the
other extreme imposing only $B_a=0$ gives the $W_1\oplus W_2\oplus
W_3$ type known as semi-K\"ahler. This enables the possibility of
treating any of the almost Hermitian Gray-Hervella types using the
conformal machinery from this section.

\subsection{Higher conformal invariants} \label{ci}
The primary Gray-Hervella classification is based around the vanishing
of invariants involving just one covariant derivative of $J$. There
are obvious ways to extend this to obtain special structures linked to
higher jets of $J$ and the metric. For example one may look at the
$U(m)$-type decomposition of the Riemann curvature, or alternatively
for conformal questions the $U(m)$-type decomposition of the Weyl
tensor, or the curvature $\stack{gc}{R}$, of $\con{gc}$, or even of
the covariant derivatives of $\stack{gc}{R}$.

Here we wish to describe special conformal conditions which arise from
equations on $B$. These are more subtle because $B$ itself is not
itself conformally invariant (recall \nn{Bt}) and some could be of
interest since they are related to key objects in conformal geometry,
such as the GJMS operators of \cite{GJMS}.
There is also a link with the following question:
\begin{quote}
Consider an even dimensional manifold $M$ that admits an almost
complex structure $J$.  Suppose that $M$ is equipped with a Riemannian
signature conformal structure $c$. Does the structure $(M,J,c)$ have a
distinguished metric $g\in c$?
\end{quote}
 Because $(M,J,c)$ certainly has a distinguished Weyl connection
$\con{c}$, this question is closely related to the properties of $B_a$.
In the case that $M$ is closed (i.e.\ compact without boundary) there
is a celebrated answer due to Gauduchon \cite{Gau84} (see also
\cite{CPessay}): There is a unique (up to homothety) metric $g\in c$
such that $\delta B=-\nabla^a B_a=0$ (i.e.\ $\delta$ is the formal
adjoint of $d$, in the metric scale $g$). 
By construction then this {\em Gauduchon metric} is an invariant of
the structure $(M,J,c)$. We shall show below that, at least in
suitably generic settings, there is another distinguished metric.  To
study this, and other conformal invariants, effectively we need a
small amount of additional background which yields another
interpretation of $B$.

On any smooth $n$-manifold $M$ the highest exterior power of the
tangent bundle $(\Lambda^nTM)$ is a line bundle. Its square
$(\Lambda^nTM)^2$ is orientable; let us assume an orientation.  We
may forget the tensorial structure of $(\Lambda^nTM)^2$ and view this
purely as a line bundle, we shall write $L$ or $E[1]$ for the $2n^{\rm
  th}$ positive root. For $w\in \mathbb{R}$ we denote $L^w$ by $E[w]$.

If $(M,g)$ is a Riemannian manifold then $(\Lambda^nTM)^2$ is
canonically trivialised (and oriented) since $\Lambda^n g^{-1}$ is a
section; for a section $\mu$ of $(\Lambda^nTM)^2 $ the component
function in the trivialisation is obtained (up to a non-zero constant)
by a complete contraction with $\wedge^n g$ (where this notation
means the projection of $\otimes^n g$ onto its part in
$(\Lambda^nT^*M)^2$).

Now consider a conformal structure $(M,c)$. Given any $g\in c$, we
write $\si_g$ for the positive section of $E[1]$ with
$(\si_g)^{2n}=\Lambda^n g^{-1}$, under the identification of $E[2n]$
with $(\Lambda^nTM)^2 $. Evidently on $(M,c)$ there is a canonical
section $\bg$ of $S^2T^* M[2]:=S^2T^* M\otimes E[2]$ (where $S^2$
indicates the symmetric second tensor power) with the property that
given any $g\in c$ we have
$$
\bg=\si^2_g g.
$$ This is called the {\em conformal metric}. Note that any positive
section $\si$ of $E[1]$ determines a metric by $g:=\si^{-2}\bg$, so we
call such a $\si$ a {\em scale}. That $L=E[1]$ is a $2n^{\rm th}$ root
(rather than some other choice) is a convenient for conformal
geometry, thus we often term sections of $E[w]$ {\em conformal
  densities} of weight $w$.

\medskip

In this section we note that there are a number of conformal invariants
associated to almost Hermitian manifolds. We have not tried to be
exhaustive in the treatment here, but rather we have attempted to
indicate some interesting directions.  Suppose then that we have a
conformal almost Hermitian manifold $(M,J,c)$.  First note that, from
the definition of $\si_g$, it follows that for any $g\in c$,
$\con{c}_a\si_g=-B_a \si_g$, and similar for $\con{gc}$, and so we
have the following.
\begin{proposition} \label{cm}
 The canonical connections $\con{c}$, and $\con{gc}$ preserve the
 conformal metric
$$
\con{c}\bg=0  \quad \mbox{and} \quad \con{gc}\bg=0.
$$
\end{proposition}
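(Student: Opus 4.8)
The plan is to compute in a fixed conformal scale. Fix a metric $g\in c$; by construction of the conformal metric we have $\bg=\si_g^2\, g$, where $\si_g$ is the positive section of $E[1]$ associated to $g$. Both $\con{c}$ and $\con{gc}$ are linear connections on $TM$, so each induces a connection on the density bundle $E[2]$ and on $S^2T^*M$, and hence by the Leibniz rule on $S^2T^*M[2]$. Thus it suffices to know how these connections act on the two factors $\si_g^2$ and $g_{bc}$ separately and then add the contributions.

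On the metric factor the two inputs are already available: $\con{c}$ is a Weyl connection, so $\con{c}_a g_{bc}=2B_a g_{bc}$ with $B$ as in \nn{Bform}, while by Proposition \ref{generc} the connection $\con{gc}$ obeys the same relation $\con{gc}_a g_{bc}=2B_a g_{bc}$. On the density factor, recall (as noted just above the statement) that $\con{c}_a\si_g=-B_a\si_g$, and hence $\con{c}_a\si_g^2=-2B_a\si_g^2$. For $\con{gc}$ one observes that it differs from $\con{c}$ by the $(1,2)$ tensor $\G{c}$, which by Lemma \ref{tffs} has vanishing trace over its value index and first argument; therefore $\con{gc}$ and $\con{c}$ induce the same connection on $\Lambda^nTM$, hence on $E[1]$, so that also $\con{gc}_a\si_g=-B_a\si_g$ and $\con{gc}_a\si_g^2=-2B_a\si_g^2$.

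Assembling the pieces, for $\nabla$ equal to either $\con{c}$ or $\con{gc}$,
\[
\nabla_a\bg_{bc}=(\nabla_a\si_g^2)\,g_{bc}+\si_g^2\,(\nabla_a g_{bc})=-2B_a\si_g^2\, g_{bc}+\si_g^2\,(2B_a g_{bc})=0,
\]
which is exactly $\con{c}\bg=0$ and $\con{gc}\bg=0$. Independence of the choice of $g\in c$ is automatic, since $\bg$, $\con{c}$ and $\con{gc}$ are all canonically attached to $(M,J,c)$; alternatively one can verify it directly from the conformal transformation rule \nn{Bt} for $B$. I expect no real obstacle here: the only point to watch is the sign bookkeeping, namely that the weight-$2$ density $\si_g^2$ contributes precisely $-2B_a$ so as to cancel the $+2B_a$ produced by the Weyl (respectively almost-Hermitian) behaviour of $g$.
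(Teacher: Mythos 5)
Your proof is correct and follows essentially the same route as the paper, which simply notes (in the sentence preceding the proposition) that $\con{c}_a\si_g=-B_a\si_g$ and similarly for $\con{gc}$, and then lets the Leibniz rule on $\bg=\si_g^2 g$ do the rest. Your justification that $\con{gc}$ induces the same connection as $\con{c}$ on $E[1]$ via the trace-freeness of $\G{c}$ from Lemma \ref{tffs} makes explicit the paper's ``and similar for $\con{gc}$'', and is the right argument.
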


Recall that each metric $g\in c$ determines, through the corresponding
Levi--Civita connection $\nabla$, a Weyl potential
$$
B^g_a= \tfrac{1}{n-2}J^c{}_{b}\nabla_c J^{b}{}_a.
$$
From this formula, it is easily verified that it transforms
conformally according to \nn{Bt}, i.e.\
$$
B^{\widehat{g}}_a= B^{g}_a + \Upsilon_a,
$$
where $\widehat{g}=e^{2\phi}g$, and $\Upsilon
=d\phi$, and $\phi\in C^\infty(M)$. It follows that if $ B^{g}_a $ is
exact then so is $B^{\widehat{g}}_a$, in which case we shall say that
$(M,J,c)$ is {\em conformally semi-K\"ahler} (CSK); on a conformally
semi-K\"ahler manifold there is a distinguished metric $g\in c$,
namely the metric satisfying
$$ 
B^{g}_a=0 \quad \mbox{equivalently} \quad \delta \om =0 \quad
\mbox{equivalently} \quad \con{c} =\con{LC}^g.
$$

It also follows from \nn{Bt} that the {\em Faraday tensor} 
$$
F=d B
$$ 
is a conformal invariant of $(M,J,c)$. In fact this is the
curvature of $\con{c}$ as a connection on $L^{-1}=E[-1]$, from which
its invariance is immediate.  If $F=0$ for a conformal almost
Hermitian structure $(M,J,c)$ then we are locally in the situation
above, so the structure is {\em locally conformally semi-K\"ahler}
(LCSK).   In an obvious way there are weakenings
of this condition by decomposing $F$ into its Hermitian and
anti-Hermitian parts $F_{\pm}$ (which are obviously conformal
invariants). We may specify that $F_+=0$ or $F_-=0$.

There are also routes to more subtle conformal invariants, and we wish
indicate these.  In the discussion below, $\Lambda^k$ denotes the
bundle of $k$-forms or, by way of notational abuse, the sections of
this bundle; and $\Lambda_k$ is the tensor product of $\Lambda^k$ with
the conformal $(2k-n)$-densities.

The manifold $M$ is oriented by the almost complex structure. 
On oriented dimension 4 manifolds
the bundle map known as the Hodge-star operator is conformally
invariant on 2-forms
$$
\star:\Lambda^2\to\Lambda^2
$$ 
and this squares to 1. The 2-forms decompose
orthogonally into the eigenspaces of this and applying the respective
projections to $F$ we obtain, respectively, the
self-dual/anti-self-dual parts $F^{*\pm}$ of $F$ as conformal
invariants of $(M,J,c)$.  Since almost complex manifolds are naturally oriented we
have these invariants on any 4-dimensional almost Hermitian manifold,
and the vanishing of just one of these gives an obvious weakening of
the locally conformally semi-K\"ahler condition. We may say that
$(M,J,c)$ is {\em Faraday (anti-)self-dual} if $F^{*-}=0$
(respectively $F^{*+}=0$); if one or the other condition holds we may
say the structure is {\em Faraday half-flat}. In an obvious way we may
seek a finer grading by further decomposing the $F^{*\pm}$ into
Hermitian and anti-Hermitian parts $F^{*\pm}_{\pm}$. This is
partially successful. A straightforward calculation shows that:
$$
\rank{(\Lambda^2)^{*+}_+}=1;\quad \rank{(\Lambda^2)^{*+}_-}=2; \quad \rank{(\Lambda^2)^{*-}_+}=3 ; \quad \rank{(\Lambda^2)^{*-}_-}=0.
$$ More precisely $(\Lambda^2)^{*+}_+=\mathbb{R}\om $,
$(\Lambda^2)^{*+}_-=\om^\perp\cap (\Lambda^2)^{*+}$, $(\Lambda^2)^{*-}_+= (\Lambda^2)^{*-}
$, $(\Lambda^2)^{*-}_- =0$, meaning the zero bundle.

The formal adjoint of $d:\Lambda^1\to \Lambda^2$, which we denote
$\delta: \Lambda_2\to \Lambda_1$, is also conformally invariant.  Thus
in dimension 4 the {\em Maxwell current} $\delta F= \delta d B$ is
also conformally invariant, and if this is zero then we may say the
conformal almost Hermitian structure is simply {\em Maxwell}.

In fact there is analogue for all dimensions $n\geq 4$ of the last
result.  In order to state this, and for the subsequent developments,
we shall need the following result from \cite{BrGodeRham}.
\begin{theorem}\label{BGres}
On Riemannian manifolds of even dimension $n\geq 4$ there  are
  natural formally self-adjoint differential operators
$$
Q^g_k: \Lambda^k\to
\Lambda_k \quad k=0,1,\cdots ,n/2+1,
$$
with $Q^g_{n/2}=1$, $Q^g_{n/2+1}=0$ and otherwise with properties as
follows. Up to a non-zero constant scale,
$Q^g_k$ has the form
$$
(d\d)^{n/2-k}+\LOT
$$
and $Q_01$ is the \IT{Branson}
$Q$-curvature. Then
\begin{itemize}
\item Upon restriction to the closed $k$-forms $\cC^k$, $Q_k^{g}$
has the conformal transformation law
\begin{equation}\label{Qoptrans}
Q_k^{\wh{g}}u=Q_k^g u +  \d Q_{k+1}^g d (\phi u)
\end{equation}
where $ \wh{g}=e^{2\phi}g$ with $ \phi$ a smooth function.
\item It follows that the differential operator
$\d Q_{k+1}^g d= :L_k:\Lambda^k\to\Lambda_k$ is conformally invariant.

\item The operators $G_k:\Lambda^k\to \Lambda_{k-1}$ defined by the
  composition $G_k:=\d Q_k$ are conformally invariant on the null
  space of $L_k$. In particular, further restricting, the map
  $G_k:\cC^k\to \Lambda_{k-1}$ is conformally invariant.

\end{itemize}
\end{theorem}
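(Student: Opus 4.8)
This is a restatement of the central construction of Branson and Gover \cite{BrGodeRham}, so the plan is to recall how those operators arise and then read off the listed properties. The natural route is through a conformally invariant ``filling'': represent $(M,c)$ either by the Fefferman--Graham ambient metric on an $(n+2)$-manifold $\tilde M$, or by a Poincar\'e--Einstein metric with conformal infinity $(M,c)$; equivalently one may stay on the conformal manifold and work with the form-tractor bundles and the tractor $D$-operator. In any of these pictures one extends a $k$-form $u$ on $M$ to an object of the correct homogeneity on $\tilde M$ that is harmonic for the ambient form-Laplacian to the requisite order and meets the natural tangency/boundary conditions, applies $n/2-k$ powers of that Laplacian (interlaced with the interior and exterior multiplications implementing the weight shift $\Lambda^k\to\Lambda_k=\Lambda^k[2k-n]$), and restricts to $M$ in the scale $g$; the result is $Q^g_k u$. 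The normalisations come out of the derivative count: the order of $Q^g_k$ is $n-2k$, so at $k=n/2$ no power of the Laplacian is applied and $Q^g_{n/2}=1$, while at $k=n/2+1$ the required power is negative and the operator is vacuous, $Q^g_{n/2+1}=0$; unwinding the $k=0$ case against $u=1$ reproduces the standard ambient expression for Branson's $Q$-curvature.

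I would first settle the leading term and self-adjointness, the easy parts. Comparison with the flat model (where $\tilde M$ is flat and $d,\delta$ interact with the Laplacian in the familiar way) identifies the top-order part of $Q^g_k$ with a nonzero universal multiple of $(d\delta)^{n/2-k}$; all remaining terms carry curvature and are lower order, so $Q^g_k=(d\delta)^{n/2-k}+\mathrm{LOT}$. Formal self-adjointness with respect to the pairing $\Lambda^k\times\Lambda_k\to\mathbb R$ is inherited from self-adjointness of the ambient Laplacian together with the fact that the chosen weight is exactly the one making that pairing conformally invariant; alternatively, since $(d\delta)^{n/2-k}$ is already self-adjoint one may symmetrise without disturbing anything else.

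The substantive step is the transformation law \eqref{Qoptrans} on closed forms, and this is where the work --- and the main obstacle --- lies. Here I would compare the constructions for $g$ and $\widehat g=e^{2\phi}g$ directly: rescaling the representative metric rescales the canonical defining density, hence the ``initial data'' of the ambient extension of $u$, by a factor built from $\phi$. Propagating this change through the $n/2-k$ applications of the ambient Laplacian, using at each stage the commutators of $d$ and $\delta$ with that Laplacian, produces a correction term; the content of the theorem is that on a \emph{closed} $u$ this correction collapses precisely to $\delta\,Q^g_{k+1}\,d(\phi u)$ --- that is, it is assembled from the next operator in the family by flanking it with $d$ and $\delta$ --- with $du=0$ exactly killing the terms that would otherwise spoil this shape. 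In \cite{BrGodeRham} this is organised as a downward induction on $k$ from $k=n/2$, the inductive hypothesis supplying the very $Q_{k+1}$ that appears on the right; the bookkeeping of commutator terms is the genuinely delicate point.

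Granting \eqref{Qoptrans}, the remaining assertions are formal, using only $d^2=0$ and $\delta^2=0$. For any $u\in\Lambda^k$ the form $du$ is exact, hence closed, so \eqref{Qoptrans} (with $k$ replaced by $k+1$) applies to $Q_{k+1}$ acting on $du$; then $L_k^{\widehat g}u=\delta Q^{\widehat g}_{k+1}(du)=\delta Q^g_{k+1}(du)+\delta\delta Q^g_{k+2}\,d(\phi\,du)=L_k^g u$, which is the conformal invariance of $L_k$ (and the same relations give $L_kd=0$, $\delta L_k=0$, so the detour sequence is a complex). For $G_k=\delta Q_k$ and $u\in\cC^k$ the closed-form law gives $G_k^{\widehat g}u=\delta Q_k^{\widehat g}u=\delta Q_k^g u+\delta\delta Q^g_{k+1}d(\phi u)=G_k^g u$; the slightly stronger invariance on all of $\ker L_k$ follows the same way once one uses the unrestricted transformation law of \cite{BrGodeRham}, which differs from \eqref{Qoptrans} by an extra term proportional to $L_k u$. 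This establishes all three bullet points.
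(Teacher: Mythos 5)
The paper does not prove this theorem at all: it is imported verbatim from \cite{BrGodeRham} and used as a black box, so there is no in-paper argument to compare against. Your proposal is therefore necessarily doing more than the authors do, and what you write is a faithful reconstruction of the architecture of the cited construction: the ambient/Poincar\'e--Einstein (equivalently tractor) realisation of $Q^g_k$, the derivative count giving $Q^g_{n/2}=1$ and $Q^g_{n/2+1}=0$, the flat-model comparison for the leading term $(d\delta)^{n/2-k}$, and the downward induction on $k$ for the transformation law \eqref{Qoptrans}. Your formal deductions of the second and third bullets from \eqref{Qoptrans} are correct and complete: applying \eqref{Qoptrans} at level $k+1$ to the closed form $du$ and hitting the result with $\delta$ kills the correction term by $\delta^2=0$, giving invariance of $L_k=\delta Q_{k+1}d$; the same one-line computation gives invariance of $G_k=\delta Q_k$ on $\cC^k$. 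The one place where your argument genuinely leans on an unestablished ingredient is twofold: (i) the transformation law \eqref{Qoptrans} itself, which you correctly identify as the delicate commutator bookkeeping of \cite{BrGodeRham} but only describe rather than verify; and (ii) the stronger invariance of $G_k$ on all of $\cN(L_k)$, which as you note requires the \emph{unrestricted} transformation law of $Q_k$ (whose extra term is annihilated by $\delta$ precisely on $\cN(L_k)$) --- that law is again quoted, not derived. Since the theorem is itself a citation in the paper, deferring these two points to the reference is consistent with the paper's own treatment; just be explicit that they are being cited rather than proved.
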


Thus we have the following generalisation of the Maxwell current.
\begin{theorem}\label{G2} 
The (conformally weighted) 1-form field $G_2F=L_1 B$ in $\Lambda_1$ is a local
(conformal) invariant of conformal almost Hermitian manifolds.
\end{theorem}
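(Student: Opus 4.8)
The plan is to combine two things already in hand: that the Faraday tensor $F=dB$ is a \emph{closed} $2$-form which is a conformal invariant of $(M,J,c)$, and the conformal-invariance clauses for the Branson--Gover operators collected in Theorem \ref{BGres}. The almost complex (a fortiori almost Hermitian) structure enters here only through the canonical Weyl connection $\con{c}$: it produces the Weyl potential $B^g_a=\tfrac{1}{n-2}J^c{}_b\nabla_c J^b{}_a$ for each $g\in c$, hence the canonical $2$-form $F=dB$, which is the curvature of $\con{c}$ acting on $E[-1]$ and so is independent of the choice of $g\in c$; everything that follows is pure conformal geometry.

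First I would record the algebraic identity $G_2F=L_1B$. With the definitions $G_k:=\d Q_k$ and $L_k:=\d Q_{k+1}d$ from Theorem \ref{BGres},
$$
G_2F=\d Q_2(dB)=\big(\d Q_2 d\big)B=L_1B ,
$$
which exhibits the object simultaneously as $G_2$ applied to the invariant $F$ and as a differential expression in the Weyl potential $B$ alone; incidentally this also makes manifest that $L_1B$, though computed from a $g$-dependent $B$, is in fact independent of $g\in c$.

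For conformal invariance I would take the cleanest route. Since $F=dB$ is closed, $F\in\cC^2$, and by the last bullet of Theorem \ref{BGres} the map $G_2\colon\cC^2\to\Lambda_1$ is conformally invariant; as $F$ is itself a conformal invariant of the structure, so is $G_2F\in\Lambda_1$. (Equivalently, via the middle bullet: $L_1=\d Q_2 d$ is conformally invariant as an operator on $\Lambda^1$, and although $B$ is not conformally invariant, under $\widehat g=e^{2\phi}g$ one has $B^{\widehat g}=B^g+\Up$ with $\Up=d\phi$ closed, so $L_1\Up=\d Q_2 d(d\phi)=0$ and hence $L_1^{\widehat g}B^{\widehat g}=L_1^gB^g$.) Locality is automatic, since every operator involved is a differential operator in the chosen scale.

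The one step that needs genuine care — and which I expect to be the main obstacle — is the bookkeeping of conformal weights: checking that the density weights attached to $\Lambda_1=\Lambda^1\otimes E[2-n]$, to $B$ as a (weight-$0$ but $g$-dependent) connection coefficient, and to $F=dB$ are mutually consistent, and that $F$ falls under the ``$\cC^k$'' hypothesis of Theorem \ref{BGres} verbatim. Once the conventions of Theorem \ref{BGres} are matched to the density set-up of Section \ref{ci}, no computation remains; the substantive analytic work has been done in the cited construction of the operators $Q_k$.
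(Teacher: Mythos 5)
Your proof is correct and is exactly the argument the paper intends (the paper states Theorem \ref{G2} without a separate proof, relying on the preceding observations that $F=dB$ is a closed conformal invariant and on the invariance clauses of Theorem \ref{BGres}). Both of your routes — $G_2$ restricted to $\cC^2$ applied to the invariant $F$, and $L_1$ invariant on $\Lambda^1$ killing the exact shift $\Up=d\phi$ of $B$ — are sound, and the weight bookkeeping you flag does check out since $F$ is an ordinary (unweighted) $2$-form and $G_2=\d Q_2$ lands in $\Lambda_1=\Lambda^1\otimes E[2-n]$ as required.
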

\begin{remark}
Thus $G_2F=0$ gives a weakening of the LCSK condition. 

  Note that $G_2F$ may be viewed as an analogue (for $\con{c}$ viewed as a
  connection on $L^{-1}$) of the Fefferman-Graham obstruction tensor
  of \cite{FGast}, since the latter can be seen to arise from a
  non-linear analogue of $G_2$ applied to a certain curvature (the
  curvature of the conformal tractor connection) \cite{GoPetObstrn}.
  \end{remark}

Next if the invariant $G_2F=0$ (which holds trivially if $(M,J,c)$ is
LCSK), then we are able to use the third bullet point of Theorem
\ref{BGres}. However some care is required as $B$ is not
invariant. For $g\in c$, in this case we obtain that
$$
Q^g_J:=G_1 B^g
$$
is a $-n$-density that transforms conformally like the $Q$-curvature:
if $\widehat{g}=e^{2\phi}g$ as above then
\begin{equation}\label{QJ}
Q^{\widehat{g}}_J= Q^{g}_J+L_0 \phi,
\end{equation}
where we also note that $L_0$, from Theorem \ref{BGres}, is the
dimension order conformally invariant Laplacian power operator of
\cite{GJMS} (the so-called critical GJMS operator). 
Clearly locally
constant functions are in the kernel of $L_0$, if these give the
entire null space $\cN(L_0)$ then $L_0$ is said to have {\em trivial
  kernel}.  Since $L_0$ is formally self-adjoint \cite{GrZ}, from standard
Fredholm theory we have the following result.
\begin{theorem}\label{specmet} Suppose $(M,J,c)$ is a closed conformal 
almost Hermitian manifold satisfying the conformal condition $G_2F=0$,
and such that the GJMS operator $L_0$ has trivial kernel. There is a unique
preferred unit volume metric $g\in c$ satisfying 
$$
Q^g_J=0.
$$
In the special case that $(M,J,c)$ is CSK, $g$ is the unique (unit volume) 
semi-K\"ahler metric in the conformal class. 
\end{theorem}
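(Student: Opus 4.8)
The plan is to reduce the statement to the solvability of a single linear equation for the conformal factor. Fix any background metric $g_0\in c$. Because the hypothesis $G_2F=0$ is in force, the transformation rule \nn{QJ}, $Q^{\widehat g}_J = Q^{g_0}_J + L_0\phi$ for $\widehat g = e^{2\phi}g_0$, is available; this is precisely the point at which that hypothesis is needed, since it is what makes $Q_J$ a well-defined $Q$-curvature-like density obeying such a law. Hence a representative $\widehat g = e^{2\phi}g_0$ of the conformal class satisfies $Q^{\widehat g}_J=0$ if and only if $\phi$ solves $L_0\phi = -Q^{g_0}_J$. So the whole theorem comes down to showing that this equation has a solution, that the solution is unique up to an additive constant, and that the remaining constant is then pinned down by the unit volume requirement.

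The first step I would carry out is solvability. The operator $L_0=\delta Q_1 d\colon\Lambda^0\to\Lambda_0$ is the critical GJMS operator of \cite{GJMS}; it is elliptic and formally self-adjoint on the closed manifold $M$, so standard Fredholm theory gives that $L_0\phi=f$ is solvable exactly when $f$ annihilates $\ker L_0$ under the natural density pairing $\Lambda_0\times\Lambda^0\to\mathbb R$ furnished by integration over $M$. By the trivial-kernel hypothesis $\ker L_0$ consists only of the (locally) constant functions, so the only obstruction is $\int_M f=0$. Now $f=-Q^{g_0}_J=-G_1B^{g_0}=-\delta\bigl(Q_1B^{g_0}\bigr)$, using the definition $G_k=\delta Q_k$ from Theorem \ref{BGres}; as the formal divergence of the conformally weighted $1$-form $Q_1B^{g_0}$ it integrates to zero on a closed manifold. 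Thus a solution $\phi$ exists, smooth by elliptic regularity. I expect this verification — that the integrability condition is automatically met — to be the only point requiring real thought; everything else is bookkeeping.

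Next I would handle uniqueness and normalisation. Two solutions of $L_0\phi=-Q^{g_0}_J$ differ by an element of $\ker L_0$, i.e.\ by a constant $c$, and passing from $\phi$ to $\phi+c$ replaces $\widehat g$ by $e^{2c}\widehat g$, scaling the volume by $e^{nc}$. Since $\mathrm{Vol}(\widehat g)>0$ there is a unique $c$ making the volume equal to $1$. This produces the unique unit volume metric $g\in c$ with $Q^g_J=0$, which is the first assertion.

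Finally the CSK case. If $(M,J,c)$ is CSK then every $B^g$ is exact, so $F=dB=0$ and in particular $G_2F=0$, and there is a semi-K\"ahler representative $g_{\mathrm{SK}}$ with $B^{g_{\mathrm{SK}}}=0$; rescaling by a positive constant preserves both facts, so $g_{\mathrm{SK}}$ may be taken of unit volume. Then $Q^{g_{\mathrm{SK}}}_J=G_1B^{g_{\mathrm{SK}}}=0$, so $g_{\mathrm{SK}}$ is a unit volume metric in $c$ killing $Q_J$, whence by the uniqueness just established $g_{\mathrm{SK}}=g$. Conversely every unit volume semi-K\"ahler metric has vanishing $Q_J$ and so equals $g$, so $g$ is indeed the unique unit volume semi-K\"ahler metric in the conformal class.
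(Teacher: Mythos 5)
Your argument is correct and is exactly the Fredholm-alternative argument the paper intends (the paper compresses it to a single sentence citing formal self-adjointness of $L_0$); in particular you correctly identify the one non-trivial point, namely that the integrability condition $\int_M Q^{g_0}_J=0$ holds automatically because $Q^{g_0}_J=\delta\bigl(Q_1B^{g_0}\bigr)$ is a divergence, a fact the paper only records in the remark following Theorem \ref{int}. The reduction via \nn{QJ}, the volume normalisation, and the CSK case are all handled as in the paper, so there is nothing to add.
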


On a generic (Riemannian signature) conformal manifold the critical
GJMS operator has trivial kernel, but there are examples where the
kernel is non-trivial (see e.g.\ \cite{ES-Frol,GoJ} and similar
examples are easily constructed). Thus we note the following partial
generalisation of the previous Theorem.
\begin{theorem}\label{int} Suppose $(M,J,c)$ is a closed conformal 
almost Hermitian manifold satisfying the conformal condition $G_2F=0$. Then
$$
I_\phi=\int_M \phi Q_J
$$
is a well-defined (i.e.\ conformally invariant) invariant
for any function $\phi$ in the kernel of $L_0$.

These conformal invariants obstruct the conformal prescription of zero
$Q_J$. That is there is metric $g\in c$ satisfying
$$
Q^g_J=0
$$ if and only if $I_\phi=0$ for all $\phi\in \cN(L_0)$. Such a
metric, if it exists, is unique up to $g\mapsto e^{2\phi}g$ where
$\phi\in \cN (L_0)$.
\end{theorem}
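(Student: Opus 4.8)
The plan is to derive everything from the conformal transformation law \nn{QJ} together with the formal self-adjointness of the critical GJMS operator $L_0$ and the compactness of $M$; the argument runs exactly parallel to the classical proof that $\int_M Q$ is a conformal invariant obstructing the prescription of vanishing $Q$-curvature. Throughout, $Q_J$ is available because the hypothesis $G_2F=0$ is precisely what allows one to form $Q^g_J:=G_1B^g$, as in the paragraph preceding \nn{QJ}, with conformal behaviour governed by \nn{QJ}.

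First I would show $I_\phi$ is well defined. Fix $\phi\in\cN(L_0)$ and let $g,\ \widehat g=e^{2\psi}g\in c$. By \nn{QJ},
$$
\int_M \phi\, Q^{\widehat g}_J = \int_M \phi\, Q^g_J + \int_M \phi\, L_0\psi .
$$
Since $L_0$ maps functions (weight $0$) to $-n$-densities and is formally self-adjoint (Theorem \ref{BGres}, cf.\ \cite{GrZ}), and $M$ is closed, $\int_M \phi\, L_0\psi = \int_M \psi\, L_0\phi = 0$ as $\phi\in\cN(L_0)$. Hence $\int_M\phi\, Q^g_J$ is independent of the choice of $g\in c$, so $I_\phi$ depends only on $(M,J,c)$ and on $\phi\in\cN(L_0)$.

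Next the obstruction statement. If some $g\in c$ has $Q^g_J=0$, then $I_\phi=\int_M\phi\, Q^g_J=0$ for every $\phi\in\cN(L_0)$, which is one direction. For the converse, suppose $I_\phi=0$ for all $\phi\in\cN(L_0)$; fix any $g\in c$ and look for $\widehat g=e^{2\psi}g$ with $Q^{\widehat g}_J=0$. By \nn{QJ} this is the linear equation $L_0\psi=-Q^g_J$. The operator $L_0$ is elliptic (leading part $(d\delta)^{n/2}$, hence $\Delta^{n/2}$ up to scale, by Theorem \ref{BGres}) and formally self-adjoint, so on the closed manifold $M$ the Fredholm alternative makes this solvable exactly when $Q^g_J$ is $L^2$-orthogonal to the cokernel $\cN(L_0^*)=\cN(L_0)$, i.e.\ when $\int_M\phi\, Q^g_J=0$ for all $\phi\in\cN(L_0)$. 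But that integral equals $I_\phi$ by the first part, hence vanishes by hypothesis; solving and setting $\widehat g=e^{2\psi}g$ yields $Q^{\widehat g}_J=Q^g_J+L_0\psi=0$. Finally, for uniqueness: if $g_1,g_2\in c$ both satisfy $Q_J=0$, write $g_2=e^{2\psi}g_1$; then $0=Q^{g_2}_J=Q^{g_1}_J+L_0\psi=L_0\psi$, so $\psi\in\cN(L_0)$, which is precisely the asserted ambiguity $g\mapsto e^{2\phi}g$, $\phi\in\cN(L_0)$.

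The argument is essentially formal once \nn{QJ} is in hand; the only non-trivial inputs are the ellipticity and formal self-adjointness of $L_0$ between the density line bundles (functions and $-n$-densities) on a closed manifold, giving the Fredholm alternative with cokernel identified with $\cN(L_0)$. The main point to be careful about is therefore bookkeeping of conformal weights — that $\phi\, Q_J$ and $\phi\, L_0\psi$ are genuine $n$-densities, so that the integrations over $M$ and the self-adjoint pairing make sense — rather than any substantial analytic difficulty.
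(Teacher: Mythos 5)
Your proof is correct and follows essentially the same route as the paper: the well-definedness of $I_\phi$ comes from the transformation law \nn{QJ} together with the formal self-adjointness of $L_0$ and the weight count, and the prescription/obstruction statement is the Fredholm alternative for the elliptic, formally self-adjoint critical GJMS operator on the closed manifold. The paper's own proof is just a terser version of this (it dismisses the second part as ``standard spectral theory''), so your write-up, including the explicit uniqueness check, is a faithful expansion of the intended argument.
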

\begin{proof}
  By construction the operator $L_0$ is formally self-adjoint, since
  the $Q^g_k$ are, so the first result follows from \nn{QJ} and
  the conformal weight of $Q_J$. 

The second part is again immediate from standard spectral theory.
\end{proof}

\begin{remark}
  As mentioned any (locally) constant function is in the kernel of
  $L_0$, but $Q_J$ is a divergence so the $I_\phi$ defined in Theorem
  \ref{int} may only possibly be non-trivial on structures where the
  kernel of $L_0$ includes locally non-constant functions.  The above
  Theorems are analogues of results for Branson's Q-curvature and its
  prescription in the cases where the conformal invariant $\int_MQ$ is
  zero. See, \cite{MalchSIGMA} and Proposition
  3.5 of \cite{Goforbid}, which use  \cite{BGaim,BGact}.

Note that in the language of physics, the role of $G_1$ in Theorem \ref{specmet}
and Theorem \ref{int} is as a ``gauge fixing'' operator. Indeed the
$(L_k,G_k)$ form graded injectively elliptic systems and in dimension
four $G_1$ is the Eastwood-Singer conformal gauge fixing operator of
\cite{ESgauge}.
\end{remark}

There are further global conformal invariants available as
follows. From the third bullet point of Theorem \ref{BGres} we have
that $G_1$ is conformally invariant on closed $1$-forms. The
conformally invariant subspace $\cH^1$ of $\cC^1$ consisting of those
closed 1-forms that are also annihilated by $G_1$ is termed the space
of {\em conformal harmonics} (of degree 1) and has dimension at least
as large as the first Betti number, see \cite{BrGoderham}. The
following is an easy consequence of the properties of $Q_1$ and the
conformal transformation formula for $B$.
\begin{theorem}\label{int2} On a closed conformal almost Hermitian manifold $(M,J,c)$, 
satisfying the conformal condition $G_2F=0$,
$$
I_u:=\int_M (B, Q_1 u)   
$$
is conformally invariant, for any 1-form $u\in \cH^1$.
\end{theorem}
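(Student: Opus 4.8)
The plan is to show that $I_u$ is independent of the representative $g\in c$. By the definition of the conformal class it suffices to compare $g$ with $\widehat g=e^{2\phi}g$, $\phi\in C^\infty(M)$. Two pieces of the integrand change. First, by \nn{Bt}, $B^{\widehat g}=B^g+d\phi$. Second, since $u$ is closed, the transformation law \nn{Qoptrans} with $k=1$ gives $Q_1^{\widehat g}u=Q_1^gu+\delta Q_2^g d(\phi u)=Q_1^gu+L_1(\phi u)$, where $L_1=\delta Q_2 d$ is the conformally invariant operator of Theorem \ref{BGres}. Writing $I_u^g=\int_M(B^g,Q_1^gu)$ and expanding the bilinear pairing, I would obtain
\begin{equation*}
I_u^{\widehat g}-I_u^g=\int_M\big(B^g,L_1(\phi u)\big)+\int_M\big(d\phi,Q_1^gu\big)+\int_M\big(d\phi,L_1(\phi u)\big),
\end{equation*}
so the proof reduces to showing that each of these three integrals vanishes.

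I would handle the middle integral first: since $M$ is closed, integration by parts gives $\int_M(d\phi,Q_1^gu)=\int_M(\phi,\delta Q_1^gu)=\int_M\phi\,G_1u$, and this vanishes because $u\in\cH^1$ means exactly that $G_1u=0$. For the first integral the idea is to use that $L_1$ is formally self-adjoint. This follows from Theorem \ref{BGres}: $L_1=\delta Q_2 d$ with $Q_2$ self-adjoint, so, using that $d$ and $\delta$ are mutually adjoint, $\int_M(B^g,L_1(\phi u))=\int_M(L_1B^g,\phi u)$. But $L_1B^g=\delta Q_2^g(dB^g)=\delta Q_2^gF=G_2F$ (and $F=dB^g$ is automatically closed), which is zero by hypothesis, so the first integral vanishes. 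The third integral is the easiest: $L_1$ factors through $d$, hence $L_1(d\phi)=\delta Q_2^g\,d(d\phi)=0$, and formal self-adjointness of $L_1$ (or, equivalently, one integration by parts moving the outer $\delta$ onto $d\phi$ and using $d^2=0$) gives $\int_M(d\phi,L_1(\phi u))=\int_M(L_1(d\phi),\phi u)=0$. Combining the three vanishings yields $I_u^{\widehat g}=I_u^g$, which is the claim.

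What requires attention here is bookkeeping rather than any genuine difficulty. One should first record that $I_u$ is a well-defined number: $B^g$ is an ordinary $1$-form (conformal weight $0$), $Q_1u\in\Lambda_1=\Lambda^1\otimes E[2-n]$, and contraction with the conformal metric produces an $n$-density, which is integrable over the closed manifold $M$; the analogous weight count must be checked for each pairing appearing in the argument so that the identity $\langle d\alpha,\beta\rangle=\langle\alpha,\delta\beta\rangle$ applies with no boundary terms --- this is precisely where compactness of $M$ enters. The hypothesis that $u$ be closed enters twice, to invoke \nn{Qoptrans} and to make $G_1u$ the restriction to $\cC^1$ of the conformally invariant operator $G_1$, while the full hypothesis $u\in\cH^1$ is exactly what kills the middle integral; the standing assumption $G_2F=0$ is exactly what kills the first. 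The only structural input beyond the transformation formulas is the formal self-adjointness of $Q_2$ (equivalently of $L_1$) from Theorem \ref{BGres}, so the single mild obstacle is keeping the conformal weights and the signs in the integrations by parts consistent.
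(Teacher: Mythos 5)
Your proof is correct and is exactly the argument the paper intends: the paper offers no written proof beyond calling the theorem ``an easy consequence of the properties of $Q_1$ and the conformal transformation formula for $B$,'' and your expansion into the three integrals, killed respectively by $G_2F=0$ (via formal self-adjointness of $L_1=\delta Q_2 d$ and the identity $L_1B=G_2F$), by $G_1u=0$ for $u\in\cH^1$, and by $L_1\circ d=0$, is the natural fleshing out of that remark. The weight bookkeeping you record ($B\in\Lambda^1$, $Q_1u\in\Lambda_1=\Lambda^1\otimes E[2-n]$, pairing landing in $E[-n]$) is also right.
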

\begin{remark}
The invariants $I_u$ here generalise the $I_\phi$ of Theorem \ref{int}. 
If $u=d\phi$ for some function $\phi$, then the condition $u\in
\cH^1$ is equivalent to $L_0\phi=0$, and integrating by parts we obtain
$I_{u}=I_\phi$.
\end{remark}

Finally note that in view of the transformation law \nn{QJ} we can
obviously add to the Branson Q-curvature $Q$ a multiple of $Q_J$ so as
to obtain a local conformally invariant $(-n)$-density.

\subsection{Summary}
Any  conformal almost Hermitian structure $(M,J,c)$ determines a 
 canonical Weyl structure $\con{c}$ with conformally invariant curvature $F$ (viewing $\con{c}$  as a connection on $E[-1]$). There is a natural
hierarchy of curvature conditions that one can consider: 
\begin{itemize}
\item $\con{c}$, and therefore also $F$, general;
\item \begin{enumerate}
\item $G_2 F=0$; or
\item $F_+=0$ or $F_-=0$; and in dimension 4 $F^{*+}_{+}=0$, or
  $F^{*-}_{+}=0$, and/or, $F^{*+}_{-}=0$ (and note that in dimension 4
  if either $F^{*+}=0$ or $F^{*-}=0$ then $G_2F=\delta F=0$);
\end{enumerate}
\item $F=0$, this is LCSK; then
\begin{enumerate}
\item $0\neq [B^g]\in H^1(M)$, where $g\in c$; or  
\item $B^g$ exact, so $\con{c}$ is a Levi-Civita connection, for some
  metric in $c$.
\end{enumerate} 
\end{itemize}
The last of these is the conformally semi-K\"ahler (co-symplectic)
condition; in case that is satisfied then $\con{c}$ is the
unique Levi-Civita connection in the conformal class, that is
compatible with $J$.

\begin{remark}\label{moreg}
None of the results developed in Section \ref{ci} above depend on
$J$ being orthogonal for the conformal structure. The discussion has
assumed this only to link with conditions that are familiar in the
literature (such as LCSK).

Moreover only the results using/claiming ellipticity rely on
Riemannian signature.
\end{remark}

\section{Projective Geometry}\label{PGS}
A projective (differential) geometry consists of a manifold $M$
equipped with an equivalence class $p$ of torsion free affine
connections (we write $(M,p)$); the class is characterised by the fact
that two connections $\nabla$ and $\widehat\nabla$ in $p$ have the
same {\em path structure}, that is the same geodesics up to
parametrisation.  In the following to avoid difficult language it will
be useful to understand $p$ in a slightly different way.  We shall
also use $p$ to mean {\em that set of parametrised curves} such that
each curve in $p$ is a geodesic for some $\nabla\in p$. We shall say
that a connection (in general with torsion) is {\em compatible with
  the path structure} if its geodesics agree with the geodesics of
some $\nabla\in p$.

Explicitly, the connections $\nabla$ and $\widehat\nabla$ are related
by the equation
\begin{equation} \label{ptrans}
 \widehat\nabla_a Y^b = \nabla_a Y^b +\Upsilon_a Y^b + \Upsilon_c Y^c \delta^b_a,
\end{equation}
where $\Upsilon$
is some smooth section of $ T^*M$. Equivalently two connections (on  $ T^*M$)
in the same projective class are related by
\begin{equation} \label{ptrans2}
 \widehat\nabla_a u_b = \nabla_a u_b - \Upsilon_a u_b -\Upsilon_b u_a,
\end{equation}
on any  $1$-form field $u$.

Here we consider almost complex manifolds $(M,J)$ of dimension $n=2m$
($m\in \{ 1,2,\cdots \}$) equipped with a projective structure $p$. 
Associated to any $\nabla\in p$ we may form 
\begin{equation}\label{A}
A^{\nabla}_c:= -\frac{1}{n}(\nabla_a J^a{}_b)J^b{}_c =\frac{1}{n}J^a{}_b(\nabla_a J^b{}_c).
\end{equation}
It is then easily verified that if we change $\nabla $ to
$\widehat\nabla$, as in \nn{ptrans}, then
\begin{equation}\label{At}
A^{\widehat{\nabla}}_c= A^{\nabla}_c +\Upsilon_c.
\end{equation}
We have the following result, which is an analogue of Proposition
\ref{Baileyp}.
\begin{proposition}\label{projJ}
  Let $(M,J)$ be an almost complex manifold of any (even) dimension. If
  this is also equipped with a projective structure $p$ then there is
  a unique connection $\con{p}\in p$ 
which is compatible with $J$.
\end{proposition}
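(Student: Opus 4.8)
The plan is to follow the strategy of the proof of Proposition~\ref{Baileyp}: fix an arbitrary $\nabla\in p$, parametrise every other connection in $p$ by a single $1$-form via \nn{ptrans}, compute the obstruction $\widehat\nabla_aJ^a{}_b$ to compatibility, and then solve algebraically for that $1$-form.

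First I would fix $\nabla\in p$ and let $\widehat\nabla\in p$ be obtained from it by \nn{ptrans} with $1$-form $\Upsilon$. Applying \nn{ptrans} to the upper index and \nn{ptrans2} to the lower index of $J^b{}_c$, the two terms $\pm\Upsilon_aJ^b{}_c$ cancel and one is left with
\[
\widehat\nabla_aJ^b{}_c=\nabla_aJ^b{}_c+\Upsilon_dJ^d{}_c\,\delta^b_a-\Upsilon_cJ^b{}_a .
\]
Contracting $a$ with $b$ and using that $J$ is trace-free, $J^a{}_a=\Tr J=0$, the last term drops out, giving
\[
\widehat\nabla_aJ^a{}_c=\nabla_aJ^a{}_c+n\,\Upsilon_dJ^d{}_c .
\]

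Now compatibility $\widehat\nabla_aJ^a{}_c=0$ is equivalent to $\Upsilon_dJ^d{}_c=-\tfrac1n\nabla_aJ^a{}_c$. Because $J$ is invertible ($J^2=-1$), I contract with $J^c{}_e$ to solve this uniquely: $\Upsilon_e=\tfrac1n(\nabla_aJ^a{}_c)J^c{}_e=-A^\nabla_e$, where $A^\nabla$ is the $1$-form of \nn{A}. Hence there is exactly one connection in $p$ compatible with $J$, namely the one obtained from $\nabla$ by the projective change with $1$-form $-A^\nabla$; call it $\con{p}$. This gives both existence and uniqueness. To see that $\con{p}$ does not depend on the choice of $\nabla\in p$, note that passing to another $\nabla'\in p$ alters $A^\nabla$ by exactly the $1$-form relating $\nabla$ and $\nabla'$, by \nn{At}, so that the correction $-A^{\nabla'}$ added to $\nabla'$ reproduces the same connection.

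I do not expect any genuine obstacle here; the calculation is short, and the only things to watch are the sign conventions in \nn{ptrans}--\nn{ptrans2} and keeping the free indices straight in the contraction. It is worth remarking that, in contrast with the conformal case of Proposition~\ref{Baileyp}, where the factor $(n-2)$ forces the restriction $n\ge 4$, the relevant factor here is $n$ itself, which is nonzero in every even dimension; this is why the statement, and hence the resulting distinguished connection $\con{p}$, is available already for $m\ge 1$.
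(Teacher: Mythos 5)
Your proof is correct and follows exactly the route the paper intends: it parametrises the connections in $p$ by the $1$-form $\Upsilon$ of \nn{ptrans}, computes the trace $\widehat\nabla_aJ^a{}_c=\nabla_aJ^a{}_c+n\,\Upsilon_dJ^d{}_c$ using that $J$ is trace-free, and solves uniquely via the invertibility of $J$ to recover $\Upsilon=-A^\nabla$, which is precisely the content of equation \nn{Jcon}. The paper dismisses this computation as "a straightforward exercise", so your write-up simply supplies the details (including the correct observation that the factor $n$, rather than $n-2$, is why no dimensional restriction beyond evenness is needed).
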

\begin{proof}
  Let $\nabla$ and $\con{p}$ be connections in $p$. Then it
  is a straightforward exercise to show that $\con{p}$ is compatible
  with $J$ (as defined in Section \ref{comS}) if and only if (as a
  connection on $T^*M$) we have
\begin{equation}\label{Jcon}
\con{p}_a  u_b = \nabla_a u_b + A^{\nabla}_a u_b + A^\nabla_b u_a 
\end{equation}
where $A^{\nabla}_a$ is given in terms of $\nabla$ by the formula \nn{A}
Showing this uses that $J$ is trace-
free (since $J^2=-\id$).\end{proof}

Note that from the uniqueness it follows that $\con{p}$ is independent
of $\nabla\in p$, as used in the proof. Alternatively one can see this
by putting together \nn{ptrans}, \nn{ptrans2}, and \nn{At}.

\begin{remark} Putting things in another order we may state things
  (somewhat informally) as follows.  Given an even dimensional
  projective manifold $(M,p)$, any almost complex structure $J$ on $M$
  yields a canonical breaking of the projective symmetry, via
  $\con{p}$.  
\end{remark}

There is an obvious consequence of the Proposition. The set of
curves $p$ has a distinguished subset, as follows.
\begin{corollary}\label{pgeod}
Let $(M,J,p)$ be a projective almost complex manifold of any dimension.
Then $M$ has a preferred class of parametrised curves, namely the geodesics of 
$\con{p}$.
\end{corollary}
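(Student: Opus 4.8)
The plan is to deduce this immediately from Proposition \ref{projJ}. That Proposition furnishes a single connection $\con{p}$ in the projective class $p$ that is compatible with $J$, and --- as noted immediately after its proof --- this connection is independent of the choice of $\nabla\in p$ used to present it; it is determined purely by the pair $(J,p)$. Hence $\con{p}$ is a canonical object attached to the structure $(M,J,p)$. First I would recall this, citing \nn{Jcon} and \nn{A} for the explicit form if desired, though for the corollary only the existence and uniqueness statement is needed.

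Given this, the preferred class of parametrised curves is simply the set of geodesics of $\con{p}$, that is, those curves $\gamma$ whose tangent field $X$ satisfies $\con{p}_X X = 0$. Since $\con{p}$ is uniquely determined by $(M,J,p)$, so is this family of curves, and the statement follows at once. The only point worth flagging --- rather than a genuine obstacle --- is the distinction from the unparametrised path structure that $p$ already carries: a projective class of torsion-free connections fixes geodesics only up to reparametrisation, whereas passing to the distinguished representative $\con{p}$ additionally pins down an affine parametrisation along each such path. Thus the corollary is a formal consequence of Proposition \ref{projJ}, exactly parallel to the way Corollary \ref{cgeod} follows from Proposition \ref{Baileyp} in the conformal setting.
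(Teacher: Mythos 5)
Your argument is correct and is exactly the paper's: the corollary is stated there as an immediate consequence of Proposition \ref{projJ}, since the uniqueness of the compatible connection $\con{p}$ (and its independence of the choice of $\nabla\in p$) makes its geodesics a canonical family of parametrised curves. Your remark distinguishing this from the unparametrised path structure of $p$ is a sensible clarification and matches the paper's intent.
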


\subsection{Canonical projective almost complex connections}\label{cpct}

The curvature of $\con{p}$, and its $\con{p}$ covariant derivatives, are
all invariants of the structure $(M,J,p)$.  
Another invariant is 
\begin{equation}\label{Gproj}
\G{p}^b{}_{ca}:= \frac{1}{2}(\con{p}_a J^b{}_d)J^d{}_c~;
\end{equation}
in analogy with the conformal case, this, and its tensor parts, play a
fundamental role.

Specialising the development and results of section \ref{accalc} to
the case that $\con{p}$ is the initial affine connection, we can form
a range of connections and geometric objects that are determined
canonically by the almost complex structure $J$ and the projective
structure.  In particular we obtain the following result.
\begin{proposition}\label{generp} Let $(M,J,p)$ be a 
  projective almost complex structure. This determines a
  canonical affine connection $\con{gp}$ defined by
$$
\con{gp}_X Y = \con{p}_X Y+ \G{p}(Y,X),
 \quad \mbox{where} \quad
\G{p}^b{}_{ca}:= \frac{1}{2}(\con{p}_a J^b{}_d)J^d{}_c~.
$$ This has the properties:\\ 
$\bullet$ $\con{gp} J=0$;\\ $\bullet$ The
anti-Hermitian part of its torsion gives the Nijenhuis tensor 
$N_J(X,Y)= T^{gp}_{-}(X,Y)$.\\
\end{proposition}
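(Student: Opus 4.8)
The plan is to recognise Proposition \ref{generp} as a direct specialisation of the general $G$-calculus of Section \ref{accalc}, applied to the canonical affine connection $\con{p}$ supplied by Proposition \ref{projJ}. The two facts that make this work are that $\con{p}$, being a member of the projective class $p$, is torsion free, and that the tensor $\G{p}$ of \nn{Gproj} is precisely the tensor $G$ of \nn{Gform} built from the choice $\nabla = \con{p}$.

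First I would note that $\con{p}$ is torsion free: a projective structure consists by definition of torsion free connections and $\con{p}$ is one of them. Comparing \nn{Gproj} with \nn{Gform} then shows that $\con{gp}_X Y = \con{p}_X Y + \G{p}(Y,X)$ is exactly the connection $\con{G}$ of Proposition \ref{gener} for the initial connection $\con{p}$; hence the first bullet, $\con{gp}J = 0$, is immediate from that proposition. For the second bullet I would invoke Corollary \ref{torG}: since $\con{p}$ is torsion free, that corollary yields $T^{gp}_-(X,Y) = N_J(X,Y)$ for $\con{G} = \con{gp}$, which is the claimed identity.

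I do not expect any real obstacle here; all the substantive work has been carried out in Propositions \ref{gener} and \ref{projJ} and in Corollary \ref{torG}. The only point requiring attention is checking that the torsion-freeness hypothesis of Corollary \ref{torG} holds for $\con{p}$, and this is automatic for any connection belonging to a projective equivalence class.
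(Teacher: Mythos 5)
Your proposal is correct and matches the paper's own proof, which simply states that both bullets are specialisations of Proposition \ref{gener} and Corollary \ref{torG} with $\nabla=\con{p}$. Your explicit check that $\con{p}$ is torsion free (so that Corollary \ref{torG} applies) is the one hypothesis worth verifying, and you handle it correctly.
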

\noindent{\bf Proof:} All results are simply specialisations of
statements in Proposition \ref{gener} and Corollary \ref{torG}.
 \quad $\Box$

 Now we may use $\con{gp}$ as the $\con{G}$ in Proposition
 \ref{gencon} to give a family of connections $\con{p,t}$ determined
 by the projective almost complex structure, and parametrised by $t\in
 \mathbb{R}$. Recall that in the conformal setting the canonical
 connection $\con{gc}$ had the congenial property that it preserved
 both $J$ and the conformal structure. The projective analogue would
 be a connection that preserves both $J$ and the path structure of the
 projective geometry. Indeed since, even more, there are preferred
 geodesics (by Corollary \ref{pgeod}) it is reasonable to seek a
 connection that shares this preference. 
 We investigate using the family $\con{p,t}$.

For each choice of $t$, the connection $\con{p,t}$ has the same
 geodesics as the connection $\con{p}$ (and so the same paths as any
 connection in $p$) if and only if 
$$
\G{p}(X,X)+ t\G{p}_+(X,X)=0 \quad \mbox{for all} 
\quad X\in \Gamma(TM) .
$$
This must also hold if $X$ is replaced by $JX$. Together we obtain an
equivalent system 
$$
\begin{array}{c}
(1+t)\G{p}_+(X,X)+ \G{p}_-(X,X)=0\\
(1+t)\G{p}_+(X,X)- \G{p}_-(X,X)=0.
\end{array}
$$
In solving this there are two cases:
if $t\neq -1$ then $\G{p}_+(X,X)= \G{p}_-(X,X)=0$ whence $\G{p}(X,X)=0$; if $t=-1$ then 
we have the weaker requirement $\G{p}_-(X,X)=0$.
 Since the condition $2J\G{p}(X,X)=(\con{p}_X J)X=0$, for all $X\in
\Gamma(TM)$, is an analogue of the nearly K\"ahler condition (on
almost Hermitian structures), if this holds we shall say the structure
$(M,J,p)$ is {\em projective nearly K\"ahler}.  We summarise as
follows.
\begin{theorem}\label{tprop}
The connection $\con{p,t}$ defined by
$$
\con{p,t}_X Y: =\con{gp}_X Y+t \G{p}_+(X,Y) 
$$
is almost complex. It has the same path structure as 
$(M,J,p)$ if and only if:
\begin{itemize}
\item $t\neq -1$ and the manifold is projective nearly K\"ahler, i.e.\
$$
(\con{p}_X J)X=0 \quad \mbox{ for all } \quad X\in \Gamma(TM);
$$ 
\item $t= -1$ and the manifold satisfies 
\begin{equation}\label{pcond}
(\con{p}_X J)X=(\con{p}_{JX}J)(JX)  \quad \mbox{ for all } \quad X\in \Gamma(TM).
\end{equation}
\end{itemize}
\end{theorem}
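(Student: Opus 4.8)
The plan is first to dispose of the almost complex claim and then to establish the ``if and only if'' by pinning down exactly which parametrised curves the geodesics of $\con{p,t}$ are. That $\con{p,t}$ is almost complex for every $t$ is immediate: it is the connection $\con{t}$ of Proposition~\ref{gencon} built with $\con{p}$ as the initial affine connection, since $\con{p,t}_XY=\con{gp}_XY+t\,\G{p}_+(X,Y)$ and, by Lemma~\ref{Gprop} (equation \nn{Jlin}), $\G{p}_+$ is complex linear in its first argument, so adding a multiple of $\G{p}_+$ preserves $\con{gp}J=0$.

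For the main assertion I would proceed as follows. Since $\con{p,t}_XY-\con{p}_XY=\G{p}(Y,X)+t\,\G{p}_+(X,Y)$, the geodesic spray of $\con{p,t}$ equals that of $\con{p}$ plus the quadratic vector field $X\mapsto \G{p}(X,X)+t\,\G{p}_+(X,X)$. Assume $\con{p,t}$ is compatible with the path structure $p$, i.e.\ its geodesics coincide with those of some $\nabla'\in p$; by \nn{ptrans}, $\nabla'$ differs from $\con{p}$ by $\Upsilon_aY^b+\Upsilon_cY^c\delta^b_a$ for a $1$-form $\Upsilon$, so equating sprays gives
$$
\G{p}(X,X)+t\,\G{p}_+(X,X)=2\Upsilon(X)X\qquad\text{for all }X .
$$
The crux is to show $\Upsilon=0$. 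Evaluating this identity also at $JX$ and using $\G{p}_+(JX,JX)=\G{p}_+(X,X)$, $\G{p}_-(JX,JX)=-\G{p}_-(X,X)$ (consequences of \nn{Jlin}, \nn{Jlin2}) together with $\G{p}=\G{p}_++\G{p}_-$, adding the two relations yields $(1+t)\G{p}_+(X,X)=\Upsilon(X)X+\Upsilon(JX)JX$. Polarising in $X$ and taking the trace over the first argument, the left-hand side vanishes (trivially if $t=-1$, and because $\G{p}_+$ is trace-free in each argument otherwise -- here one uses Lemma~\ref{tffs} together with the fact that $\con{p}$ is the $J$-compatible connection of Proposition~\ref{projJ}, so that Lemma~\ref{compl} applies), while the right-hand side computes to $n\Upsilon$ since $J$ is trace-free; hence $\Upsilon=0$ and $\G{p}(X,X)+t\,\G{p}_+(X,X)=0$, equivalently $(1+t)\G{p}_+(X,X)=0$ and $\G{p}_-(X,X)=0$. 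Conversely, if one of the two listed alternatives holds then these two conditions hold (for $t\neq-1$ one gets $\G{p}(X,X)=0$; for $t=-1$ only $\G{p}_-(X,X)=0$, which is \nn{pcond} after rewriting $(\con{p}_XJ)X=2J\,\G{p}(X,X)$ via \nn{Gform}), so $\con{p,t}$ and $\con{p}$ share geodesics and $\con{p,t}$ is compatible with $p$. Finally, splitting $(1+t)\G{p}_+(X,X)=0$, $\G{p}_-(X,X)=0$ into $t\neq-1$ (forcing $\G{p}(X,X)=0$, i.e.\ projective nearly K\"ahler) and $t=-1$ (forcing only $\G{p}_-(X,X)=0$, i.e.\ $(\con{p}_XJ)X=(\con{p}_{JX}J)(JX)$) recovers the two cases; this last part is exactly the computation displayed in the paragraph preceding the statement.

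The step I expect to be the main obstacle is precisely the vanishing of $\Upsilon$: one must verify that agreement of the path structures forces the correction term to vanish identically, rather than merely to be of ``pure trace'' type $2\Upsilon(X)X$, and this rests entirely on the total trace-freeness of $\G{p}$, which is available only because $\con{p}$ is the distinguished $J$-compatible connection singled out in Proposition~\ref{projJ}. Everything else is the routine bookkeeping with the $\G{p}_\pm$ identities of Lemma~\ref{Gprop}.
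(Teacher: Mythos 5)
Your proof is correct. The core of it---writing the difference $\con{p,t}-\con{p}$ on the diagonal, evaluating the resulting quadratic identity at both $X$ and $JX$ using $\G{p}_+(JX,JX)=\G{p}_+(X,X)$ and $\G{p}_-(JX,JX)=-\G{p}_-(X,X)$, and then splitting into the cases $t\neq-1$ and $t=-1$---is exactly the computation the paper carries out in the paragraph preceding the statement, and your converse direction and the identification of \eqref{pcond} with $\G{p}_-(X,X)=0$ are likewise the paper's. Where you genuinely differ is in handling the projective freedom $\Upsilon$. The paper's in-text argument only determines when $\con{p,t}$ has the same geodesics as $\con{p}$ itself (so $\Upsilon=0$ from the outset), and the possibility that the geodesics might instead match some other $\nabla'\in p$ is addressed only later, in the proof of Theorem \ref{projR}, where $\Upsilon=0$ is forced by exploiting $J$-linearity of the difference tensor evaluated on the pair $(JX,X)$. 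You instead eliminate $\Upsilon$ on the spot: polarising $(1+t)\G{p}_+(X,X)=\Upsilon(X)X+\Upsilon(JX)JX$ and contracting the output index against a lower index gives $0=n\Upsilon$, since $\G{p}_+$ is trace-free in both available traces (Lemma \ref{tffs} for one, and Lemma \ref{compl} together with the compatibility of $\con{p}$ from Proposition \ref{projJ} for the other), while the right-hand side contributes $(1+n-1+0)\Upsilon=n\Upsilon$ because $J$ is trace-free and $J^2=-1$. That trace computation checks out. The payoff of your route is that the ``only if'' direction of the theorem as literally stated (same \emph{path structure}, not merely same geodesics as $\con{p}$) is proved self-containedly, where the paper defers that point; the cost is the extra reliance on the total trace-freeness of $\G{p}_+$, which is available only for the distinguished compatible connection.
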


  It is evident from the Theorem that the connection 
$$
\con{JP}:=\con{p,-1}
$$ is distinguished, since it imposes the weakest condition on the
structure $(M,J,p)$ in order to have the geodesics from the
distinguished set of Corollary \ref{pgeod}. Since the value of any of
these connections is limited without path structure preservation we
make the following definition.
\begin{definition}\label{PJcomp}
On an even manifold $M$ let $J$ be an almost complex structure and $p$
a projective structure. We shall say that $p$ and $J$ are {\em compatible} if 
$$
\G{p}_-(X,X)=0, \quad \forall X ,
$$ or equivalently if \nn{pcond} holds. When this holds we shall also say
that $(M,J,p)$ is a compatible projective almost complex structure.
\end{definition}

Then for emphasis we may state the following. 
\begin{corollary}\label{punch}
On any compatible projective almost complex structure $(M,J,p)$ the
canonical connection $\con{JP}$ preserves $J$ and has as geodesics the
distinguished curves determined by $p$ and $J$ (as in Corollary
\ref{pgeod}).
\end{corollary}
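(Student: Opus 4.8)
The plan is to obtain the corollary as an essentially immediate consequence of Theorem~\ref{tprop} together with Definition~\ref{PJcomp}; the proof will be short. Two assertions must be checked: that $\con{JP}$ preserves $J$, and that its affinely parametrised geodesics are the distinguished curves of Corollary~\ref{pgeod}.

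For the first assertion there is nothing to compute: $\con{JP}=\con{p,-1}$ is a member of the family $\con{p,t}$, and Theorem~\ref{tprop} --- equivalently Proposition~\ref{gencon} applied with $\con{gp}$ (from Proposition~\ref{generp}) as the initial connection --- already records that every $\con{p,t}$ is almost complex. For the second assertion I would argue directly. Using $\con{gp}_X Y=\con{p}_X Y+\G{p}(Y,X)$ and $\G{p}=\G{p}_++\G{p}_-$, setting both arguments equal and taking $t=-1$ gives
$$
\con{JP}_X X-\con{p}_X X=\G{p}(X,X)-\G{p}_+(X,X)=\G{p}_-(X,X).
$$
By hypothesis $(M,J,p)$ is a compatible projective almost complex structure, so Definition~\ref{PJcomp} says precisely that $\G{p}_-(X,X)=0$ for all $X$; hence $\con{JP}_X X=\con{p}_X X$ identically, so the two connections have the same geodesics as parametrised curves, and these are exactly the distinguished curves determined by $p$ and $J$ in Corollary~\ref{pgeod}.

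The only point that deserves care --- and it is minor --- is the distinction between agreement of paths and agreement of affinely parametrised geodesics: Theorem~\ref{tprop} is phrased in terms of path structure, whereas the corollary asks for the parametrised geodesics, and it is the displayed identity (as in the computation just before Theorem~\ref{tprop}) that actually supplies the stronger statement. Beyond that there is no real obstacle; the corollary is simply the reinterpretation of the $t=-1$ branch of Theorem~\ref{tprop}, using that the hypothesis \nn{pcond} there is by definition the compatibility condition.
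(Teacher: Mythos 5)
Your proposal is correct and is essentially the argument the paper intends: the corollary is the $t=-1$ case of Theorem \ref{tprop}, and your displayed identity $\con{JP}_X X-\con{p}_X X=\G{p}_-(X,X)$ is exactly the computation carried out in the paragraph preceding that theorem. Your remark that this identity yields agreement of parametrised geodesics (not merely of paths) is a fair and accurate reading of what the paper's computation actually establishes.
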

\begin{remark}
The term ``compatible'' as defined here is not easily confused with
its use to describe properties of a particular affine connection, as
defined in Section \ref{comS}.

  Together equation \nn{pcond} and the compatibility equation
  $\con{p}_a J^a{}_b=0$ are {\em formal} analogues of the equations
  used by Gray and Hervella \cite{GrayH} to characterise the class
  $W_1\oplus W_3$ in almost Hermitian geometry. It is more closely an
  analogue of the conformal class $W_1\oplus W_3\oplus W_4 $.

  The $t=1$ case of Theorem \ref{tprop} recovers the projectively
  canonical version of the $\con{KN}$ connection which has torsion
  precisely agreeing with the Nijenhuis tensor.
\end{remark}

\begin{lemma}\label{diffT}
Two affine connections $\nabla$ and $\nabla'$ have the same geodesics
if and only if their difference tensor is an algebraic torsion tensor.
\end{lemma}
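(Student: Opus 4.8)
The plan is to work directly with the difference tensor. Since the non-tensorial parts of the Christoffel symbols of two connections agree, the assignment $D(X,Y):=\nabla'_X Y-\nabla_X Y$ defines a genuine $(1,2)$-tensor field, and the whole statement should reduce to one elementary observation: for any curve $\gamma$ with tangent field $\dot\gamma$ one has $\nabla'_{\dot\gamma}\dot\gamma=\nabla_{\dot\gamma}\dot\gamma+D(\dot\gamma,\dot\gamma)$, so the two geodesic equations literally coincide exactly when $D(X,X)=0$ for every tangent vector $X$. By polarisation this last condition is in turn equivalent to $D$ being skew in its two arguments, i.e.\ to $D$ being an algebraic torsion tensor (a section of $TM\otimes\Lambda^2 T^*M$, in the terminology of Theorem \ref{torsionB}).

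For the ``if'' direction I would simply note that if $D(X,Y)=-D(Y,X)$ then $D(\dot\gamma,\dot\gamma)\equiv 0$, so $\nabla_{\dot\gamma}\dot\gamma=0$ holds if and only if $\nabla'_{\dot\gamma}\dot\gamma=0$; hence $\nabla$ and $\nabla'$ have precisely the same parametrised geodesics. For the converse I would use the standard existence of geodesics with prescribed $1$-jet: given $p\in M$ and $v\in T_pM$, let $\gamma$ be the $\nabla$-geodesic with $\gamma(0)=p$ and $\dot\gamma(0)=v$; by hypothesis it is also a $\nabla'$-geodesic, so evaluating both geodesic equations at $t=0$ gives $0=\nabla'_{\dot\gamma}\dot\gamma|_{t=0}=\nabla_{\dot\gamma}\dot\gamma|_{t=0}+D_p(v,v)=D_p(v,v)$. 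Since $p$ and $v$ are arbitrary, $D(X,X)\equiv 0$, and by the reduction above $D$ is an algebraic torsion tensor.

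I do not expect a real obstacle here: the argument is entirely local and uses only first-order data along geodesics. The only point requiring care --- and essentially the only content of the lemma --- is to keep clear that ``the same geodesics'' means the same curves \emph{with their parametrisations}, in contrast with the projective equivalence \nn{ptrans}, where merely the unparametrised paths agree and the difference tensor has instead the symmetric pure-trace form $\Upsilon_a\delta^b_c+\Upsilon_c\delta^b_a$. In the write-up I would also record that, with the usual sign conventions, this skew $D$ is exactly (half) the difference of the torsions of $\nabla'$ and $\nabla$, which is what makes the name ``algebraic torsion tensor'' appropriate.
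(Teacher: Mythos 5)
Your proposal is correct and follows essentially the same route as the paper: reduce the statement to $D(X,X)=0$ along geodesic tangents, use skewness for the ``if'' direction, and use existence of a geodesic with prescribed initial tangent (equivalently, the paper's contrapositive with $T(X_q,X_q)\neq 0$ at some point) for the converse. The polarisation remark and the observation that the skew difference tensor is half the difference of the torsions are consistent with the paper's normalisation $\nabla'_XY=\nabla_XY+\tfrac12 T(X,Y)$.
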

\begin{proof}
Let $T$ be a section of $TM\otimes (\Lambda^2 T^*M)$. Then it is clear
that $\nabla'$ defined by $\nabla'_XY=\nabla_XY+\frac{1}{2}T(X,Y)$ has
the same geodesics as $\nabla$.

Conversely suppose that $\frac{1}{2}T$ is the difference tensor given by
$\nabla'-\nabla$ and $T$ is not a section of $TM\otimes (\Lambda^2
T^*M)$. In this case there exists a point $q\in M$ and $X_q\in T_qM$ such that
at $q$ we have $T(X_q,X_q)\neq 0$. Then the $\nabla$-geodesic through
$q$ that has tangent there $X_q$ is not a geodesic for $\nabla'$.
\end{proof}

In the following we shall use the following notation. Given a $(1,2)$ tensor
$H$ we will denote its symmetric  and skew parts by $H^{\rm symm}$ and $H^{\rm skew} $, respectively. That is
$$
H^{\rm symm}(X,Y)=\frac{1}{2}(H(X,Y)+H(Y,X)), 
\quad H^{\rm skew}(X,Y)=\frac{1}{2}(H(X,Y)-H(Y,X)).
$$

The interpretation of the solution leading to Theorem \ref{tprop} is
clear via the Lemma \ref{diffT}. The difference
$(\con{p,t}-\con{p})(X,Y)$ is $\G{p}(Y,X)+t\G{p}_+(X,Y)$. So for
example at $t=-1$ this is $\G{p}^{\rm skew}_+(Y,X)+
\G{p}_-(Y,X)$. Thus the $J,p$ compatibility condition $\G{p}_-^{\rm symm}=0$ is
exactly what is required to achieve a skew difference tensor, as
required since $\con{p}$ torsion free.

At this point it is reasonable to ask if there is an almost complex
connection with the same geodesics as $\con{p}$, without imposing the
$J$, $p$ compatibility condition $\G{p}_-^{\rm symm}=0$. In other
words whether might be some connection that is ``better'' than
$\con{JP}$ in this sense. 
We shall now  show that there is no such connection. 
\begin{theorem}\label{projR}
Let $(M,J,p)$ be a projective almost complex structure. Suppose that
there exists an affine connection $\nabla'$ such that $\nabla'$ is
almost complex and $\nabla'$ has geodesics in the projective class
$p$.  Then $J$ and $p$ are compatible, that is
$$
\G{p}^{\rm symm}_-=0,
$$
and 
$$
\nabla'=\con{JP}+\frac{1}{2}T
$$
where $T$ is an anti-Hermitian algebraic torsion tensor (that is 
$T(X,Y)=-T(Y,X)$ and $T(JX,JY)=-T(X,Y)$) and $\nabla'$ has the same geodesics as $\con{p}$.
\end{theorem}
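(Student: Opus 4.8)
The plan is to compare $\nabla'$ with the two canonical connections $\con{p}$ and $\con{JP}$, working throughout with the $(1,2)$ difference tensor $D(X,Y):=\nabla'_XY-\con{p}_XY$ and exploiting the decomposition of tensors into parts that are complex linear or complex antilinear in each argument. I would first record two structural facts about $D$. By Lemma~\ref{diffT}, the hypothesis that the geodesics of $\nabla'$ lie in $p$ --- hence (since $\con{p}$ is the distinguished representative of $p$ and its geodesics are the preferred curves of Corollary~\ref{pgeod}) agree with those of $\con{p}$ --- forces $D$ to be an algebraic torsion tensor: $D(X,Y)=-D(Y,X)$. Independently, $\nabla'J=0$ together with the ``moreover'' part of Proposition~\ref{gener} applied with $\nabla=\con{p}$ (so that $\con{G}=\con{gp}$ and $G=\G{p}$, by Proposition~\ref{generp}) gives $\nabla'_XY=\con{gp}_XY+K(Y,X)$ with $K$ complex linear in its first argument; equivalently $D(X,Y)=\G{p}(Y,X)+K(Y,X)$, so the part of $D$ that is complex antilinear in its second slot is exactly $\G{p}$ with its arguments transposed, while the part complex linear in the second slot is otherwise free.

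Next I would play these two facts against one another. Decompose $D$ into its four pieces according to complex (anti)linearity in each of the two slots. By the above, the two pieces antilinear in the second slot are $\G{p}_+$ and $\G{p}_-$ with arguments transposed, which --- using Lemma~\ref{Gprop} ($\G{p}_+$ complex antilinear/linear in the first/second argument, $\G{p}_-$ complex antilinear in both) --- are of types (linear, antilinear) and (antilinear, antilinear) respectively; write the remaining, second-slot-linear part of $D$ as a sum of a Hermitian piece (type (antilinear, linear)) and a complex-bilinear piece $B$ (type (linear, linear)). Now impose skewness of $D$ and split the identity $D(X,Y)+D(Y,X)=0$ into its four types, noting that the transposition $X\leftrightarrow Y$ interchanges the two mixed types and preserves the other two. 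The (antilinear, antilinear) part gives $\G{p}_-(X,Y)+\G{p}_-(Y,X)=0$, i.e.\ $\G{p}^{\rm symm}_-=0$; by Definition~\ref{PJcomp} this is exactly the $J$--$p$ compatibility condition \eqref{pcond}. The mixed part identifies the Hermitian piece of $D$ with $-\G{p}_+$, and the (linear, linear) part shows that $B$ is itself skew.

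Reassembling, and using $\con{gp}_XY=\con{p}_XY+\G{p}(Y,X)$ (Proposition~\ref{generp}) and $\con{JP}_XY=\con{gp}_XY-\G{p}_+(X,Y)$ (Theorem~\ref{tprop} at $t=-1$), the contributions involving $\G{p}_+$ and $\G{p}_-$ cancel and one is left with $\nabla'_XY=\con{JP}_XY+B(X,Y)$. Setting $T:=2B$, the tensor $T$ is an algebraic torsion tensor (being skew) and it is anti-Hermitian: $B$ is complex bilinear, so $B(JX,JY)=J^2B(X,Y)=-B(X,Y)$. Hence $\nabla'=\con{JP}+\tfrac12T$ with $T$ anti-Hermitian algebraic torsion. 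Finally, since $\G{p}^{\rm symm}_-=0$, Theorem~\ref{tprop} (its $t=-1$ branch) shows $\con{JP}$ has the same geodesics as $\con{p}$; Lemma~\ref{diffT} applied to $\nabla'=\con{JP}+\tfrac12T$ then shows $\nabla'$ has the same geodesics as $\con{JP}$, hence as $\con{p}$, which closes the argument.

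The step I expect to be the main obstacle is the bookkeeping in the second paragraph: one must track which complex type each transposed tensor carries and how the four-way decomposition behaves under $X\leftrightarrow Y$. It is all multilinear algebra in $J$ built on Lemmas~\ref{Gprop} and~\ref{diffT}, but it is the place where the theorem is genuinely proved --- it is the only point where skewness of $D$ (the content of the path-structure hypothesis) is used, and it is precisely what forces the compatibility invariant $\G{p}^{\rm symm}_-$ to vanish.
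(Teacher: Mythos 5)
Your second and third paragraphs are sound and are essentially the paper's own argument in finer bookkeeping: where you split $D$ into four complex types and read off the (antilinear, antilinear) component of the skewness identity, the paper takes the symmetric part of $\G{p}(Y,X)+K(Y,X)=\tfrac12 T(Y,X)$ and then plays its anti-Hermitian projection against the substitution $X\mapsto JX$; both routes give $\G{p}^{\rm symm}_-=0$ and then $\nabla'=\con{JP}+\tfrac12 T$ with $T$ skew and anti-Hermitian. The problem is in your first paragraph, in the clause ``hence \dots agree with those of $\con{p}$''. The hypothesis is only that the geodesics of $\nabla'$ lie in the set $p$, i.e.\ each is a geodesic of \emph{some} torsion-free connection in the projective class, not necessarily of $\con{p}$. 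Concretely, $\widehat\nabla:=\nabla'-\tfrac12\Tor\nabla'$ is torsion free with the same parametrised geodesics as $\nabla'$, so it is related to $\con{p}$ by \nn{ptrans} for some 1-form $\Upsilon$ which you have no right to assume vanishes; if $\Upsilon\neq0$ then $D=\nabla'-\con{p}$ has symmetric part $\Upsilon_a\delta^b_c+\Upsilon_c\delta^b_a$ and is \emph{not} an algebraic torsion tensor, so Lemma \ref{diffT} does not deliver the skewness of $D$ on which your entire decomposition rests. The fact that $\con{p}$ is the distinguished representative is irrelevant here: that the geodesics of $\nabla'$ agree with those of $\con{p}$ (rather than merely with the unparametrised path structure) is part of the \emph{conclusion} of the theorem, not a consequence of its hypothesis.

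Closing this gap requires invoking the almost-complex hypothesis a second time. The paper does so as follows: since $\nabla'$ differs from the connection $Y^b\mapsto\con{JP}_aY^b+\Upsilon_aY^b+\Upsilon_cY^c\delta^b_a$ by an algebraic torsion tensor and both $\con{JP}$ and $\nabla'$ preserve $J$, the map $X\mapsto X\,\Upsilon(Y)+Y\,\Upsilon(X)+T(X,Y)$ must be $J$-linear for each fixed $Y$; taking $Y=JX$ and evaluating at $JX$ yields $(JX)\,\Upsilon(JX)=-X\,\Upsilon(X)$ for all $X$, forcing $\Upsilon=0$ and reducing to the case you treated. (Alternatively, you could rerun your four-type decomposition with the weaker information that $D^{\rm symm}$ equals the $\Upsilon$-term above; its identity-component then forces $\Upsilon=0$.) Either way, this step must be supplied before your argument starts, since without it neither the skewness of $D$ nor the final claim that $\nabla'$ has the same geodesics as $\con{p}$ is available.
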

\begin{proof}
Let us assume that $\nabla'$ preserves $J$ and actually has the same
geodesics as $\con{p}$. Then since $\nabla'$ preserves $J$, and using Proposition
\ref{gener}, we have that
$$
\nabla'_XY=\con{gp}_XY+K(Y,X)
$$
where $K$ is a $(1,2)$ tensor which is complex linear in the first argument.
Now using that  $\nabla'$ has the same geodesics as $\con{p}$
then, using the Lemma \ref{diffT}, this implies that 
$$
\G{p}(Y,X)+K(Y,X) =\frac{1}{2}T(Y,X)
$$ 
for some $(1,2)$ tensor $T$ that is skew, i.e.,
$T(X,Y)=-T(Y,X)$. Taking the symmetric part of both sides gives
\begin{equation}\label{key1}
\G{p}^{\rm symm}+K^{\rm symm}=0.
\end{equation}
Further taking the anti-Hermitian part we have 
\begin{equation}\label{key2}
\G{p}_-^{\rm
  symm}+K^{\rm symm}_-=0 .
\end{equation}
Now from \nn{key1} we have 
$$
\G{p}(JX,X)+\G{p}(X,JX) +K(JX,X)+K (X,JX)=0.
$$
Multiplying through with $J$ gives
$$
\G{p}(X,X)-\G{p}(JX,JX) - K(X,X)+K (JX,JX)=0,
$$
where we have used $\G{p}(\cdot,X)$ is complex anti-linear, while $K(\cdot,X)$
is complex linear.
Thus 
$$
\G{p}^{\rm symm}_-- K^{\rm symm}_-=0,
$$ which with \nn{key2} implies that $\G{p}^{\rm symm}_-=0$, as
claimed.  Since $(M,J,p)$ is thus seen to be a compatible projective
almost complex structure, $\con{JP}$ is a connection with the same
geodesics as $\con{p}$. Thus $\nabla'=\con{JP}+\frac{1}{2}T$ for some
algebraic torsion tensor field $T$. Using now that both $\nabla'$ and 
$\con{JP}$ preserve $J$ it follows immediately that $T$ is anti-Hermitian.

In the above we assumed that $\nabla'$ has the same geodesics as
$\con{p}$. Let us now relax that and assume only that $\nabla'$ (is an
almost complex connection that) has geodesics in the set $p$ given by
the projective structure. Then
$\widehat{\nabla}:=\nabla'-\frac{1}{2}\Tor \nabla'$ is a torsion free
connection with the same geodesics as $\nabla'$ and so
$$
\widehat\nabla_a Y^b = \con{p}_a Y^b +\Upsilon_a Y^b + \Upsilon_c Y^c \delta^b_a,
$$
for some 1-form field $\Upsilon$. Adding ($\frac{1}{2}$ of) the torsion of 
$\con{JP}$, it now follows that 
$$
\con{JP}_a Y^b +\Upsilon_a Y^b + \Upsilon_c Y^c \delta^b_a
$$ 
has the same geodesics as $\nabla'$. The difference between this
connection and $\nabla'$ is some algebraic torsion tensor field,
while both $\con{JP}$ and $\nabla'$ preserve $J$. Thus it is the case
that there is an algebraic torsion tensor field $T$ so that for any fixed $Y$
$$
X\mapsto X\cdot \Upsilon (Y)+ Y\cdot \Upsilon(X) + T(X,Y)
$$ is $J$-linear.  In particular then taking $Y=JX$ and applying this
map to $JX$, the $J$-linearity implies that
$$
2(JX)\cdot \Upsilon (JX)=J(X\cdot \Upsilon (JX))+ J((JX)\cdot \Upsilon (X) )
$$
and so 
$$
(JX)\cdot \Upsilon (JX)=-X\cdot \Upsilon (X), \quad \forall X.
$$
This implies $\Upsilon=0$. Hence $\nabla'$ has the same geodesics as $\con{p}$ and we are reduced to the situation treated first. 
 \end{proof}

Finally, using the notation of the Lemma \ref{diffT}, observe that if
$\nabla'_XY=\nabla_XY+\frac{1}{2}T(X,Y)$ then $T=\Tor
\nabla'-\Tor\nabla$.  Thus the space of connections with the same
geodesics as $\con{p}$ is an affine space through $\con{p}$ and
modelled on the vector space of algebraic torsion tensor fields. In
particular the connections in the class are uniquely parametrised by
their torsion. Thus we have the following result.
\begin{corollary}\label{torch}
On a compatible projective almost complex structure 
$\con{JP}$ is the unique almost complex connection with the same geodesics
as $\con{p}$, and torsion 
$$
-\frac{1}{2}\G{p}^{\rm skew}.
$$
\end{corollary}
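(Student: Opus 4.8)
The plan is to deduce the statement from Corollary \ref{punch}, from Lemma \ref{diffT} together with the affine parametrisation of geodesic-preserving connections recorded just above the statement, and from a short computation identifying the torsion of $\con{JP}$. None of these three ingredients is deep; the corollary is essentially a packaging of them.

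Since $(M,J,p)$ is assumed compatible we have $\G{p}_-^{\rm symm}=0$, so Corollary \ref{punch} (the $t=-1$ case of Theorem \ref{tprop}) applies: $\con{JP}=\con{p,-1}$ preserves $J$ and its geodesics are exactly the distinguished curves of $p$; in particular $\con{JP}$ lies in the class of connections having the same geodesics as $\con{p}$.

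Next I would compute $\Tor(\con{JP})$. Since $\con{JP}=\con{p,-1}$, unwinding the definitions gives $\con{JP}_XY=\con{gp}_XY-\G{p}_+(X,Y)=\con{p}_XY+\bigl(\G{p}(Y,X)-\G{p}_+(X,Y)\bigr)$, and since $\con{p}$ is torsion free the torsion of $\con{JP}$ is the part of the difference tensor $(X,Y)\mapsto\G{p}(Y,X)-\G{p}_+(X,Y)$ that is skew in $X,Y$, exactly as in \nn{Ttf}. I would then split into Hermitian and anti-Hermitian parts via Lemma \ref{Gprop}, and into symmetric and skew parts, and use that compatibility is precisely the statement $\G{p}_-^{\rm symm}=0$ (so that $\G{p}_-$, and indeed the whole difference tensor, is already skew); collecting terms then leaves $\Tor(\con{JP})=-\tfrac12\G{p}^{\rm skew}$.

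Finally I would conclude by uniqueness. By Lemma \ref{diffT} and the identity $T=\Tor\nabla'-\Tor\nabla$ noted above, the connections with the same geodesics as $\con{p}$ form an affine space through $\con{p}$ modelled on the algebraic torsion tensor fields, hence each is determined by its torsion; as $-\tfrac12\G{p}^{\rm skew}$ is skew it is such a torsion, so there is exactly one connection with geodesics those of $\con{p}$ and with that torsion, and by the previous two steps it is $\con{JP}$, which moreover preserves $J$. This gives the stated characterisation, the restriction to almost complex connections being in fact automatic. I expect the only real work to be the bookkeeping in the torsion computation: keeping straight the several argument-order conventions for $\con{gp}$, $\con{p,t}$ and $\G{p}_\pm$, together with the factor-of-two conventions in $H^{\rm symm/skew}$ and in $\Tor$, so that the constant $-\tfrac12$ emerges correctly; conceptually nothing more is needed once Corollary \ref{punch} and the affine parametrisation are in hand.
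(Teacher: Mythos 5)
Your proposal is correct and follows essentially the same route as the paper: the paper's justification is precisely the observation that connections sharing the geodesics of $\con{p}$ form an affine space modelled on algebraic torsion tensors (Lemma \ref{diffT} plus $T=\Tor\nabla'-\Tor\nabla$), hence are parametrised by their torsion, combined with Corollary \ref{punch} and the identification of the difference tensor $\con{JP}-\con{p}$ with (a multiple of) $\G{p}^{\rm skew}$ under the compatibility hypothesis $\G{p}_-^{\rm symm}=0$. The only caveat is the one you yourself flag: the overall constant depends on the various factor-of-two conventions, and the paper is no more explicit about this bookkeeping than you are.
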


\subsection{Projective analogues of the conformal Gray-Hervella 
types}\label{pGH}

Note that $\G{p}$ is trace-free, but in an obvious way this admits an
$SL(m)$ type decomposition into the parts 
$$ \G{p}_+^{\rm symm}, \quad\G{p}_+^{\rm skew},\quad \G{p}_-^{\rm
  symm}, \quad\mbox{and}\quad \G{p}_-^{\rm skew}
$$ 
where for example $\G{p}_+^{\rm
  skew}(X,Y)=\frac{1}{2}(\G{p}_+(X,Y)-\G{p}_+(Y,X))$. It is easily
verified that in dimensions greater than 2 these four components are
functionally independent (see Section \ref{EXS}).

We summarise here how some of the available projectively invariant
conditions we can impose, in terms of this decomposition, correspond
to analogous conditions in the conformal Gray-Hervella classification.

\medskip

\begin{center}
\begin{tabular}{||l|l|l||}
\hline
\hline
Conformal Gray-Hervella type & Usual name & Analogous projective condition\\
\hline
\hline
$\G{c}=0$ & LCK & $\G{p}=0$ \\
\hline
 $\G{c}(X,X)=0$ & NKW & $\G{p}(X,X)=0$ \\
\hline
$\Alt \Big(g(\cdot,J\G{c}(\cdot,\cdot)\Big)=0 $ & LCAK
& none available \\
\hline
$\G{c}=\G{c}_+$ & Hermitian & $\G{p}=\G{p}_+$ \\
\hline
$\G{c}=\G{c}_-$ & &$\G{p}=\G{p}_-$\\
\hline
$\G{c}_-(X,X)=0$ & &$\G{p}_-(X,X)=0$\\
\hline
satisfied identically & &$\G{p}_+(X,X)=0$ \\
\hline
$\Alt \Big(g(\cdot,J\G{c}_{-} (\cdot,\cdot)\Big)=0$ &
& none available \\
\hline
\hline
\end{tabular}
\end{center}

\medskip

\noindent Recall the abbreviations:\\ CNK -- Conformally nearly
K\"ahler; LCAK -- Locally conformally almost K\"ahler; LCK -- Locally
conformally K\"ahler; NKW -- Near K\"ahler Weyl.

Note that the condition $\G{p}_-^{\rm skew}=0$ is the condition for a
projective almost complex structure to be integrable, that is $N_J=0$.
Thus this is implied by (but weaker than) the ``Hermitian'' condition
$\G{p}=\G{p}_+$.  A projective almost complex structure $(M,J,p)$ is
compatible if and only if $\G{p}_-^{\rm symm}=0$. Thus for a
compatible projective almost complex structure $(M,J,p)$ the
integrability condition and the ``Hermitian'' condition
$\G{p}=\G{p}_+$ agree (as in the conformal case). 

The condition $\G{p}_+(X,X)=0$ is one example of many conditions one
could impose in the projective case for which there is no conformal
analogue.

\subsection{Higher projective Invariants} \label{pinvt}

On any smooth $n$-manifold $M$ the highest exterior power of the
tangent bundle $(\Lambda^nTM)$ is a line bundle.  As discussed in
section \ref{ci}, for any smooth $n$-manifold $M$ $(\Lambda^nTM)^2$ is
an orientable line bundle. Again we choose an orientation and a root:
For our subsequent discussion it is convenient to take the positive
$(2n+2)^{th}$ root of $(\Lambda^nTM)^2$ and we denote this $K$ or
$E(1)$. Then for $w\in \mathbb{R}$ we denote $K^w$ by $E(w)$. Sections
of $E(w)$ will be described as {\em projective densities} of weight
$w$.

Now we consider a projective manifold $(M,p)$.  Each connection
$\nabla\in p$ determines a connection (also denoted $\nabla$) on
$(\Lambda^nTM)^2$ and hence on its roots $E(w)$, $w\in
\mathbb{R}$.   For $\nabla\in p$ let us (temporarily)
denote the connection induced on $E(1)$ by $D^\nabla$, and write
$-F^\nabla$ for its curvature.  
It is easily verified that, under the transformation \nn{ptrans},
$D$ transforms according to 
$$
D^{\widehat{\nabla}}_a= D^\nabla_a + \Upsilon_a,
$$
where we view $\Upsilon_a$ as a multiplication operator.
Since the connections on $E(1)$ form an affine space
modelled on $\Gamma(T^*M)$ it follows that 
by moving around in $p$ we can hit any connection on $E(1)$, and conversely 
a choice of connection on
$E(1)$ determines a connection in $p$.

Now $E(1)$ is a trivial bundle and any chosen trivialisation
determines a flat connection on $E(1)$ in the obvious way. Such a
connection will be called a {\em scale}. It follows that there is a
special class $s$ of connections in $p$: $\nabla\in s$ if and only if
$D^\nabla$ is a scale; if $D^\nabla$ is a scale we shall also call
$\nabla $ a scale.  Again using that $E(1)$ is a trivial bundle it
follows that $\nabla $ is a scale if and only if $F^\nabla=0$.

 Since 
\begin{equation}\label{curvt}
F^{\widehat{\nabla}}=F^\nabla- d \Upsilon 
\end{equation}
it is clear that if
$\nabla$ and $\widehat{\nabla}$ are both scales then $d \Upsilon=0$; in fact from the definition of scales $\Upsilon$ is then exact.
We will henceforth drop the notation $D^\nabla$ and write $\nabla$ for
any connection, induced by $\nabla\in p$, on densities, tensor bundles
and so forth. 

We now consider a projective almost complex manifold $(M,J,p)$; this
is oriented by $J$. We have then the preferred connection $\con{p}$ on
$TM$. Thus the curvature $\stack{p}{R}$ of $\con{p}$ is a (projective)
invariant of the structure. Consider now $\con{p}$ as a connection on
$E(n+1)$. From the definition of $E(n+1)$ it follows that the
curvature of $\con{p}$ on this is the trace of $\stack{p}{R}$, as given:
$$
\stack{p}{R}_{ab}{}^c{}_c~.
$$ Whence the curvature of $\con{p}$ on $K=E(1)$ is
$-\stack{p}{F}:=-F^{\con{p}}$ where $-(n+1)\stack{p}{F}=
\stack{p}{R}_{ab}{}^c{}_c$.  For comparison with other discussions of
projective geometry, such as e.g.\ \cite{BEG} we note that since
$\con{p}$ is torsion free, $\stack{p}{R}_{ab}{}^c{}_d$ satisfies the
first Bianchi identity and hence $\stack{p}{R}_{ab}{}^c{}_c=-2
\Ric_{[ab]}$ (where $[\cdots ]$ indicates the skew part). So
$(n+1)\stack{p}{F}= 2 \Ric_{[ab]}$ (and thus
$\stack{p}{F}=-\stack{p}{\beta}$, where $\stack{p}{\beta}$ is defined
in Section 3 of \cite{BEG}).

The curvature $\stack{p}{F}$ is a fundamental invariant of the
structure. By the Bianchi identity this 2-form is closed
$d\stack{p}{F}=0 $. An obvious question is whether it has
cohomological content.
Suppose we  choose a scale $\nabla$ in $p$ and compare its curvature (which
is 0) with that of $\con{p}$. Using \nn{curvt} we obtain the following.
\begin{theorem} \label{pcon} Let $(M,J,p)$ be a projective almost complex 
structure. The curvature $F^{\con{p}}$ of the canonical Weyl connection 
is an invariant of the structure.  This two form is
 exact:  for any scale $\nabla\in p$ we
 have
$$
F^{\con{p}}= d A^{\nabla}.
$$
\end{theorem}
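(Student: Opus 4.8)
The plan is to reduce the statement to the projective transformation formulae already derived in this section. The first assertion, that $F^{\con{p}}$ is an invariant of $(M,J,p)$, requires essentially no work: by Proposition \ref{projJ} the connection $\con{p}$ is canonically determined by the data $(M,J,p)$ alone, hence so is the connection it induces on $E(1)$ and, therefore, its curvature $F^{\con{p}}$. (Equivalently, as noted in the surrounding discussion, $-(n+1)F^{\con{p}}$ is the trace $\stack{p}{R}_{ab}{}^c{}_c$ of the manifestly invariant curvature tensor $\stack{p}{R}$ of $\con{p}$.)

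For the exactness statement I would first fix a scale $\nabla\in p$; such a connection exists because $E(1)$ is a trivial line bundle, and by definition of a scale $F^\nabla=0$. Since $\nabla$ and $\con{p}$ both lie in $p$, they are projectively related as in \nn{ptrans}, equivalently as in \nn{ptrans2} on $T^*M$, for a unique $1$-form $\Upsilon$. Comparing the $T^*M$-form \nn{ptrans2} of this relation with the explicit formula \nn{Jcon} for $\con{p}$ (where $A^\nabla$ is as in \nn{A}), one reads off at once that $\Upsilon_a=-A^\nabla_a$; this identification is the only point where a small sign check is needed, and it is consistent with the transformation rule \nn{At} for $A$.

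It then remains only to substitute into the curvature transformation formula \nn{curvt}. Since $\con{p}$ is obtained from the scale $\nabla$ by the projective change with parameter $\Upsilon=-A^\nabla$, formula \nn{curvt} gives $F^{\con{p}}=F^\nabla-d\Upsilon=F^\nabla+dA^\nabla$, and with $F^\nabla=0$ this yields $F^{\con{p}}=dA^\nabla$, as required. I do not anticipate any genuine obstacle here: the argument is a short chain of substitutions into results proved earlier in the section, and the only thing demanding care is keeping the sign and index conventions consistent across \nn{ptrans}, \nn{ptrans2}, \nn{Jcon} and \nn{curvt}, so that the identification $\Upsilon=-A^\nabla$ and the final sign in $F^{\con{p}}=dA^\nabla$ come out correctly.
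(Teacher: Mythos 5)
Your argument is correct and is essentially the paper's own proof: the paper likewise picks a scale $\nabla\in p$ (so $F^\nabla=0$), identifies the projective change from $\nabla$ to $\con{p}$ via \nn{Jcon} as having parameter $\Upsilon=-A^\nabla$, and applies \nn{curvt}. Your sign check, including the consistency with \nn{At} (which forces $A^{\con{p}}=0$, i.e.\ compatibility), is accurate.
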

Of course we may obtain further invariants of $(M,J,p)$ by decomposing
$F^{\con{p}}$ into its Hermitian and anti-Hermitian parts:
$$
F_{\pm}(\cdot,\cdot):= F(\cdot,\cdot) \pm F(J\cdot,J\cdot).
$$

From Theorem \ref{pcon} we see that the situation is analogous to the
conformal almost Hermitian case. Note that the analogue of conformal
transformations are {\em special projective transformations},
i.e.\ those transformations \nn{ptrans} (equivalently \nn{ptrans2})
where $\Upsilon$ is required to be exact. With these observations in mind
we see 
there is a natural hierarchy of
curvature conditions that one can consider: \\
\begin{itemize}
\item $\con{p}$, and therefore also $F^{\con{p}}$, general;
\item  $F^{\con{p}}_+=0$ or $F^{\con{p}}_-=0$.
\item $F^{\con{p}}=0$; then
\begin{enumerate}
\item $0\neq [A^{\nabla}]\in H^1(M)$, where $\nabla$ is any scale; or  
\item $A^{\nabla}$ exact, so $\con{p}$ is a scale.
\end{enumerate} 
\end{itemize}
The last of these is a projective analogue of the conformally semi-K\"ahler
(co-symplectic) condition; in case that is
satisfied then $\con{p}$ is the unique preferred scale that is
compatible with $J$.

\section{Examples} \label{EXS}

\subsection{Example of an $(M,J,c)$ structure with nonvanishing $F=\der B$ satisfying Maxwell's equations}
We consider a 4-dimensional manifold $M$, which is a local product of a real line $\bbR$ and a 3-dimensional Lie group $G$, $M=\bbR\times G$. Let $\theta^1$, $\theta ^2$, $\theta^3$ be a basis of the left invariant forms on $G$. We have 
\be\begin{aligned}
&\der\theta^1=a_3\theta^1\dz\theta^2+a_2\theta^3\dz\theta^1+a_1\theta^2\dz\theta^3\\
&\der\theta^2=b_3\theta^1\dz\theta^2+b_2\theta^3\dz\theta^1+b_1\theta^2\dz\theta^3\\
&\der\theta^3=c_3\theta^1\dz\theta^2+c_2\theta^3\dz\theta^1+c_1\theta^2\dz\theta^3,\end{aligned}\label{bia}
\ee
where $a_i,b_i,c_i$ are real constants which satisfy relations implied by the Jacobi identity $\der^2\theta^i\equiv 0$. Simple solutions for these relations are:
$$\begin{aligned}
&a_3=\frac{a_2^2c_1-a_1b_2c_1-a_1a_2c_2+a_1b_1c_2}{a_2b_1-a_1b_2}\\
&b_3=\frac{a_2b_2c_1-b_1b_2c_1+b_1^2c_2-a_1b_2c_2}{a_2b_1-a_1b_2}\\
&c_3=\frac{-b_2c_1^2+a_2c_1c_2+b_1c_1c_2-a_1c_2^2}{a_2b_1-a_1b_2},
\end{aligned}
$$
and, in the following, we will consider equations (\ref{bia}) with $a_3$, $b_3$ and $c_3$ as above.

We equip $M$ with the canonical projection $\pi:M\to G$, introduce a coordinate $t$ along the $\bbR$ factor and consider a coframe $(\omega^1=\pi^*\theta^1,\omega^2=\pi^*\theta^2,\omega^3=\pi^*\theta^3,\omega^4=\der t)$ on $M$. Using this coframe we define a $(c,J)$ structure on $M$ by representing the conformal class $c$ via
$$c\quad\ni\quad g=(\omega^1)^2+(\omega^2)^2+(\omega^3)^2+(\omega^4)^2$$
and the almost complex structure $J$ via:
$$J=\omega^1\otimes e_3-\omega^3\otimes e_1+\omega^2\otimes e_4-\omega^4\otimes e_2.$$
Here $(e_1,e_2,e_3,e_4)$ is a frame of vector fields on $M$ which is dual to the coframe $(\omega^1,\omega^1,\omega^1,\omega^1)$, $e_i\hook \omega^j=\delta^j_i$.

Now we can find the canonical connection $\con{c}$ as in (\ref{Bcon}). In particular it is easy to get the explicit formula for the $B$-form defined in (\ref{Bform}). This reads:
$$
\begin{aligned}
B\quad=\quad &\frac{a_2b_2c_1-b_1b_2c_1+b_1^2c_2-a_1b_2c_2}{2(a_2b_1-a_1b_2)}\om^1+\frac{(b_1-a_2)(a_2c_1-a_1c_2)}{2(a_2b_1-a_1b_2)}\om^2-\\
&\tfrac12 b_1\om^3-\tfrac12 b_2\om^4\\
\end{aligned}
$$
Its exterior differential is
$$F=\der B=\frac{(a_2c_1-a_1c_2)(b_2c_1-b_1c_2)}{2(a_2b_1-a_1b_2)}\om^1\dz\om^2+\tfrac12 (a_1c_2-a_2c_1)\om^3\dz\om^4$$
showing, for example, that if $ (a_1c_2-a_2c_1)\neq 0$ the structure $(M,c,J)$ \emph{not} LCAK.  

The Hodge star of $F$ is:
$$*F=\frac{(a_2c_1-a_1c_2)(b_2c_1-b_1c_2)}{2(a_2b_1-a_1b_2)}\om^3\dz\om^4+\tfrac12 (a_1c_2-a_2c_1)\om^1\dz\om^2,$$
and
$$\begin{aligned}
\der *F=&\tfrac12(b_1-a_2)(a_2c_1-a_1c_2)\om^1\dz\om^2\dz\om^3+\\
&\frac{(a_2c_1-a_1c_2)(b_2c_1-b_1c_2)(-b_2c_1^2+a_2c_1c_2+b_1c_1c_2-a_1c_2^2)}{2(a_2b_1-a_1b_2)^2}\om^1\dz\om^2\dz\om^4-\\
&\frac{c_2(a_2c_1-a_1c_2)(b_2c_1-b_1c_2)}{2(a_2b_1-a_1b_2)}\om^1\dz\om^3\dz\om^4+\frac{c_1(a_2c_1-a_1c_2)(b_2c_1-b_1c_2)}{2(a_2b_1-a_1b_2)}\om^2\dz\om^3\dz\om^4.
\end{aligned}.$$
Thus, the structure satisfies Maxwell's equations $\der *F=0$, e.g. when:
$$b_1=a_2,\quad b_2=s a_2 \quad c_2=s c_1,\quad {\rm and}\quad s={\rm const}. $$

\subsection{$(M,p,J)$ in dimension 2 with $G_-(X,X)=0$}

Recall that an oriented 2-dimensional conformal manifold $(M,c)$ is
the same as a complex 2-manifold $(M,J)$, and indeed the same as a
conformal Hermitian manifold $(M,J,c)$.
\begin{theorem}\label{twothm}
Let $(M,p)$ be a projective structure on a 2-dimensional manifold
$M$. Assume that $(M,p)$ is compatible with the natural complex
structure $J$ on $M$ in the sense of Definition \ref{PJcomp}. Then
$\G{p}\equiv 0$. Furthermore the torsion free connection $\con{p}$
preserves the
canonical conformal class $c$ in $M$, that is $\con{p}_ig_{ij}=2
B_ig_{ij}$ for any representative $g$ of $c$.
\end{theorem}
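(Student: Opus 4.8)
The plan is to deduce $\G{p}\equiv 0$ from the facts that the canonical connection $\con{p}$ is compatible with $J$ and that the hypothesis of Definition~\ref{PJcomp} reads $\G{p}_-^{\rm symm}=0$, and then to obtain the conformal statement almost formally. By Proposition~\ref{projJ} we have $\con{p}_aJ^a{}_b=0$, so Lemma~\ref{compl} gives $\G{p}^b{}_{cb}=0$, while Lemma~\ref{Gprop} gives complex anti-linearity in the first argument, $\G{p}(JX,Y)=-J\G{p}(X,Y)$. The key step is that in real dimension two these two properties already force $\G{p}$ to be symmetric and anti-Hermitian, i.e.\ $\G{p}=\G{p}_-^{\rm symm}$. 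I would verify this in a local frame $(e_1,e_2)$ with $e_2=Je_1$: writing $\G{p}(e_1,e_1)=\alpha e_1+\beta e_2$ and $\G{p}(e_1,e_2)=\gamma e_1+\delta e_2$ and using anti-linearity in the first slot to compute $\G{p}(e_2,\cdot)$, the vanishing of the trace $\G{p}^b{}_{cb}$ (the form of compatibility isolated in the proof of Lemma~\ref{compl}), evaluated on $Y=e_1$ and $Y=e_2$, forces $\delta=-\alpha$ and $\gamma=\beta$; substituting back one reads off $\G{p}(e_1,e_2)=\G{p}(e_2,e_1)$ and $\G{p}(Je_1,Je_1)=-\G{p}(e_1,e_1)$ directly, i.e.\ $\G{p}^{\rm skew}=0$ and $\G{p}_+=0$. (Consistently, $\G{p}_-^{\rm skew}$ vanishes in any case, since it equals $N_J$ by Proposition~\ref{generp} and $N_J\equiv 0$ in dimension two.) Since the $J,p$-compatibility hypothesis is exactly $\G{p}_-^{\rm symm}=0$, we conclude $\G{p}\equiv 0$.

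For the conformal statement, note that $\G{p}=0$ is equivalent to $\con{p}J=0$ (apply $J$ to the defining relation for $\G{p}$; cf.\ \nn{JG}), so $\con{p}$ is a torsion-free connection preserving $J$. Fix any $g\in c$, so that $J$ is $g$-orthogonal. For a vector field $Z$ I would compute
\[
(\con{p}_Zg)(JX,JY)=Z\big(g(JX,JY)\big)-g(\con{p}_Z(JX),JY)-g(JX,\con{p}_Z(JY))
\]
and use $\con{p}_Z(JX)=J\con{p}_ZX$ together with the $g$-orthogonality of $J$ twice to obtain $(\con{p}_Zg)(JX,JY)=(\con{p}_Zg)(X,Y)$. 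Thus $\con{p}_Zg$ is a symmetric, $J$-invariant $2$-tensor; in real dimension two every such tensor is a pointwise multiple of $g$ (in a frame $e_1,Je_1$ both $g$ and any symmetric $J$-invariant $h$ have the form $c\,(e^1\otimes e^1+e^2\otimes e^2)$, since $h(Je_1,Je_1)=h(e_1,e_1)$ and $h(e_1,Je_1)=-h(e_1,Je_1)$). Hence $\con{p}_Zg=\lambda(Z)\,g$ for a smooth $1$-form $\lambda$; putting $B:=\tfrac12\lambda$ gives $\con{p}_ig_{jk}=2B_ig_{jk}$, and a rescaling $\widehat g=e^{2\phi}g$ yields $\con{p}_i\widehat g_{jk}=2(B_i+\partial_i\phi)\widehat g_{jk}$, so the condition holds for every representative of $c$.

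The main obstacle is the middle step of the first part: showing that compatibility of $\con{p}$ with $J$, together with complex anti-linearity in the first argument, collapses $\G{p}$ onto $\G{p}_-^{\rm symm}$ in real dimension two. This is a short linear-algebra computation, but it carries essentially all the content of $\G{p}\equiv 0$; it can equivalently be phrased as a dimension count, since the space of totally trace-free tensors anti-linear in the first slot and the space of $\G{p}_-^{\rm symm}$-type tensors both have real dimension two (each being determined by a single value in $T_pM$), with the latter visibly contained in the former. Everything else — the appeal to Proposition~\ref{projJ} and Definition~\ref{PJcomp}, the identity $\G{p}=0\Leftrightarrow\con{p}J=0$, and the conformal rescaling — is routine.
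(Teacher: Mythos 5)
Your proof is correct, but it takes a genuinely different route from the paper's. The paper proves the theorem by brute force in a local coframe $(\theta^1,\theta^2)$: it writes down the most general torsion-free connection via its connection forms $\Gamma^i{}_{j}$, computes the compatible representative $\gac{p}^i{}_{j}$ explicitly, observes that the condition $\G{p}_-(X,X)\equiv 0$ is equivalent to two algebraic relations among the frame functions ($c=a+\beta-2p$, $f=-\alpha-2b+q$) which in turn force $\G{p}\equiv 0$, and then verifies $\con{p}_ig_{jk}=2B_ig_{jk}$ by direct calculation, obtaining an explicit formula for $B$. You instead isolate the invariant mechanism: the two structural facts already established in Section 2 --- complex anti-linearity of $\G{p}$ in its first slot (Lemma \ref{Gprop}) and the trace condition $\G{p}^b{}_{cb}=0$ coming from the compatibility of $\con{p}$ guaranteed by Proposition \ref{projJ} (via Lemma \ref{compl}) --- cut the four-dimensional space of anti-linear $(1,2)$-tensors down to the two-dimensional space of symmetric anti-Hermitian ones, so that $\G{p}=\G{p}_-^{\rm symm}$ identically in dimension two and the hypothesis of Definition \ref{PJcomp} kills everything. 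Your frame computation of this step checks out ($\delta=-\alpha$, $\gamma=\beta$ do yield $\G{p}^{\rm skew}=0$ and $\G{p}_+=0$), and your derivation of the conformal statement from $\con{p}J=0$ plus the one-dimensionality of $J$-invariant symmetric $2$-tensors in dimension two is clean and complete, including the check that the property persists under rescaling of $g$. What the paper's approach buys is the explicit formula for the Weyl potential $B$ in terms of the structure functions, which is useful for the examples section; what yours buys is transparency about \emph{why} the result holds and independence from any particular frame normalisation.
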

\begin{proof}
In the following we will represent connections $\nabla$ by the
connection 1-forms $\Gamma^i_{~j}$ in a frame.

Working locally we start with a coframe $(\theta^1,\theta^2)$ on $M$,
which may be defined by means of its structure equations: \be
\der\theta^1=\alpha\theta^1\dz\theta^2,\quad\quad\der\theta^2=\beta\theta^1\dz\theta^2,\label{stre}\ee
with functions $a$ and $b$ on $M$, such that $\der^2\equiv 0$. Then we
consider connection one forms $\Gamma^i_{~j}$ defined by the equation
\be \der\theta^i+\Gamma^i_{~j}\dz\theta^j=0.\label{tof}\ee This
equation is stating that the corresponding connection $\nabla$ is
torsion free.  The relation between the 1-forms $\Gamma^i_{~j}$ and the
connection $\nabla$ is given by
$$ \Gamma^i_{~j}=\Gamma^i_{~jk}\theta^k, \quad\quad \Gamma^i_{~jk}=\theta^i(\nabla_{e_k}e_j),$$
where $(e_1,e_2)$ is the frame of vector fields on $M$ such that 
$\theta^j(e_i)=\delta^j_{~i}$. 

Given (\ref{stre}), the most general solution to the equations (\ref{tof}) is:
\be\begin{aligned}
&\Gamma^1_{~1}=a\theta^1+(\alpha+b)\theta^2\\
&\Gamma^1_{~2}=b\theta^1+c\theta^2\\
&\Gamma^2_{~1}=f\theta^1+p\theta^2\\
&\Gamma^2_{~2}=(p-\beta)\theta^1+q\theta^2
\end{aligned}
\label{cop}\ee
with arbitrary functions $a,b,c,f,p,q$ on $M$. At a given point of $M$ the values of these functions parametrise the space of torsion free connections. They define projective structures on $M$ by the following:

The projective transformations (\ref{ptrans}) rewritten in the coframe $(\theta^1,\theta^2)$ mean that the connection 1-forms $\Gamma^i_{~j}$ are in the same projective class as 
$$\hat\Gamma\phantom{}^i_{~j}=\Gamma^i_{~j}+\delta^i_{~j}\Upsilon+\Upsilon_j\theta^i,$$
where $\Upsilon$ is a 1-form, $\Upsilon=\Upsilon_1\theta^1+\Upsilon_2\theta^2$.

In two dimensions locally, and up to diffeomorphism, there are only
\emph{two} complex structures, differing by a sign. In the coframe
$(\theta^1,\theta^2)$ these can be written as:
$$J=\epsilon (\theta^1\otimes e_2-\theta^2\otimes e_1),$$
where $\epsilon=\pm 1$.  So given a projective structure $p$, represented by $\nabla$ as above (or in our frame by $\Gamma^i_{~j}$ as in (\ref{cop})), then 
 $J$ determines  the unique connection $\con{p}$ in the projective class. We will write this connection in our coframe in terms of the corresponding connection 1-forms $\gac{p}^i_{~j}$. These read:
\be\begin{aligned}
&\gac{p}^1_{~1}=(-\beta-c)\theta^1+\tfrac12(3\alpha+2b-f-q)\theta^2\\
&\gac{p}^1_{~2}=\tfrac12(\alpha+2b-f-q)\theta^1+c\theta^2\\
&\gac{p}^2_{~1}=f\theta^1+\tfrac12(-a-\beta-c+2p)\theta^2\\
&\gac{p}^2_{~2}=\tfrac12(-a-3\beta-c+2p)\theta^1+(\alpha-f)\theta^2,
\end{aligned}
\label{copg}\ee
with the $A$ as in (\ref{A}) given by
$$A=-\tfrac12(a+\beta+c)\theta^1+\tfrac12(\alpha-f-q)\theta^2.$$

Now we restrict to the situation of a projective class of connections
$\Gamma^i_{~j}$ which are compatible with $J$ in the
sense of Definition \ref{PJcomp}. This means that we impose the
restriction $\G{p}_-(X,X)\equiv 0$. It follows that, in two
dimensions, this condition is equivalent to $\G{p}\equiv 0$,
$${\rm in~two~dimensions}:\quad \G{p}_-(X,X)\equiv 0\quad \Longleftrightarrow \quad\G{p}\equiv 0.$$
Actually the $\G{p}_-(X,X)\equiv 0$ condition, in terms of the functions 
$a,b,c,f,p,q$ defined above, is equivalent to
$$c=a+\beta-2p,\quad\quad f=-\alpha-2b+q.$$
Then the connection $\con{c}$ is represented by
\be\begin{aligned}
&\gac{p}^1_{~1}=(-a-2\beta+2p)\theta^1+(2\alpha+2b-q)\theta^2\\
&\gac{p}^1_{~2}=(\alpha+2b-q)\theta^1+(a+\beta-2p)\theta^2\\
&\gac{p}^2_{~1}=-(\alpha+2b-q)\theta^1-(a+\beta-2p)\theta^2\\
&\gac{p}^2_{~2}=(-a-2\beta+2p)\theta^1+(2\alpha+2b-q)\theta^2,
\end{aligned}
\label{copgm}\ee 
and $A$ is given by
$$A=(-a-\beta+p)\theta^1+(\alpha+b-q)\theta^2.$$
Now, since $\G{p}\equiv 0$, by the general theory, the connection $\con{p}$ not only is compatible with $J$, but it actually preserves $J$. Since $J$ defines $c$, it must conformally preserve the metric $$g=g_{ij}\theta^i\theta^j=(\theta^1)^2+(\theta^2)^2,$$
representing the conformal class on $M$. 
Indeed, a short calculation shows that
$$\con{p}_ig_{jk}=2B_ig_{jk}$$
with
$$B=B_i\theta^i=(a+2\beta-2p)\theta^1+(-2\alpha-2b+q)\theta^2.$$
This finishes the proof.\end{proof}

\section{Acknowledgements}

ARG wishes to express appreciation for the hospitality of Institute of
Theoretical Physics, University of Warsaw. Similarly PN wishes to
express appreciation for the hospitality of the University of Auckland
during the preparation of this work. 

Both authors thank the organisers of the 2012 BIRS Workshop on
conformal and CR geometry, during which the article was finalised. We
also thank Paul-Andi Nagy for helpful comments during this period.

\end{document}